\newcommand{\Lip}{\operatorname{Lip}}
\renewcommand{\div}{\operatorname{div}}
\newcommand{\law}{\operatorname{law}}
\DeclareMathOperator*{\esssup}{ess\,sup}
\newcommand{\hook}{\hookrightarrow}
\newcommand{\harp}{\rightharpoonup}
\newcommand{\harpstar}{\overset{\ast}{\harp}}
\newcommand{\grad}{\nabla}
\newcommand{\laplace}{\Delta}
\newcommand{\Borel}{\operatorname{Borel}}
\newcommand{\Ac}{\mathcal{A}}
\newcommand{\Apr}{A_{\rm{pr}}}
\newcommand{\Eb}{\mathbb{E}}
\newcommand{\Fc}{\mathcal{F}}
\newcommand{\Fb}{\mathbb{F}}
\newcommand{\Gc}{\mathcal{G}}
\newcommand{\Hbar}{\bar{H}}
\newcommand{\Jc}{\mathcal{J}}
\newcommand{\Mc}{\mathcal{M}}
\newcommand{\Nb}{\mathbb{N}}
\newcommand{\Pb}{\mathbb{P}}
\newcommand{\Rb}{\mathbb{R}}
\newcommand{\Rc}{\mathcal{R}}
\renewcommand{\Re}{R^\varepsilon}
\newcommand{\Tc}{\mathcal{T}}
\newcommand{\Uc}{\mathcal{U}}
\newcommand{\Ut}{\tilde{U}}
\newcommand{\Uh}{\hat{U}}
\newcommand{\Ue}{U^\varepsilon}
\newcommand{\Uhe}{U_{h_\varepsilon}}
\newcommand{\upe}{\upnu^\varepsilon}
\newcommand{\upeb}{\bar{\upnu}^\varepsilon}
\newcommand{\upet}{\tilde{\upnu}^\varepsilon}
\newcommand{\Upe}{\Upsilon^\varepsilon}
\newcommand{\Vc}{\mathcal{V}}
\newcommand{\vb}{\bar{v}}
\newcommand{\vt}{\tilde{v}}
\newcommand{\Ye}{Y^\varepsilon}
\newcommand{\Ho}{\overline{H}}
\newcommand{\Vo}{\overline{V}}
\providecommand{\keywords}[1]
{
	{\small
  	\textbf{\textit{Keywords---}} #1
  	}
}
\providecommand{\msc}[1]
{
	{\small
  	\textbf{\textit{MSC2010---}} #1
  	}
}
\newtheorem{theorem}{Theorem}[section]
\newtheorem{lemma}[theorem]{Lemma}
\newtheorem{proposition}[theorem]{Proposition}
\newtheorem{definition}[theorem]{Definition}
\theoremstyle{definition}
\theoremstyle{remark}
\numberwithin{equation}{section}
\title{Large and moderate deviations principles and central limit theorem for the stochastic 3D primitive equations with gradient dependent noise}
\author{Jakub Slav\'{i}k\footnote{email: \href{mailto:slavik@utia.cas.cz}{slavik@utia.cas.cz}, ORCID: \href{https://orcid.org/0000-0001-6465-663X}{0000-0001-6465-663X}}}
\affil{The Czech Academy of Sciences, Institute of Information Theory and Automation\protect\\Pod~Vod\'{a}renskou v\v{e}\v{z}\'{i} 4, 182 00 Prague 8, Czech Republic}
\date{}
\begin{document}

\maketitle

\begin{abstract}
We establish the large deviations principle (LDP) and the moderate deviations principle (MDP) and an almost sure version of the central limit theorem (CLT) for the stochastic 3D viscous primitive equations driven by a multiplicative white noise allowing dependence on spatial gradient of solutions with initial data in $H^2$. The LDP is established using the weak convergence approach of Budjihara and Dupuis and uniform version of the stochastic Gronwall lemma. The result corrects a minor technical issue in  Z.~Dong, J.~Zhai, and R.~Zhang: Large deviations principles for 3D stochastic primitive equations, J.~Differential Equations, 263(5):3110–3146, 2017, and establishes the result for a more general noise. The MDP is established using a similar argument.
\end{abstract}

\keywords{large deviations principle, moderate deviations principle, primitive equations, weak convergence approach.}

\msc{60H15, 60F10, 35Q86}

\tableofcontents

\section{Introduction}

The primitive equations are one of the fundamental models in geophysical fluid dynamics, see e.g.\ \cite{pedlosky,vallis} and the references therein. They can be derived from the Navier-Stokes equations using the Boussinesq approximation and the hydrostatic balance. The main aim of this paper is to study limiting small noise behaviour of solutions of the stochastic 3D primitive equations driven by a multiplicative white noise. In particular, we establish the large deviations principle, the moderate deviations principle and an almost sure version of the central limit theorem.

The systematic study of the deterministic primitive equations began in the 1990s in a series of papers \cite{lions1992b,lions1992a,lions1993}. Local existence of strong solutions has been established in \cite{guillengonzalez2001,hu2003}, global existence of strong solutions and uniqueness in $L^2$ has been shown in \cite{cao2007,kobelkov2007,kukavica2007} using different methods. More recent results include well-posedness in $L^p$ spaces \cite{hieber2016} and well-posedness of primitive equations with partial viscosity and/or diffusivity, see \cite{li2018,hussein2019} and the references therein.

In the stochastic setting, the 3D primitive equations driven by additive noise have been studied in \cite{guo2009}. Local and global existence of strong solutions (both in the stochastic and PDE sense) has been established in \cite{debussche2011,debussche2012}. Existence and regularity of invariant measures have been shown in \cite{glatt-holtz2014}. Time discretization of weak solutions (both in stochastic and PDE sense) has been treated in \cite{glatt-holtz2017}. In this paper, we use the existence theorem from \cite{brzezniakslavik} which allows the noise term $\sigma$ to depend on the spatial gradient of the velocity and temperature. Such noise term are physically reasonable, see e.g.\ \cite{mikulevicius2001,mikulevicius2004} and the references therein.

The stochastic 3D primitive equations driven by a scaled multiplicative noise can be written in an abstract form as 
\begin{equation}
	\label{eq:intro.pe.eps}
	d\Ue + \left[ A\Ue + B\left( \Ue \right) + \Apr \Ue + E\Ue + F_U \right] \, dt = \sqrt{\varepsilon} \sigma\left( \Ue \right) \, dW, \quad \Ue(0) = u_0,
\end{equation}
with $\Ue = (v^\varepsilon, T^\varepsilon)$, where $v^\varepsilon$ is the horizontal velocity and $T^\varepsilon$ is the temperature. A full form of equation \eqref{eq:intro.pe.eps} can be found in Section \ref{sect:prelim} together with the definitions of the operators and function spaces used here. The notation and the full form of the equations can be found in Section \ref{sect:prelim}. We will be studying the convergence of $\Ue$ to the solution of the deterministic equation
\begin{equation}
	\label{eq:intro.pe.zero}
	dU^0 + \left[ AU^0 + B\left( U^0 \right) + \Apr U^0 + EU^0 + F_U \right] \, dt = 0, \qquad U^0(0) = u_0,
\end{equation}
in various scales. Let $\lambda(\varepsilon)$ be a certain deviations scale and let
\[
	\Re = \frac{U^\varepsilon - U^0}{\sqrt{\varepsilon} \lambda(\varepsilon)}.
\]

If $\lambda(\varepsilon) = 1/\sqrt{\varepsilon}$, the asymptotic behaviour of $\Re$ as $\varepsilon \to 0+$ is known as the large deviations principle (LDP), see \cite{varadhan,freidlin,liu2010,budhiraja2008} and the references therein. For stochastic primitive equations, the LDP has established in \cite{gao2012} in 2D and in \cite{dong2017} in 3D. In the last mentioned reference the LDP result is obtained in the setting of \cite{debussche2012}, in particular with noise term that does not allow dependence on gradients. Also, the proof of the main result in \cite{dong2017} has certain minor technical issues which we address in Section \ref{sect:ldp.proof}. Some of related results include the LDP for the stochastic 2D Navier-Stokes equations \cite{chueshov2010} and the LDP for stochastic 2D quasi-geostrophic equations \cite{liu2013}. 

Let us formulate the first main results of this paper. Let $t > 0$ be fixed. Let $(\Omega, \Fc, \Fb, \Pb)$, $\Fb = (\Fc_t)_{t \geq 0}$, be a fixed stochastic basis satisfying the usual conditions and let $W$ be a cylindrical $\Fb$-Wiener process. The following theorem will be proved in Section \ref{sect:ldp.proof}.

\begin{theorem}[LDP]
\label{thm:ldp}
Let $\sigma$ satisfy the assumptions (\ref{eq:sigma.bnd.H}-\ref{eq:sigma.vz}) and let $F_U$ satisfy \eqref{eq:f.source}. Moreover, let $\sigma$ be such that the solution of the equation
\begin{equation}
	\label{eq:ldp.skeleton.regularity}
	dU_h + \left[ AU_h + B(U_h) + F(U_h) \right] \, dt = \sigma(U_h) h \, dt, \qquad U_h(0) = u_0,
\end{equation}
satisfies $U_H \in L^\infty(0, t; H^2)$ for all $t>0$, $u_0 \in V \cap H^2$ and $h \in L^2(0, t; \Uc)$. Then there exists $\varepsilon_0 > 0$ such that for all $u_0 \in V$ the solutions $\lbrace U^\varepsilon \rbrace_{\varepsilon \in (0, \varepsilon_0]}$ of \eqref{eq:intro.pe.eps} satisfy the LDP with a good rate function $I$ given by \eqref{eq:ldp.rate.actual}.
\end{theorem}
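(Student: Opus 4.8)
The plan is to apply the weak convergence / variational approach of Budhiraja and Dupuis, which reduces the LDP to verifying two conditions: (i) for any family $\{h_\varepsilon\}$ of controls bounded in $L^2(0,t;\Uc)$ (i.e., taking values in a fixed ball $S_N$ of that space, equipped with the weak topology), the solutions $\Uhe$ of the controlled (stochastic) equation converge in distribution, as $\varepsilon \to 0+$, to the solution $U_h$ of the skeleton equation \eqref{eq:ldp.skeleton.regularity} where $h$ is the weak limit of $h_\varepsilon$; and (ii) the map $h \mapsto U_h$ from $S_N$ (weak topology) to the path space $C(0,t;V) \cap L^2(0,t;H^2)$ (or whatever the relevant solution space is) is continuous, which yields the compactness of sublevel sets and hence that the rate function $I$ given by \eqref{eq:ldp.rate.actual} is good. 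The existence and $H^2$-regularity of $\Uhe$ and $U_h$ is furnished by the hypotheses of the theorem together with the existence theory of \cite{brzezniakslavik}; the main analytic work is in the a priori estimates that make the two limits pass.

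First I would set up the controlled equation obtained from \eqref{eq:intro.pe.eps} by a Girsanov shift, $d\Uhe + [A\Uhe + B(\Uhe) + \Apr\Uhe + E\Uhe + F_U]\,dt = \sqrt{\varepsilon}\,\sigma(\Uhe)\,dW + \sigma(\Uhe)h_\varepsilon\,dt$, and record uniform-in-$\varepsilon$ bounds: an energy estimate in $H$, then a $V$-estimate, and finally an $H^2$-estimate, each obtained by testing with the appropriate power of the operator $A$, using the structural cancellation properties of $B$ for the primitive equations, the assumptions (\ref{eq:sigma.bnd.H}--\ref{eq:sigma.vz}) on $\sigma$ (boundedness and Lipschitz-type control, including the gradient-dependent part, which forces the diffusion/viscosity terms to absorb the extra derivative), the bound on the source $F_U$ from \eqref{eq:f.source}, and the constraint $\|h_\varepsilon\|_{L^2(0,t;\Uc)} \le N$. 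Because the noise depends on the gradient, these estimates are not closed by Gronwall directly: the key device, advertised in the abstract, is a \emph{uniform version of the stochastic Gronwall lemma}, which lets one pass from a differential inequality with stochastic-integral remainder terms to a bound that is uniform over the family $\{h_\varepsilon\}$ and over a suitable stopping-time localization; this is exactly the place where the technical gap in \cite{dong2017} is repaired. From these bounds I would extract tightness of the laws of $(\Uhe, W, h_\varepsilon)$ on the product of the path space (with a weak-or-interpolated topology making the nonlinearity continuous), the Wiener space, and $S_N$ weak, invoke Prokhorov and Skorokhod representation to get a.s.-convergent versions on a new probability space, and identify the limit: the martingale term vanishes as $\sqrt{\varepsilon} \to 0$, the control term $\sigma(\Uhe)h_\varepsilon$ converges to $\sigma(U_h)h$ by combining strong convergence of $\Uhe$ with weak convergence of $h_\varepsilon$, and uniqueness of the skeleton equation (again from the theorem's regularity hypothesis plus a standard energy/Gronwall argument for differences) upgrades convergence in distribution to the required statement. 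Condition (ii) is proved by the same compactness-and-identification scheme applied to a sequence $h_n \harp h$ in $S_N$, now with $\varepsilon$ absent.

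The main obstacle I expect is closing the $H^2$ a priori estimate uniformly in $\varepsilon$ and in the control $h_\varepsilon$ in the presence of gradient-dependent noise: naive Itô/energy estimates produce a term of the form (second derivative of solution) paired against (first derivative of solution times derivative of $\sigma$), which is only borderline controllable by the viscous dissipation and which, when handled stochastically, leaves a martingale remainder whose supremum must be dominated \emph{uniformly} over the ball $S_N$ — precisely what the uniform stochastic Gronwall lemma is designed to deliver, and precisely the point glossed over in \cite{dong2017}. A secondary difficulty is purely topological: the nonlinear term $B$ is not continuous on $C(0,t;H)$ with the strong topology along merely bounded sequences, so one must choose the path-space topology (e.g.\ $C(0,t;H_w) \cap L^2(0,t;V)$ with an interpolation gain, or a fractional-in-time compactness à la Aubin–Lions–Simon) carefully enough that $B(\Uhe) \to B(U_h)$ passes to the limit while still being strong enough to identify $\sigma(\Uhe)h_\varepsilon \to \sigma(U_h)h$; the $H^2$ bound obtained above is what buys the needed compactness. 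Once both conditions are verified, the Budhiraja–Dupuis criterion yields the LDP with good rate function $I$ as in \eqref{eq:ldp.rate.actual}, with $\varepsilon_0$ determined by the threshold below which all the above estimates, in particular the stochastic Gronwall step, are valid.
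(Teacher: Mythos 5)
Your overall framework (Budhiraja--Dupuis, two conditions, Girsanov-shifted controlled equation, skeleton equation) matches the paper, but two concrete points in your route are problematic. First, you plan to close a uniform-in-$\varepsilon$, uniform-in-$h_\varepsilon$ \emph{$H^2$ a priori estimate for the controlled stochastic solutions} and you treat this as the main analytic obstacle. Under the stated hypotheses this is neither available nor needed: the gradient-dependent noise is only assumed to map into $L_2(\Uc,V)$ with the defect term $\eta_1\vert AU\vert^2$ in \eqref{eq:sigma.bnd.V}, so an It\^{o} estimate at the $D(A)$ level (testing with $A^2$) has no hypothesis to control $\Vert\sigma(U)\Vert_{L_2(\Uc,D(A))}$. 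The theorem's $L^\infty(0,t;H^2)$ assumption concerns only the \emph{deterministic} skeleton solution $U_h$, and its sole role is in the compactness step (Proposition \ref{prop:skeleton.convergence}): it is what lets one bound the term $\bigl( A^{1/2}\sigma(U_h)[h_n-h], A^{1/2}w^n \bigr)$ when proving strong convergence $U_{h_n}\to U_h$ in $C([0,t],V)\cap L^2(0,t;D(A))$. The stochastic solutions are only controlled in $V$ and $L^2(0,t;D(A))$ through stopping times (Proposition \ref{prop:ldp.V.estimate}) combined with the uniform stochastic Gronwall lemma (Proposition \ref{prop:stoch.Gronwall}); your plan as written asks for more regularity than the assumptions can deliver, and in a place where the paper deliberately avoids needing it.

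Second, your verification of condition (i) by ``tightness of the laws of $(\Uhe,W,h_\varepsilon)$ + Prokhorov + Skorokhod on the solution process, then identification'' glosses over exactly the difficulty this paper was written to repair. After a Skorokhod representation one must argue that the transferred processes still solve the controlled SPDE on the new probability space; since the solutions are only known to be locally (up to stopping times) square integrable in $\omega$, one cannot invoke the usual $L^2(\Omega)$-based arguments, and re-deriving existence on the new space via the barotropic/baroclinic decomposition would require regularity of the auxiliary process that is not available --- this is the second gap in \cite{dong2017} identified in Section \ref{sect:ldp.proof}. The paper sidesteps tightness of the nonlinear solutions entirely: it writes $Y^\varepsilon = Z^\varepsilon + R^\varepsilon$, where $Z^\varepsilon$ solves the \emph{linear} equation $dZ^\varepsilon + AZ^\varepsilon\,dt=\sqrt{\varepsilon}\sigma(Y^\varepsilon)\,dW$ and is shown to vanish in probability in $C([0,t],V)\cap L^2(0,t;D(A))$, applies Skorokhod only to $(h_\varepsilon,Z^\varepsilon,W)$ (no tightness issue, since $\Tc_M$ is compact metrizable and $Z^\varepsilon\to 0$ in probability), transfers only the linear $Z$-equation by a stopped Bensoussan trick \cite{bensoussan1995} with stopping times justified by the debut theorem, and then treats the equation for $\tilde{R}^\varepsilon=\tilde{Y}^\varepsilon-\tilde{Z}^\varepsilon$ pathwise, reusing the deterministic skeleton-convergence argument. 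Likewise, the convergence of the martingale/auxiliary part is proved only in probability, not in $L^2(\Omega)$ --- the first gap in \cite{dong2017}. Without this (or an equivalent substitute, e.g.\ a full martingale-problem identification plus an Aubin--Lions tightness argument that you would have to carry out with only stopped moment bounds), your outline does not yet constitute a proof.
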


The additional regularity of solutions of the skeleton equation \eqref{eq:ldp.skeleton.regularity} will be used in a compactness argument, see the proof of Proposition \ref{prop:skeleton.convergence}. Additional assumptions on $\sigma$ that guarantee the desired regularity of solutions of the skeleton equation will be established in Section \ref{sect:ldp.skeleton}.

If $\lambda(\varepsilon) \to \infty$ and $\sqrt{\varepsilon}\lambda(\varepsilon) \to 0$, the asymptotic behaviour of $\Re$ as $\varepsilon \to 0+$ is known as the moderate deviations principle (MDP). The MDP for the stochastic 2D Navier-Stokes equations has been established in \cite{wang2015}. Recently, the MDP for weak solutions (in the PDE sense) of the stochastic 2D primitive equations has been proved in \cite{zhang2019}. The difficulty in obtaining the same result in 3D and for strong solutions lies in more delicate estimates of the non-linear part for which we also require higher regularity of the solution of the deterministic equation \eqref{eq:intro.pe.zero}. The following theorem will be proved in Section \ref{sect:mdp.proof}.

\begin{theorem}[MDP]
\label{thm:mdp}
Let $\sigma$ satisfy the assumptions (\ref{eq:sigma.bnd.H}-\ref{eq:sigma.vz}) and let $F_U$ satisfy \eqref{eq:f.source}. Then for all $u_0 \in V \cap H^2$ there exists $\varepsilon_0 > 0$ such that the solutions $\lbrace U^\varepsilon \rbrace_{\varepsilon \in (0, \varepsilon_0]}$ of the stochastic 3D periodic equations \eqref{eq:intro.pe.eps} satisfy the MDP with a good rate function given by \eqref{eq:mdp.rate}.
\end{theorem}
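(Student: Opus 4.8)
The plan is to follow the weak convergence approach of Budhiraja--Dupuis, exactly as for the LDP in Theorem \ref{thm:ldp}, but adapted to the moderate scaling. Setting $\Re = (U^\varepsilon - U^0)/(\sqrt\varepsilon\,\lambda(\varepsilon))$, one subtracts \eqref{eq:intro.pe.zero} from \eqref{eq:intro.pe.eps} and divides by $\sqrt\varepsilon\,\lambda(\varepsilon)$; the nonlinearity produces, after writing $B(U^\varepsilon) - B(U^0) = B(U^\varepsilon, U^\varepsilon - U^0) + B(U^\varepsilon - U^0, U^0)$, a linearized drift around $U^0$ plus a genuinely quadratic term carrying a prefactor $\sqrt\varepsilon\,\lambda(\varepsilon) \to 0$. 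The candidate rate function is the one associated with the linear skeleton equation
\begin{equation*}
	dZ_h + \left[ AZ_h + B(U^0, Z_h) + B(Z_h, U^0) + \Apr Z_h + EZ_h \right] dt = \sigma(U^0) h \, dt, \qquad Z_h(0) = 0,
\end{equation*}
namely $\tilde I(z) = \tfrac12 \inf\{ |h|_{L^2(0,t;\Uc)}^2 : Z_h = z \}$, which is \eqref{eq:mdp.rate}. The two things to verify in the Budhiraja--Dupuis criterion are: (i) convergence of the controlled equations, i.e. if $h_\varepsilon \harp h$ weakly in $L^2(0,t;\Uc)$ (along a Girsanov-shifted subsequence, with the uniform $L^2$-bound on the controls), then the solution $\Re_{h_\varepsilon}$ of the perturbed controlled equation converges in the path space to $Z_h$; and (ii) compactness of the level sets of $\tilde I$, which reduces to the statement that $h \mapsto Z_h$ is continuous from balls of $L^2(0,t;\Uc)$ with the weak topology into the path space — this is the same argument as (i) with $\varepsilon = 0$.

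For step (i) I would first derive uniform-in-$\varepsilon$ a priori bounds for $\Re_{h_\varepsilon}$ in $C([0,t];H) \cap L^2(0,t;V)$ by testing the equation against $\Re_{h_\varepsilon}$: the linearized terms $B(U^0, \Re_{h_\varepsilon})$ and $B(\Re_{h_\varepsilon}, U^0)$ are controlled using the higher regularity $U^0 \in L^\infty(0,t;H^2)$ (which holds since $U^0 = U_0$ is the solution of \eqref{eq:ldp.skeleton.regularity} with $h = 0$, and the regularity hypothesis there gives $U^0 \in L^\infty(0,t;H^2)$ for $u_0 \in V \cap H^2$), the quadratic term is absorbed thanks to its vanishing prefactor together with a stochastic Gronwall argument — here the uniform stochastic Gronwall lemma used for the LDP is again the right tool to handle the exponential moment bounds arising from the $B(\Re,\Re)$ term under the energy estimate. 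With these bounds in hand, one extracts weak/weak-$*$ limits, identifies the limit of the linear terms by weak continuity, shows the quadratic remainder tends to zero in the relevant negative-order space (prefactor times a bounded quantity), and uses a standard compactness argument (Aubin--Lions on the deterministic skeleton side, or tightness plus Skorokhod on the stochastic side) to pass to the limit in the nonlinear-in-$h$ term $\sigma(U^0_{h_\varepsilon}) h_\varepsilon$, using the continuity of $\sigma$ and the strong convergence of the state together with the weak convergence of $h_\varepsilon$. Uniqueness of $Z_h$ then pins down the whole limit.

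The main obstacle I anticipate is the same one flagged in the introduction for the 3D case: closing the energy estimates for the nonlinear term without losing control, which forces the use of the $H^2$-regularity of $U^0$ and a careful uniform stochastic Gronwall argument rather than a naive pathwise Gronwall. Concretely, in the term $\langle B(\Re_{h_\varepsilon}, U^0), \Re_{h_\varepsilon}\rangle$ one must trade the missing derivative onto $U^0$; the bound $|B(w, U^0)|_{V'} \lesssim |w|_H |U^0|_{H^2}$ (or an interpolation variant) is what keeps the estimate in $H$-$V$ and makes the Gronwall constant integrable in time. Everything else — tightness, Skorokhod representation, identification of limits, and the compactness of the rate-function level sets — is then parallel to the LDP proof in Section \ref{sect:ldp.proof}, with the simplification that the skeleton equation is now linear in the unknown, so its well-posedness and the continuity $h \mapsto Z_h$ are softer than in the LDP case.
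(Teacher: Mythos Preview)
Your overall architecture is right and matches the paper: apply the Budhiraja--Dupuis criterion to $\Re = (U^\varepsilon - U^0)/(\sqrt{\varepsilon}\lambda(\varepsilon))$, identify the linear skeleton \eqref{eq:mdp.skeleton}, prove uniform-in-$\varepsilon$ bounds on the controlled process, and then run the Skorokhod/Bensoussan argument exactly as in Section~\ref{sect:ldp.proof}. You are also correct that the linearity of the skeleton makes the compactness step (ii) easier than in the LDP case; the paper notes this explicitly in Proposition~\ref{prop:mdp.skeleton}.

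The genuine gap is in step (i). You propose to obtain uniform bounds for $\Re_{h_\varepsilon}$ in $C([0,t];H)\cap L^2(0,t;V)$ by testing against $\Re_{h_\varepsilon}$ and controlling $\langle B(\Re,U^0),\Re\rangle$ via $|U^0|_{H^2}$. But the path space in which the MDP is claimed, and in which $\Gc^0_R$ of \eqref{eq:mdp.rate} takes values, is $C([0,t];V)\cap L^2(0,t;D(A))$. An $H$--$V$ energy estimate is only the first step; closing the required $V$--$D(A)$ bound for 3D primitive equations cannot be done directly by testing against $A\Re$, because the quadratic term $\sqrt{\varepsilon}\lambda(\varepsilon)\,(B(\Re,\Re),A\Re)$ is not controllable by $\Vert \Re\Vert$ and $\vert A\Re\vert$ alone, even with the small prefactor. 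The paper (Section~\ref{sect:mdp.preliminary}) therefore runs the full Cao--Titi programme on $\Re$: split the velocity part into barotropic and baroclinic modes $\upeb,\upet$, and build a hierarchy of estimates --- $L^6$ bounds on $\upet$ (Proposition~\ref{prop:mdp.l6}), $H^1(\Mc_0)$ bounds on $\upeb$ (Proposition~\ref{prop:mdp.grad.vb}), $\partial_z$ bounds (Propositions~\ref{prop:mdp.vz}--\ref{prop:mdp.T}) --- each step introducing a new stopping time and invoking the uniform stochastic Gronwall lemma (Proposition~\ref{prop:stoch.Gronwall}) before the final $V$-estimate (Proposition~\ref{prop:mdp.R}) can be closed via \eqref{eq:b.estimatel6}. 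The vanishing prefactor is exploited at \emph{each} level of this chain, not just once at the $H$-level. This is precisely the ``more delicate estimates of the non-linear part'' alluded to in the introduction, and it also explains why the paper needs more on $U^0$ than $L^\infty(0,t;H^2)$: the stopping time $\tau_K^0$ involves $\vert U^0\vert_{L^\infty}$, $\Vert\partial_z U^0\Vert$ and $\int\Vert\partial_z U^0\Vert_{H^2}^2$, which come from the $C(H^2)\cap L^2(H^3)$ regularity of \cite{caolititi2014} under periodic lateral boundary conditions.
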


For MDP, the additional regularity of solutions of \eqref{eq:ldp.skeleton.regularity} is not needed for the compactness argument, see Section \ref{sect:mdp.skeleton}.

If $\lambda(\varepsilon) \equiv 1$, the limiting process corresponds to the central limit theorem (CLT). The CLT for weak solutions of the stochastic 2D primitive equations has been recently established in \cite{zhang2019}. We prove a weaker version with convergence only $\Pb$-a.s.\ instead of convergence in $L^2(\Omega)$, the reason for this limitation being that our definition of the solution does not guarantee that our solutions considered over a time interval $[0, t]$ form an $L^2$-integrable random variable for any $t > 0$ deterministic. The following theorem will be proved in Section \ref{sect:clt}.

\begin{theorem}[almost sure CLT]
\label{thm:clt}
Let $\sigma$, $F$ and $u_0$ be as in Theorem \ref{thm:mdp}. Let $\Uh$ be the solution of
\begin{equation*}
	d\Uh + \left[ A\Uh + B(U^0, \Uh ) + B( \Uh, U^0 ) + F_d( \Uh ) \right] \ ds = \sigma( \Uh ) \, dW, \quad \Uh(0) = u_0.
\end{equation*}
Then 
\begin{equation*}
	\frac{U^\varepsilon - U^0}{\sqrt{\varepsilon}} \to \Uh \ \text{as $\varepsilon \to 0+$ in} \ C\left( [0, t], V \right) \cap L^2\left( 0, t; D(A) \right) \ \Pb\text{-a.s.}
\end{equation*}
\end{theorem}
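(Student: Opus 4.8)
The plan is to establish the convergence by deriving an estimate for the difference $\Re - \Uh$ where $\Re = (\Ue - U^0)/\sqrt{\varepsilon}$, and showing this difference tends to zero $\Pb$-a.s.\ in the stated topology. First I would write the equation satisfied by $\Re$. Subtracting \eqref{eq:intro.pe.zero} from \eqref{eq:intro.pe.eps} and dividing by $\sqrt{\varepsilon}$, the nonlinearity $B(\Ue) - B(U^0)$ splits via bilinearity as $B(\Ue - U^0, \Ue) + B(U^0, \Ue - U^0) = \sqrt{\varepsilon}\left[ B(\Re, \Ue) + B(U^0, \Re) \right]$, and similarly for the linear operators $\Apr$, $E$ and the first-order terms collected in $F_d$; the source term $F_U$ cancels. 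Thus $\Re$ solves an equation of the form
\begin{equation*}
	d\Re + \left[ A\Re + B(\Re, \Ue) + B(U^0, \Re) + F_d(\Re) \right] ds = \sigma(\Ue)\, dW,
\end{equation*}
which differs from the equation for $\Uh$ only in that $B(\Re, \Ue)$ replaces $B(\Re, U^0)$ and $\sigma(\Ue)$ replaces $\sigma(\Uh)$. Setting $Z^\varepsilon = \Re - \Uh$, the difference solves a linear (in $Z^\varepsilon$) equation with a stochastic forcing $(\sigma(\Ue) - \sigma(\Uh))\,dW$ and an extra drift term $B(\Re, \Ue - U^0) = \sqrt{\varepsilon}\, B(\Re, \Re)$, which is formally of order $\sqrt{\varepsilon}$ times a quadratic expression in $\Re$.

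Next I would apply the Itô formula in $V$ to $\norm{Z^\varepsilon}{V}{2}$, and also test the equation in $D(A)$ to control $\int_0^t \norm{AZ^\varepsilon}{}{2}\, ds$, exploiting the parabolic smoothing from $A$. The terms to estimate are: (i) the nonlinear terms $B(Z^\varepsilon, \Ue)$, $B(U^0, Z^\varepsilon)$ and $B(\Uh, Z^\varepsilon)$ paired against $Z^\varepsilon$ — here I would use the standard anisotropic Ladyzhenskaya-type estimates for the primitive equations nonlinearity together with the higher regularity $U^0 \in L^\infty(0,t;H^2)$ (as in the hypotheses inherited from Theorem \ref{thm:mdp}) and the a priori bounds on $\Ue$, $\Uh$ in $C([0,t];V)\cap L^2(0,t;D(A))$, absorbing the top-order pieces into the $\norm{AZ^\varepsilon}{}{2}$ term; (ii) the perturbation term $\sqrt{\varepsilon}\, B(\Re, \Re)$, which after pairing with $Z^\varepsilon$ or $AZ^\varepsilon$ is bounded by $\sqrt{\varepsilon}$ times a polynomial in the (almost surely finite) norms of $\Re$, hence an explicit $\sqrt{\varepsilon}$-small forcing; (iii) the stochastic term, for which $\lvert \sigma(\Ue) - \sigma(\Uh) \rvert \le L\lvert \Ue - \Uh \rvert = L\sqrt{\varepsilon}\,\lvert \Re \rvert$ plus $L\lvert Z^\varepsilon \rvert$ by the Lipschitz assumptions \eqref{eq:sigma.bnd.H}--\eqref{eq:sigma.vz}, so its quadratic variation contributes a term controllable by $\norm{Z^\varepsilon}{V}{2}$ plus an $\varepsilon$-small remainder, and the martingale part is handled by Burkholder–Davis–Gundy or, to stay within an a.s.\ framework, by the pathwise/stochastic Gronwall lemma already invoked in the paper.

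The conclusion then follows from a Gronwall argument: one obtains an inequality of the schematic form
\begin{equation*}
	\sup_{s \le t} \norm{Z^\varepsilon(s)}{V}{2} + \int_0^t \norm{AZ^\varepsilon(s)}{}{2}\, ds \le \varepsilon\, C_\omega(t) + \int_0^t \phi_\omega(s)\, \norm{Z^\varepsilon(s)}{V}{2}\, ds + (\text{martingale}),
\end{equation*}
with $\phi_\omega \in L^1(0,t)$ $\Pb$-a.s.\ (built from the $D(A)$-norms of $\Ue$, $U^0$, $\Uh$) and $C_\omega(t)$ finite $\Pb$-a.s., since $Z^\varepsilon(0)=0$. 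Applying the uniform stochastic Gronwall lemma used elsewhere in the paper yields $\Eb\left[\sup_{s\le t}\norm{Z^\varepsilon(s)}{V}{2} + \int_0^t\norm{AZ^\varepsilon}{}{2}\right] \lesssim \varepsilon$ on the event where the controlling norms are bounded, and a localization/stopping-time exhaustion argument upgrades this to $\Pb$-a.s.\ convergence to $0$ (possibly along a subsequence first, then the full limit by uniqueness of the limit). I expect the main obstacle to be step (i): controlling the nonlinear interaction $B(Z^\varepsilon, \Ue)$ where $\Ue$ only enjoys $V \cap D(A)$ regularity (not $H^2$-in-time-sup like $U^0$), so the estimate must be closed by spending the full $\int \norm{AZ^\varepsilon}{}{2}$ and using the integrability $\Ue \in L^2(0,t;D(A))$ to make the Gronwall coefficient $\phi_\omega$ merely $L^1$ in time rather than bounded — this is exactly the delicate point flagged in the introduction as the reason the 3D strong-solution case is harder than the 2D weak-solution case treated in \cite{zhang2019}.
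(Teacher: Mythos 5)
Your overall strategy is the same as the paper's (Section \ref{sect:clt}): pass to $\Re=(\Ue-U^0)/\sqrt{\varepsilon}$, subtract the limit process, apply the It\^{o} formula to the $V$-norm of the difference, and close with the stochastic Gronwall lemma plus a localization/exhaustion over events where the controlling norms are bounded. However, there is a genuine gap in your step (iii). Taking the limit equation literally as printed in the statement --- noise coefficient $\sigma(\Uh)$ and initial datum $\Uh(0)=u_0$ --- your key bound on the stochastic forcing is false: since $\Ue=U^0+\sqrt{\varepsilon}\Re$ and $Z^\varepsilon=\Re-\Uh$, one has $\Ue-\Uh=(U^0-\Uh)+\sqrt{\varepsilon}\Re$, and $U^0-\Uh$ is an order-one quantity (the fluctuation process $\Uh$ is in no sense close to the deterministic $U^0$), so $\Vert\sigma(\Ue)-\sigma(\Uh)\Vert_{L_2(\Uc,V)}$ is \emph{not} bounded by $C\left(\sqrt{\varepsilon}\Vert\Re\Vert+\Vert Z^\varepsilon\Vert\right)$ under any grouping, and your Gronwall inequality does not close. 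Likewise $Z^\varepsilon(0)=\Re(0)-\Uh(0)=-u_0\neq 0$, contradicting your assertion ``since $Z^\varepsilon(0)=0$''. What makes the argument work, and what the paper actually does, is to take $\Uh$ as the solution of the linearized equation driven by $\sigma(U^0)\,dW$ with $\Uh(0)=0$, see \eqref{eq:clt.uh} (the theorem statement misprints both the noise coefficient and the initial datum). Then the noise discrepancy in the equation \eqref{eq:clt.ye} for $\Ye=\Re-\Uh$ is $\sigma(\Ue)-\sigma(U^0)$, which by \eqref{eq:sigma.lip.H}--\eqref{eq:sigma.lip.V} really is of size $\sqrt{\varepsilon}$ times norms of $\Re$; this is precisely what produces the factor $\varepsilon$ in \eqref{eq:clt.convergence}. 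Your proof becomes correct only after making this same correction.

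Two further remarks on the drift terms. Grouping the nonlinearity as $B(U^0,\Ye)+B(\Ye,U^0)+\sqrt{\varepsilon}B(\Re,\Re)$, as in \eqref{eq:clt.ye}, avoids the interaction $B(Z^\varepsilon,\Ue)$ that you single out as the main obstacle: with the paper's grouping only the $H^2$-regularity of $U^0$ (via $\tau_K^0$) and the stopped bounds on $\Re$ enter, through the estimate \eqref{eq:b.estimate2}. Second, the ``almost surely finite'' bounds on $\Re$ that you invoke for the remainder $\sqrt{\varepsilon}B(\Re,\Re)$ must be uniform in $\varepsilon$; this is the real technical input and is exactly the content of Propositions \ref{prop:mdp.weak}--\ref{prop:mdp.R} (with $\lambda(\varepsilon)\equiv 1$), which your proposal assumes rather than proves --- acceptable here since Section \ref{sect:mdp.preliminary} provides them, but they should be cited explicitly rather than treated as routine.
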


We assume the Neumann boundary condition on the top and bottom parts of the boundary. On the lateral part we assume periodic boundary condition similarly as in e.g.\ \cite{caolititi2014}. The choice of the lateral boundary condition rests on the required additional regularity of the solution $U^0$, i.e.\ the solution of a deterministic equation \eqref{eq:intro.pe.zero}, see Section \ref{sect:mdp.preliminary}. To the best of our knowledge, the only result providing sufficient regularity for the estimates in Section \ref{sect:mdp.preliminary} has been established in \cite{caolititi2014} for the case of periodic boundary conditions on the lateral part of the boundary and is not known for the Dirichlet boundary conditions. We emphasize that we do not require any additional regularity of the solutions of the stochastic 3D primitive equations \eqref{eq:intro.pe.eps}.

The paper is organized as follows: In Section 2 we present the functional setting and recall the sufficient condition for the LDP by Budhiraja and Dupuis \cite{budhiraja2000}. The LDP, MDP and CLT are then established in Sections 3, 4 and 5, respectively. Since the proofs of Sections 3 and 4 are quite similar, we choose to present the crucial stochastic part of the proof of the LDP in full detail in Section 3 and discuss the MDP counterpart in Section 4 only briefly. Similarly, since the estimates in Section 4 are more involved than the estimates in Section 3, we include full proofs only of the estimates in Section 4.

\section{Preliminaries}
\label{sect:prelim}

Let $L, h > 0$ and let $\Mc_0 = (0, L) \times (0, L) \subseteq \Rb^2$, $\Mc = \Mc_0 \times (-h, 0) \subseteq \Rb^3$. We decompose the boundary of $\Mc$ into
\begin{equation*}
	\Gamma_i = \overline{\Mc_0} \times \lbrace 0 \rbrace, \quad \Gamma_l = \partial \Mc_0 \times (-h, 0), \qquad \Gamma_b = \overline{\Mc_0} \times \lbrace -h \rbrace.
\end{equation*}

\subsection{Functional setting}

The reformulated\footnote{For the original system and the reformulation procedure see e.g.\ \cite[Section 2.1]{petcu2009}.} stochastic 3D primitive equations\footnote{An equation for salinity is often included as well. However, since it does not introduce any additional mathematical difficulties, it is omitted here.} are given by
\begin{gather}
	\label{eq:pe.v.reform}
	\begin{multlined}
		\partial_t v + \left(v\cdot\grad\right)v + w(v)\partial_z v + \tfrac{1}{\rho_0} \grad p_S - \beta_T g \grad \int_z^0 T \, dz' + f \vec{k} \times v\\
		- \mu_v \laplace v - \nu_v \partial_{zz} v = F_v + \sigma_1\left(v, \grad_3 v, T, \grad_3 T\right) \dot{W}_1,
	\end{multlined}\\
	\div \int_{-h}^0 v(x, y, z') \, dz' = 0,\\
	\label{eq:pe.T.reform}
	\partial_t T + \left(v\cdot\grad\right)T + w\partial_z T - \mu_T \laplace T - \nu_T \partial_{zz} T = F_T + \sigma_2\left(v, \grad_3 v, T, \grad_3 T\right) \dot{W}_2,
\end{gather}
where
\begin{gather}
	\label{eq:pe.w.definition}
	w(v)(x, y, z) = - \int_{-h}^z \div v(x, y, z') \, dz'
\end{gather}
is the vertical velocity, $v = (v_1, v_2)$ denotes the horizontal velocity, $p_S$ is the surface pressure, $f$ is the Coriolis parameter, $\mu_v$, $\nu_v$ and $\mu_T$ and $\nu_T$ are the horizontal and vertical viscosity and diffusivity coefficients, respectively. The equations are being driven by deterministic non-autonomous forces $F_v$, $F_T$ and stochastic terms $\sigma_1$ and $\sigma_2$ with multiplicative white noise in time. In the whole manuscript the symbols $\div$, $\grad$ and $\laplace$ denote the two-dimensional (that is w.r.t.\ the horizontal coordinates $x$ and $y$) divergence, gradient and Laplacian, respectively. Their three-dimensional variants will be denoted by $\div_3$, $\grad_3$ and $\laplace_3$. The equations (\ref{eq:pe.v.reform}-\ref{eq:pe.T.reform}) are supplied with the initial data
\[
	v(0) = v_0, \qquad T(0) = T_0,
\]
and the boundary conditions
\begin{align*}
	\text{on $\Gamma_i$}:& & &\partial_z v = 0, \quad \partial_z T + \alpha  T = 0,\\
	\text{on $\Gamma_l$}:& & &\text{$v$ and $T$ are periodic},\\
	\text{on $\Gamma_b$}:& & &\partial_z v = 0, \quad \partial_z T = 0.
\end{align*}

Unless specified otherwise, all the function spaces are tacitly considered to contain functions with domain $\Mc$. The Lebesgue space $L^p(\Mc)$, $p \in [1, \infty]$, will be often denoted by $L^p$. The norms on $L^p(\Mc)$ and $L^p(\Mc_0)$ may both be denoted by $\vert \cdot \vert_{L^p}$. The norm and the inner product on $L^2$ will be denoted by $\vert \cdot \vert$ and $( \cdot, \cdot)$, respectively. We will also often not specify the range of the function spaces as it should be clear from the context. Therefore, assuming $k \in \Nb$ and $p \in [1, \infty]$, both the Sobolev spaces $W^{k, p}(\Mc)$ and $W^{k, p}(\Mc; \Rb^2)$ might be denoted by $W^{k, p}$. We will also use the notation $W^{k, 2} = H^k$, $k \in \Nb$. The norm on $H^1$ will be denoted by $\Vert \cdot \Vert$.

Let $H_1$ and $H_2$ be the spaces defined by
\[
	H_1 = \left\lbrace v \in L^2\left( \Mc; \Rb^2 \right) \mid \div \int_{-h}^0 v \, dz = 0, \int_{-h}^0 v \, dz \ \text{is periodic in $\Mc_0$} \right\rbrace, \quad H_2 = L^2\left( \Mc \right), 
\]
and let $H = H_1 \times H_2$. Equipped with the inner products of $L^2$, the spaces $H_1$, $H_2$ and $H$ are Hilbert spaces. Let $P_{H_1}: L^2 \to H_1$ be the hydrostatic Helmholtz-Leray projection, see \cite[Lemma 2.2]{lions1992b} and \cite[Proposition 4.3]{hieber2016}, and let
\[
	P_H = \begin{pmatrix}
		P_{H_1}\\
		I
	\end{pmatrix},
\]
where $I$ is the identity on $L^2$, denote the hydrostatic Helmholtz-Leray projection on the space $H$. Let $V_1$ be the space defined by
\[
	V_1 = \left\lbrace v \in H^1\left(\Mc; \Rb^2\right) \mid \div \int_{-h}^0 v \, dz = 0, v \ \text{is periodic w.r.t.\ $x$ and $y$} \right\rbrace , \quad H_2 = H^1\left(\Mc \right),
\]
and let $V = V_1 \times V_2$. The spaces $V_1$, $V_2$ and $V$ equipped by the inner product of $H^1$ are Hilbert spaces. Clearly, $V \hook H$.

Let $A_1: V_1 \to V_1'$, $A_2: V_2 \to V_2'$ be the symmetric linear operators given by the bilinear forms
\begin{gather*}
	(A_1 v, v^\sharp) = a_1(v, v^\sharp) = \int_{\Mc} \mu_v \grad v \cdot \grad v^\sharp + \nu_v \partial_z v \partial_z v^\sharp \, d\Mc,\\
	(A_2 T, T^\sharp) = a_2(T, T^\sharp) = \int_{\Mc} \mu_T \grad T \cdot \grad T^\sharp + \nu_T \partial_z T \partial_z T^\sharp \, d\Mc + \alpha \int_{\Gamma_i} T T^\sharp \, d\Gamma_i .
\end{gather*}
By \cite[Theorem 3.4]{ju2017} $A_1 = P_H \laplace$ in $L(V, V')$. Let
\[
	AU = \begin{pmatrix}
		A_1 v\\
		A_2 T
	\end{pmatrix}.
\]
The operators $A_1, A_2$ and $A$ can be extended to self-adjoint unbounded operators on $H_1, H_2$ and $H$, respectively, see \cite[Lemma 2.4]{lions1992b}. Then we have
\[
	D(A) = \left\lbrace U \in V \mid AU \in H \right\rbrace.
\]

Let $b$ be the trilinear form defined by
\[
	b(U, U^\sharp, U^\flat) = \int_{\Mc} \left[ (v \cdot \grad) v^\sharp + w(v) \partial_z v^\sharp \right] v^\flat + \left[ v \grad T^\sharp + w(v)\partial_z T^\sharp \right] T^\flat \, d\Mc.
\]
From \cite[Lemma 2.1]{petcu2009} we have
\begin{equation}
	\label{eq:b.estimate1}
	\left| b(U, U^\sharp, U^\flat) \right| \leq C \Vert U \Vert \Vert U^\sharp \Vert_{H^2} \Vert U^\flat \Vert, \qquad U, U^\flat \in H^1, U^\sharp \in H^2.
\end{equation}
By \cite[Lemma 3.1]{petcu2009} for $U, U^\sharp \in H^2$ and $U^\flat \in H$ we have
\begin{equation}
	\label{eq:b.estimatel6}
	\left|b\left(U, U^\sharp, U^\flat\right)\right| \leq C \left( \vert v \vert_{L^6} \Vert U^\sharp \Vert^{1/2} \Vert U^\sharp \Vert_{H^2}^{1/2} + \Vert v \Vert^{1/2} \Vert v \Vert_{H^2}^{1/2} \vert \partial_z U^\sharp \vert^{1/2} \Vert \partial_z U^\sharp \Vert^{1/2} \right) \vert U^\flat \vert.
\end{equation}
Using a similar argument as in \cite[Lemma 3.1]{petcu2009} one can establish
\begin{equation}
	\label{eq:b.estimate2}
	\left| b(U, U^\sharp, U^\flat) \right| \leq C \Vert U \Vert^{1/2} \Vert U \Vert_{H^2}^{1/2} \Vert U^\sharp \Vert^{1/2} \Vert U^\sharp \Vert_{H^2}^{1/2} \vert U^\flat \vert, \qquad U, U^\sharp \in H^2, U^\flat \in L^2.
\end{equation}
and an improvement of the estimate \eqref{eq:b.estimate1}
\begin{equation}
\label{eq:b.estimate3}
	\left| b\left(U, U^\sharp, U^\flat\right) \right| \leq C \Vert v \Vert  \Vert U^\sharp \Vert^{1/2} \Vert U^\sharp \Vert_{H^2}^{1/2} \vert U^\flat \vert^{1/2} \Vert U^\flat \Vert^{1/2}, \qquad U, U^\flat \in H^1, U^\sharp \in H^2.
\end{equation}
A similar estimate for $U^\flat \in H^3$ has been established in \cite[Lemma 2.3]{samelson1998}. The form $b$ has the anti-symmetry property
\[
	b(U, U^\sharp, U^\flat) = - b(U, U^\flat, U^\sharp), \qquad U \in V, U^\sharp, U^\flat \in V \cap H^2,
\]
in particular
\begin{equation}
	\label{eq:b.cancellation}
	b(U, U^\sharp, U^\sharp) = 0, \qquad U \in V, U^\sharp \in V \cap H^2.
\end{equation}
We define the bilinear operator $B$ by
\[
	B(U, U^\sharp) = P_H \begin{pmatrix}
		(v \cdot \grad) v^\sharp + w(v) \partial_z v^\sharp\\
		v \grad T^\sharp + w(v) \partial_z T^\sharp
	\end{pmatrix}
\]
and write $B(U) = B(U, U)$.

Let $\Apr: V \to H$ be the linear operator
\[
	\Apr U = P_H \begin{pmatrix}
		-\beta_T g \grad \int_z^0 T z, dz'\\
		0
	\end{pmatrix}.
\]
Clearly $\Apr$ is continuous. We define the linear operator $E: H \to H$ by
\[
	EU = P_H \begin{pmatrix}
		f \vec{k} \times v\\
		0
	\end{pmatrix}
\]
Clearly $E$ is continuous and $(EU, U) = 0$. Let
\begin{equation}
	\label{eq:f.source}
	F_U = P_H \begin{pmatrix}
		F_v\\
		F_T
	\end{pmatrix} \in L^2_{\rm{loc}}(0, \infty; H).
\end{equation}
We denote
\[
	F(U) = \Apr U + EU + F_U.
\] 
To summarize the above, we assume that $F: V \to H$ satisfies
\begin{align}
	\label{eq:F.bnd}
	\int_s^t \vert F(U) \vert^2 \, dr &\leq C \left( \int_s^t \vert F_U \vert^2 + \Vert U \Vert^2 \, dr \right), & &U \in V, 0 \leq s \leq t < \infty,\\
	\label{eq:F.lip}
	\vert F(U) - F(U^\sharp) \vert &\leq C \Vert U - U^\sharp \Vert, & &U \in V,
\end{align}
with the constant $C$ in \eqref{eq:F.bnd} independent of $s, t$.

Let $\Ac_2$, $\Ac_3$ and $\Rc$ be the averaging operators and the remainder defined for $v: \Mc \to \Rb^2$ by
\begin{equation}
	\label{eq:averaging.operators}
	\left(\Ac_2 v\right)(x, y) = \frac{1}{h} \int_{-h}^0 v(x, y, z') \, dz', \quad \left( \Ac_3 v \right)(x, y, z) = \left( \Ac_2 v\right)(x, y), \quad \Rc = I - \Ac_3.
\end{equation}
It is straightforward to check that $\Vert \Ac_3 \Vert_{L(H_1)} \leq 1$ and $\Vert \Ac_2 \Vert_{L(H_1, \Ho)} \leq h^{-1/2}$, $\Rc: H_1 \to H_1$ and $\Vert \Rc \Vert_{L\left(H_1\right)} \leq 2$. Since the spaces $H$ and $\Ho$ have the norm of $L^2\left(\Mc; \Rb^2 \right)$ and $L^2\left( \Mc_0; \Rb^2 \right)$, respectively, we observe that the operators $\Ac_2$, $\Ac_3$ and $\Rc$ remain bounded also if considered with $L^2\left(\Mc\right)$ and $L^2\left(\Mc_0\right)$ in place of $H_1$ and $\Ho$.

Let $\sigma \in \Lip\left(V, L_2\left( \Uc, H \right) \right) \cap \Lip\left(D(A), L_2\left( \Uc, V \right) \right)$, in particular
\begin{align}
	\label{eq:sigma.bnd.H}
	\Vert \sigma(U) \Vert_{L_2(\Uc, H)}^2 &\leq C\left( 1 + \vert U \vert^2 \right) + \eta_0 \Vert U \Vert^2, & U &\in H,\\
	\label{eq:sigma.bnd.V}
	\Vert \sigma(U) \Vert_{L_2(\Uc, V)}^2 &\leq C \left( 1 + \Vert U \Vert^2 \right) + \eta_1 \vert A U \vert^2, & U &\in V,\\
	\label{eq:sigma.lip.H}
	\Vert \sigma(U) - \sigma(U^\sharp) \Vert_{L_2(\Uc, H)}^2 &	\leq C \Vert U - U^\sharp \Vert^2, & U, U^\sharp &\in V,\\
	\label{eq:sigma.lip.V}
	\Vert \sigma(U) - \sigma(U^\sharp) \Vert_{L^2(\Uc, V)}^2 &\leq C \Vert U - U^\sharp \Vert^2 + \gamma \vert AU - AU^\sharp \vert^2, & U, U^\sharp &\in D(A),
\end{align}
for some $\eta_0, \eta_1, \gamma \geq 0$. Due to the nature of the estimates in Sections \ref{sect:ldp} and \ref{sect:mdp} we also assume that $\sigma$ satisfies the following structural assumptions
\begin{align}
	\label{eq:sigma1.l6}
	\sum_{k=1}^\infty \left| \Rc \sigma_1(U) e_k \right|_{L^6}^2 &\leq C \left( 1 + \vert \Rc v \vert_{L^6}^2 \right),\\
	\label{eq:sigma.rc.t}
	\sum_{k=1}^\infty \left| \sigma_2(U) e_k \right|_{L^6}^2 &\leq C \left( 1 + \vert T \vert_{L^6}^2 \right),\\
	\label{eq:sigma.vb}
	\Vert \Ac_2 \sigma_1(U) \Vert_{L_2(\Uc, \Vo)}^2 &\leq C \left( 1 + \Vert U \Vert^2 \right) + \eta_2 \vert A_S \Ac_2 v \vert_{\Ho}^2,\\
	\label{eq:sigma.vz}
	\Vert \partial_z \sigma_i(U) \Vert_{L_2(\Uc, L^2)}^2 &\leq C \left( 1 + \Vert U \Vert^2 \right) + \eta_3 \vert \grad_3 \partial_z U_i \vert^2.
\end{align}
with $\eta_2, \eta_3 \geq 0$. A non-trivial example of the noise term $\sigma$ depending on the horizontal gradient of the vertically averaged velocity satisfying the assumptions (\ref{eq:sigma.bnd.H}-\ref{eq:sigma.vz}) can be found in \cite[Section 2.5]{brzezniakslavik}.

Using the notation from the above we are able to rewrite the equations (\ref{eq:pe.v.reform}-\ref{eq:pe.T.reform}) in the abstract form
\begin{equation}
	\label{eq:pe.abstract}
	dU + \left[ AU + B(U) + F(U) \right] \, dt = \sigma(U) \, dW, \qquad U(0) = u_0.
\end{equation}

In \cite{guillengonzalez2001} the anisotropic spaces have been used do establish estimates on the non-linear term $B$. For $1 \leq p, q < \infty$ we denote
\[
	\vert v \vert_{L^q_x L^p_z} = \left( \int_{\Mc_0} \left( \int_{-h}^0 \left( \vert v_1 \vert^p + \vert v_2 \vert^p \right) \, dz \right)^{q/p} \, d\Mc_0 \right)^{1/q}.
\]

\begin{lemma}
\label{lemma:anisotropic.estimates}
The following anisotropic estimates hold with constant $C$ depending only on $\Mc$:
\begin{align}
	\label{eq:anis.1}
	\vert v \vert_{L^q_x L^2_z} &\leq C \vert v \vert^{2/q} \Vert v \Vert^{1-2/q}, & &v \in H^1, q \geq 2,\\
	\label{eq:anis.2}
	\vert v \vert_{L^q_x L^2_z} &\leq C \vert v \vert_{L^6}^{6/q} \Vert v^3 \Vert^{1/3 - 2/q}, & &v^3 \in H^1, q \geq 6,\\
	\label{eq:anis.3}
	\esssup_{z \in (-h, 0)} \vert v(\cdot, z) \vert_{L^2(\Mc_0)} \equiv \vert v \vert_{L^\infty_z L^2_x} &\leq C \vert v \vert^{1/2} \Vert v \Vert^{1/2}, & &v \in H^1,\\
	\label{eq:anis.4}
	\vert v^5 \vert_{L^3_x L^2_z} &\leq C \vert v \vert_{L^6} \Vert v^3 \Vert^{4/3}, & &v^3 \in H^1,\\
	\label{eq:anis.5}
	\vert v^2 \vert_{L^4_x L^3_z} &\leq C \vert v \vert_{L^6}^{3/2} \Vert v^3 \Vert^{1/6}, & &v^3 \in H^1.
\end{align}
\end{lemma}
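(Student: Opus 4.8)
The plan is to reduce all five estimates to one- and two-dimensional interpolation inequalities by working with the ``vertical norm'' of $v$. For $r\ge1$ and $v\colon\Mc\to\Rb^2$ set
\[
	\phi_r[v](x,y)=\Bigl(\int_{-h}^0\vert v(x,y,z)\vert^r\,dz\Bigr)^{1/r},\qquad(x,y)\in\Mc_0,
\]
so that by definition $\vert v\vert_{L^q_x L^r_z}=\vert\phi_r[v]\vert_{L^q(\Mc_0)}$ and $\vert\phi_r[v]\vert_{L^r(\Mc_0)}=\vert v\vert_{L^r(\Mc)}$. (Here and below I write $v^k$ for $\vert v\vert^k$, as in the statement.) The tools I plan to use are: (i) the two-dimensional Gagliardo--Nirenberg inequality $\vert g\vert_{L^q(\Mc_0)}\le C\vert g\vert_{L^2(\Mc_0)}^{2/q}\Vert g\Vert_{H^1(\Mc_0)}^{1-2/q}$ for $g\in H^1(\Mc_0)$ and $2\le q<\infty$; (ii) H\"older's inequality in the vertical variable on the finite interval $(-h,0)$; and (iii) the one-dimensional Agmon inequality $\vert f\vert_{L^\infty(-h,0)}\le C\vert f\vert_{L^2(-h,0)}^{1/2}\Vert f\Vert_{H^1(-h,0)}^{1/2}$. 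The computation that feeds (i) is the chain rule $\grad\phi_2[v]=\phi_2[v]^{-1}\int_{-h}^0 v\cdot\grad v\,dz$, which by the Cauchy--Schwarz inequality in $z$ gives the pointwise bound $\vert\grad\phi_2[v]\vert\le\bigl(\int_{-h}^0\vert\grad_3 v\vert^2\,dz\bigr)^{1/2}$; combined with $\vert\phi_2[v]\vert_{L^2(\Mc_0)}=\vert v\vert$ this yields $\Vert\phi_2[v]\Vert_{H^1(\Mc_0)}\le\Vert v\Vert$ whenever $v\in H^1(\Mc)$. Applied with $v^3$ in place of $v$ (using $\vert\grad_3 v^3\vert\le 3\vert v\vert^2\vert\grad_3 v\vert$ a.e.), this is the one place where the standing hypothesis $v^3\in H^1$ enters.

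For \eqref{eq:anis.1} I would apply (i) to $g=\phi_2[v]$ and insert the two identities above. For \eqref{eq:anis.3} I would argue directly in $z$: put $\psi(z)=\int_{\Mc_0}\vert v(\cdot,z)\vert^2\,d\Mc_0$, which is absolutely continuous with $\psi'(z)=2\int_{\Mc_0}v\cdot\partial_z v\,d\Mc_0$; writing $\psi(z)$ as the average over $z_0\in(-h,0)$ of $\psi(z_0)+\int_{z_0}^z\psi'(s)\,ds$ and estimating by Cauchy--Schwarz (in $z$, then over $\Mc$) gives $\psi(z)\le h^{-1}\vert v\vert^2+C\vert v\vert\Vert v\Vert\le C\vert v\vert\Vert v\Vert$, using $\vert v\vert\le\Vert v\Vert$; taking square roots is \eqref{eq:anis.3}. (This is essentially (iii) with the roles of the variables exchanged.)

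For \eqref{eq:anis.2} and \eqref{eq:anis.5} I would substitute $w=v^3\in H^1(\Mc)$. H\"older's inequality in $z$ gives $\phi_2[v]\le C\,\phi_2[w]^{1/3}$ and, exactly, $\phi_6[v]=\phi_2[w]^{1/3}$; hence $\vert v\vert_{L^q_x L^2_z}\le C\vert\phi_2[w]\vert_{L^{q/3}(\Mc_0)}^{1/3}$ for $q\ge6$ and $\vert v^2\vert_{L^4_x L^3_z}=\vert\phi_2[w]\vert_{L^{8/3}(\Mc_0)}^{2/3}$. Now (i) applied to $\phi_2[w]$ (legitimate because $q/3\ge2$, resp.\ $8/3\ge2$), together with $\vert\phi_2[w]\vert_{L^2(\Mc_0)}=\vert w\vert=\vert v\vert_{L^6}^3$ and $\Vert\phi_2[w]\Vert_{H^1(\Mc_0)}\le\Vert w\Vert=\Vert v^3\Vert$, gives \eqref{eq:anis.2} and \eqref{eq:anis.5} once the exponents are sorted out.

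Estimate \eqref{eq:anis.4} is the one I expect to cost the most work, because pure substitution no longer suffices and the vertical and horizontal directions must be handled separately. With $w=v^3$ and $\vert v\vert^5=w^{5/3}$ one rewrites \eqref{eq:anis.4} as $\vert w\vert_{L^5_x L^{10/3}_z}\le C\vert w\vert^{1/5}\Vert w\Vert^{4/5}$ (recalling $\vert w\vert=\vert v\vert_{L^6}^3$ and $\Vert w\Vert=\Vert v^3\Vert$). For this I would interpolate $L^{10/3}_z$ between $L^2_z$ and $L^\infty_z$ and apply (iii) to get, pointwise in $(x,y)$,
\[
	\vert w(x,y,\cdot)\vert_{L^{10/3}(-h,0)}\le C\vert w(x,y,\cdot)\vert_{L^2(-h,0)}^{4/5}\Vert w(x,y,\cdot)\Vert_{H^1(-h,0)}^{1/5};
\]
raising to the fifth power, integrating over $\Mc_0$, and splitting by H\"older's inequality with exponents $(2,2)$ gives
\[
	\vert w\vert_{L^5_x L^{10/3}_z}^5\le C\,\vert\phi_2[w]\vert_{L^8(\Mc_0)}^4\Bigl(\int_{\Mc_0}\Vert w(x,y,\cdot)\Vert_{H^1(-h,0)}^2\,d\Mc_0\Bigr)^{1/2},
\]
where the last factor is at most $\Vert w\Vert$; one then finishes by (i) with $q=8$ applied to $\phi_2[w]$ and the identities from the first paragraph. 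Apart from this estimate, the only real subtleties are the bookkeeping of the fractional exponents and verifying that the intermediate functions $\phi_2[v]$, $\phi_2[w]$ lie in $H^1(\Mc_0)$ so that (i) applies — which reduces to $v\in H^1$ (resp.\ $v^3\in H^1$) together with the chain-rule bound; no part of the argument uses the structure of the primitive equations.
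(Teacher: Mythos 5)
Your proposal is correct: all five exponents check out, and the reduction of everything to the vertically averaged function $\phi_2[\cdot]$ plus the two-dimensional Gagliardo--Nirenberg inequality is sound. For \eqref{eq:anis.1}--\eqref{eq:anis.3} you are essentially reconstructing what the paper handles by citation and by the same substitution: the paper refers to \cite[Lemma 3.1]{petcu2009} for \eqref{eq:anis.1} (whose proof is exactly your $\phi_2[v]$ argument), to \cite[Lemma 3.3(a)]{guillengonzalez2001} for \eqref{eq:anis.3} (your direct $\psi(z)$ computation is a proof of that cited fact), and derives \eqref{eq:anis.2} from \eqref{eq:anis.1} applied to $v^3$ via H\"older in $z$, just as you do. The genuine divergence is in \eqref{eq:anis.4}--\eqref{eq:anis.5}: the paper first swaps the order of integration by the Minkowski integral inequality, applies the 2D Gagliardo--Nirenberg inequality to $v^3$ slicewise in $z$, and then invokes \eqref{eq:anis.3} for $v^3$ (and declares \eqref{eq:anis.5} analogous), whereas you handle the vertical direction first through the interpolation $L^{10/3}_z$ between $L^2_z$ and $L^\infty_z$ plus the 1D Agmon inequality pointwise in $(x,y)$, and only then apply a single 2D Gagliardo--Nirenberg estimate to $\phi_2[v^3]$, never using Minkowski or \eqref{eq:anis.3}; your \eqref{eq:anis.5} even collapses to a pure substitution identity plus one Gagliardo--Nirenberg application, which is simpler than the paper's ``similarly as \eqref{eq:anis.4}''. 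Both routes are standard anisotropic techniques and give the same constants' dependence on $\Mc$; yours is more self-contained and uniform, the paper's is shorter because it leans on the cited lemmas. The only point to tidy up is the chain-rule identity for $\grad\phi_2[v]$ at points where $\phi_2[v]$ vanishes (and the implicit claim that $\phi_2[v]\in H^1(\Mc_0)$): this needs a smooth-approximation or $\sqrt{\varepsilon^2+\cdot}$ regularization argument, but it is routine and also implicit in the proofs you would otherwise cite, so it is a matter of presentation rather than a gap.
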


\begin{proof}
The estimate \eqref{eq:anis.1} can be shown by repeating the argument from the proof of \cite[Lemma 3.1]{petcu2009}. The inequality \eqref{eq:anis.3} has been established in \cite[Lemma 3.3(a)]{guillengonzalez2001}. The estimate \eqref{eq:anis.2} follows from \eqref{eq:anis.1}. For $s = 1-6/q$ we use the H\"{o}lder inequality and \eqref{eq:anis.1} and obtain
\begin{equation*}
	\vert v \vert_{L^q_x L^2_z} \leq \vert v^3 \vert_{L^{q/3}_x L^2_z}^{1/3} \leq C \vert v^3 \vert^{(1-s)/3} \Vert v^3 \Vert^{s/3} = C \vert v \vert_{L^6}^{1-s} \Vert v^3 \Vert^{s/3}.
\end{equation*}
Regarding \eqref{eq:anis.4} we use the Minkowski inequality, the Gagliardo-Nirenberg inequality in 2D and the estimate \eqref{eq:anis.3} to deduce
\begin{align*}
	\vert v^5 \vert_{L^3_x L^2_z}^2 &= \left( \int_{\Mc_0} \left(\int_{-h}^0 \vert v \vert^{10} \, dz \right)^{3/2} \, d\Mc_0 \right)^{2/3} \leq \int_{-h}^0 \left( \int_{\Mc_0} \vert v\vert^{15} \, d\Mc_0 \right)^{2/3} = \int_{-h}^0 \vert v^3 \vert_{L^5_x}^{10/3} \, dz\\
	&\leq C \int_{-h}^0 \vert v^3 \vert_{L^2_x}^{4/3} \vert \grad (v^3) \vert_{L^2_x}^{2} + \vert v^3 \vert_{L^2_x}^{10/3} \, dz \leq C \vert v^3 \vert_{L^\infty_z L^2_x}^{4/3} \Vert v^3 \Vert^2\\
	&\leq C \vert v^3 \vert^{2/3} \Vert v^3 \Vert^{8/3} \leq C \vert v \vert_{L^6}^2 \Vert v^3 \Vert^{8/3}.
\end{align*}
The final inquality \eqref{eq:anis.5} can be established similarly as \eqref{eq:anis.4}.
\end{proof}

For the sake of completeness let us recall the Burkholder-Davis-Gundy inequality. Let $X$ be a separable Hilbert space and $r \geq 2$. Let $W$ be a cylindrical Wiener process with RKHS $\Uc$ defined on a probability space $(\Omega, \Fc, \Pb)$. Then for all $\Phi \in L^2\left(\Omega, L^2\left(0, T; L_2\left(\Uc, X\right)\right)\right)$
\begin{equation}
	\label{eq:bdg}
	\Eb \sup_{t \in \left[0, T\right]} \left| \int_0^t \Phi \, dW \right|^r_X \leq C_{BDG} \, \Eb \left( \int_0^T \Vert \Phi \Vert_{L_2(\Uc, X)}^2 \, dt \right)^{r/2},
\end{equation}
where the constant $C_{BDG}$ depends only on $r$. For proof see e.g.\ \cite[Theorem 3.28, p.\ 166]{karatzas1991}.

\subsection{Definition of a solution}

Let $(\Omega, \Fc, \Fb, \Pb)$, $\Fb = (\Fc_t)_{t \geq 0}$, be a stochastic basis satisfying the usual conditions. We will consider only solutions strong both in the stochastic and PDE sense.

\begin{definition}
\label{def:solution}
A progressively measurable stochastic process $U: \Omega \times [0, \infty) \to V$ is a global solution of the equation \eqref{eq:pe.abstract} if there exists a nondecreasing sequence of $\Fb$-stopping times $\tau_N$ such that
\begin{enumerate}
	\item for all $N \in \Nb$ and $t \geq 0$
	\begin{equation}
	\label{eq:solution.regularity}
		U\left( \cdot \wedge \tau_N \right) \in L^2\left(\Omega; C\left([0, t], V\right)\right), \qquad \mathds{1}_{[0, \tau_N]}(\cdot) U \in L^2\left(\Omega; L^2\left(0, t; D(A) \right)\right),
	\end{equation}
	\item for all $N \in \Nb$ and $t \geq 0$ the the stopped process satisfies the following equation $\Pb$-a.s.\ in $H$
	\begin{equation}
		\label{eq:solution.def}
		U\left(t \wedge \tau_N\right) + \int_0^{t \wedge \tau_N} AU + B(U) + \Apr U + EU - F_U \, ds = U(0) + \int_0^{t \wedge \tau_N} \sigma(U) \, dW.
	\end{equation}
	\item $\tau_N \to \infty$ $\Pb$-almost surely.
\end{enumerate}
\end{definition}

The following well-posedness result has been established in \cite[Theorem 2.6]{brzezniakslavik}.

\begin{theorem}
\label{thm:global.existence}
Let $u_0 \in V$. Let $F_U$ satisfy \eqref{eq:f.source} and $F_T \in L^2\left( \Omega; L^2\left(0, t; L^2(\Gamma_i) \right) \right)$. Let $\sigma$ satisfy (\ref{eq:sigma.bnd.H}-\ref{eq:sigma.vz}). If the constants $\gamma$, $\eta_i$, $i = 1, 2, 3, 4$, in \eqref{eq:sigma.bnd.H}, \eqref{eq:sigma.bnd.V}, \eqref{eq:sigma.lip.V}, \eqref{eq:sigma.vb} and \eqref{eq:sigma.vz} are sufficiently small, then there exists a unique global solution of the equation \eqref{eq:pe.abstract}. Moreover, if $\tau_N$ is the sequence of stopping times from Definition \ref{def:solution}, the solution $U$ satisfies
\[
	\mathds{1}_{[0, \tau_N]} U \in L^p\left( \Omega; L^\infty(0, t; V) \right), \qquad \mathds{1}_{[0, \tau_N]} \Vert U \Vert^{p-2} \vert AU \vert^2 \in L^1\left( \Omega; L^1(0, t) \right),
\]
for all $N \in \Nb$, $p \geq 2$ and $t > 0$.
\end{theorem}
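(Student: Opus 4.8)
The plan is to construct the solution by a Galerkin approximation combined with a priori estimates, a stochastic compactness argument producing a martingale solution, and pathwise uniqueness upgraded to a strong solution via the Yamada--Watanabe theorem; equivalently, one may solve a globally well-posed truncated equation pathwise and then remove the truncation up to a stopping time. First, let $\{ e_k \}$ be the eigenbasis of the self-adjoint operator $A$ on $H$, let $P_n$ be the orthogonal projection onto $\lin\{ e_1, \dots, e_n \}$, and consider the finite-dimensional It\^o system satisfied by $U_n = P_n U$; its coefficients are locally Lipschitz and locally bounded, so it has a unique maximal local solution, which the estimates below make global.

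The heart of the matter are the energy estimates. Applying It\^o's formula to $\vert U_n \vert^2$, using the cancellation $b(U, U, U) = 0$ from \eqref{eq:b.cancellation}, the coercivity of $A$, \eqref{eq:sigma.bnd.H} and \eqref{eq:F.bnd}, and absorbing $\eta_0 \Vert U \Vert^2$ into the viscous term provided $\eta_0$ is small, yields control of $U_n$ in $L^\infty(0, t; H) \cap L^2(0, t; V)$ in expectation. The decisive step is the $V$-estimate obtained by applying It\^o's formula to $\Vert U_n \Vert^2$ (equivalently, formally testing with $A U_n$): this produces the term $b(U_n, U_n, A U_n)$, which in three dimensions is not controlled by the energy norms through the elementary inequalities alone. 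Here I would exploit the special structure of the hydrostatic nonlinearity via the barotropic--baroclinic splitting $U = \Ac_3 U + \Rc U$ following Cao and Titi (as used in \cite{petcu2009,caolititi2014}), estimating the vertically averaged and the fluctuating parts separately by means of the anisotropic inequalities of Lemma \ref{lemma:anisotropic.estimates} and the trilinear bounds \eqref{eq:b.estimate1}--\eqref{eq:b.estimate3}. The It\^o correction $\Vert \sigma(U_n) \Vert_{L_2(\Uc, V)}^2$ is dominated using \eqref{eq:sigma.bnd.V} and the structural noise assumptions \eqref{eq:sigma1.l6}--\eqref{eq:sigma.vz}; the smallness of $\eta_1, \eta_2, \eta_3$ and of $\gamma$ is precisely what allows these corrections to be absorbed into $\vert A U_n \vert^2$. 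Since the resulting differential inequality contains superlinear terms, one first localizes with the stopping times $\tau_N^n = \inf \{ s : \Vert U_n(s) \Vert \geq N \}$, derives uniform-in-$n$ bounds on the stopped processes by a stochastic Gronwall argument, and only then passes to the limit; the limiting stopping times $\tau_N$ satisfy $\tau_N \to \infty$ almost surely because the uniform estimates force $\Pb(\tau_N \leq t) \to 0$ as $N \to \infty$.

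With the a priori bounds at hand, tightness of the laws of $(U_n)$ in, say, $C([0, t], V') \cap L^2(0, t; V)$ together with weak continuity in $H$ follows from an Aldous-type criterion and the It\^o estimate on increments of the stochastic integral; a Skorokhod--Jakubowski representation then yields a martingale solution. Pathwise uniqueness is proved by writing the equation for the difference $U - \Ut$ of two solutions with the same data, applying It\^o's formula to $\Vert U - \Ut \Vert^2$, bounding the nonlinear difference by the trilinear estimates and the linear parts by \eqref{eq:sigma.lip.H}, \eqref{eq:sigma.lip.V} and \eqref{eq:F.lip}, and invoking the stochastic Gronwall lemma; Yamada--Watanabe then produces the unique strong solution. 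Finally, the higher moment bounds $\mathds{1}_{[0, \tau_N]} U \in L^p(\Omega; L^\infty(0, t; V))$ and $\mathds{1}_{[0, \tau_N]} \Vert U \Vert^{p-2} \vert AU \vert^2 \in L^1(\Omega; L^1(0, t))$ follow by applying It\^o's formula to $\Vert U \Vert^p$, or to $(1 + \Vert U \Vert^2)^{p/2}$, on $[0, t \wedge \tau_N]$ and combining the $V$-estimate, the structural noise assumptions and the Burkholder--Davis--Gundy inequality \eqref{eq:bdg}.

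The main obstacle I anticipate is the $V$-level nonlinear estimate: contrary to the two-dimensional case, $b(U, U, AU)$ cannot be handled by energy norms alone, and one must use the averaging/remainder decomposition of the hydrostatic nonlinearity while simultaneously keeping the gradient-dependent noise corrections small enough to be absorbed by dissipation; carrying both points out inside a stopping-time localization with constants uniform in $n$ and in the truncation level is what makes the argument delicate. The non-autonomous forcing $F_U$ and the boundary term $F_T \in L^2(\Omega; L^2(0, t; L^2(\Gamma_i)))$ enter only through the linear bounds \eqref{eq:F.bnd}--\eqref{eq:F.lip} and cause no extra trouble.
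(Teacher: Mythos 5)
The paper does not prove Theorem \ref{thm:global.existence} at all: it is imported verbatim from \cite[Theorem 2.6]{brzezniakslavik}, so there is no in-paper argument to compare yours against. Judged on its own terms, your outline follows the same general route as that reference (and as \cite{debussche2011,debussche2012}): local solutions via a Galerkin/truncation scheme, the Cao--Titi barotropic--baroclinic decomposition of \cite{cao2007} together with anisotropic estimates as in Lemma \ref{lemma:anisotropic.estimates} to handle $b(U,U,AU)$ in three dimensions, smallness of $\gamma,\eta_1,\eta_2,\eta_3$ to absorb the It\^o corrections produced by the gradient-dependent noise into the dissipation, stopping-time localization with a stochastic Gronwall argument, and pathwise uniqueness; you also correctly explain why the conclusion only provides moments of the stopped process, in agreement with Definition \ref{def:solution}.

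That said, what you have written is a plan rather than a proof. The decisive $V$-level bound cannot be closed by a single application of It\^o's formula to $\Vert U_n\Vert^2$ plus an appeal to ``the splitting'': the known argument runs through an ordered chain of separate estimates --- an $L^2$ bound, an $L^6$ bound for the baroclinic mode, an $H^1$ bound for the barotropic mode, bounds for the vertical derivatives of velocity and temperature --- each localized by its own stopping time, and only then the $\Vert U\Vert^p$ estimate, in which the previously controlled quantities enter as the Gronwall weight. This is exactly the structure reproduced for the shifted equation in Section \ref{sect:mdp.preliminary} (Propositions \ref{prop:mdp.weak}--\ref{prop:mdp.R}), which would serve as a template for the details you omit. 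Likewise the tightness/identification-of-the-limit step and the uniqueness estimate are only named, not carried out. Nothing in your sketch is wrong, and the obstacles you single out are the right ones, but at this level of detail it is a reconstruction of the strategy of \cite{brzezniakslavik} rather than a self-contained proof; within the present paper the intended ``proof'' is simply the citation of \cite[Theorem 2.6]{brzezniakslavik}.
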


In the subsequent sections we study equations with stochastic forcing $\sqrt{\varepsilon} \sigma(U) \, dW$ for $\varepsilon \in (0, \varepsilon_0]$ and study the limiting behaviour for $\varepsilon \to 0+$. Therefore, assuming that $\varepsilon_0$ is sufficiently small, the requirement on smallness of the constants $\gamma$, $\eta_i$, $i = 1, 2, 3, 4$, in the theorem above may be relaxed.

\subsection{Large deviations principle}

Let $\left( \Omega, \Fc, \Pb \right)$ be a complete probability space and let $X$ be a separable Banach space and let $W$ be a cylindrical Wiener process with reproducing kernel Hilbert space $\Uc$.

\begin{definition}
A function $I: X \to [0, \infty]$ is called a \emph{rate function} if it is lower semicontinuous. If for all $M > 0$ the set $\lbrace U \in X \mid I(U) \leq M \rbrace$ is compact a rate function $I$ is called a \emph{good rate function}.
\end{definition}

\begin{definition} 
Let $\varepsilon_0 > 0$ and let $\lbrace \Ue \rbrace_{\varepsilon \in (0, \varepsilon_0]}$ be $X$-valued stochastic processes, $U^0 \in X$ and let $I$ be a rate function. We say that the processes $\lbrace \Ue \rbrace_{\varepsilon \in (0, \varepsilon_0]}$ satisfy
\begin{enumerate}
	\item the \emph{large deviations principle} (LDP) with the rate function $I$ if for each $A \in \Borel(X)$
	\[
		- \inf_{x \in A^o} I(x) \leq \liminf_{\varepsilon \to 0+} \varepsilon \log \Pb(\lbrace U^\varepsilon \in A \rbrace)\\
		\leq \limsup_{\varepsilon \to 0+} \varepsilon \log \Pb(\lbrace U^\varepsilon \in A \rbrace) \leq - \inf_{x \in \bar{A}} I(x),
	\]
	\item the \emph{moderate deviations principle} (MDP) the with rate function $I$ if the processes
	\[
		R^\varepsilon = \lbrace (U^\varepsilon - U^0)/(\sqrt{\varepsilon} \lambda(\varepsilon) \rbrace_{\varepsilon \in (0, \varepsilon_0]}
	\]
	satisfy the LDP with the rate function $I$.
\end{enumerate}
\end{definition}

It is well-known that if $\Uc_0$ is a separable Hilbert space such that the embedding $\Uc \hook \Uc_0$ is Hilbert-Schmidt, the cylindrical Wiener process $W$ has continuous trajectories in $\Uc_0$. Let $\Gc^\varepsilon: C([0, t], \Uc_0) \to X$ be a measurable map such that $\Ue = \Gc^\varepsilon(W(\cdot))$. The following theorem from \cite[Theorem 4.4]{budhiraja2000} establishes a sufficient condition for the LDP.

\begin{theorem}
\label{thm:ldp.abstract}
Let $\Gc^0: C\left( [0, T], \Uc_0 \right) \to X$ be a measurable map satisfying the following conditions:
\begin{enumerate}
	\item Let $M > 0$ and let $\lbrace h_\varepsilon \mid \varepsilon \in (0, \varepsilon_0] \rbrace \subseteq \Ac_M$ be such that $h_\varepsilon \to h$ in distribution for some $\Tc_M$-valued random variable $h$. Then
	\begin{equation}
		\label{eq:ldp.cond.1}
		\Gc^\varepsilon\left( W(\cdot) + \frac{1}{\sqrt{\varepsilon}} \int_0^\cdot h_\varepsilon(s) \, ds \right) \to \Gc^0\left( \int_0^\cdot h(s) \, ds \right)	 \quad \text{in distribution}.
	\end{equation}
	\item The set
	\begin{equation}
	\label{eq:ldp.cond.2}
		K_M = \left\{ \Gc^0 \left( \int_0^\cdot h(s) \, ds \right) \mid h \in \Tc_M \right\} 	
	\end{equation}
	is compact in $X$ for all $M  > 0$.
\end{enumerate}
Then $Y^\varepsilon$ satisfies the large deviations principle with a good rate function
\begin{equation}
	\label{eq:ldp.rate}
	I(U) = \inf \left\lbrace \frac12 \int_0^t \vert h \vert_{\Uc}^2 \, ds \mid h \in L^2(0, t; \Uc) \ \text{s.t.} \ U = \Gc^0\left(\int_0^\cdot h \, ds\right) \right\rbrace.
\end{equation}
\end{theorem}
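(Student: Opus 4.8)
This theorem is the now-classical sufficient condition for the Laplace principle due to Budhiraja and Dupuis, and the plan is to recall its proof via the weak convergence approach. Write $\Ac$ for the class of progressively measurable $\Uc$-valued processes $h$ with $\int_0^T \vert h(s) \vert_\Uc^2 \, ds < \infty$ $\Pb$-a.s. The cornerstone is the Bou\'e--Dupuis variational representation, which in the relevant scaled form reads, for every bounded measurable $g \colon X \to \Rb$,
\[
	-\varepsilon \log \Eb \left[ \exp \left( -\tfrac{1}{\varepsilon} g(\Gc^\varepsilon(W)) \right) \right] = \inf_{h \in \Ac} \Eb \left[ \tfrac12 \int_0^T \vert h(s) \vert_\Uc^2 \, ds + g \left( \Gc^\varepsilon \left( W(\cdot) + \tfrac{1}{\sqrt{\varepsilon}} \int_0^\cdot h(s) \, ds \right) \right) \right].
\]
Since $X$ is a Polish space, it suffices to establish the Laplace principle, namely that for every $g \in C_b(X)$ the left-hand side converges to $\inf_{x \in X}\{ g(x) + I(x) \}$ with $I$ as in \eqref{eq:ldp.rate}, and to verify that $I$ is a good rate function; by a standard equivalence these together yield the LDP with the rate function $I$.

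For the bound $\liminf_{\varepsilon \to 0+}(-\varepsilon \log \Eb[\cdot]) \geq \inf_{x}\{g(x)+I(x)\}$ I would fix $\delta > 0$, pick $\delta$-near-optimal controls $\he \in \Ac$ in the representation, and first localize: replacing $\he$ by its truncation at the stopping time at which the running energy $\tfrac12 \int_0^\cdot \vert \he \vert_\Uc^2$ first reaches a large constant $M = M(\delta)$ changes the right-hand side by at most $\delta$ because of the energy penalty, so one may assume $\he \in \Ac_M$ for all $\varepsilon$. On the metrizable ball $\Tc_M$ the weak topology is compact, so $\{ \he \}$ is tight and, along a subsequence, $\he \to h$ in distribution for some $\Tc_M$-valued $h$. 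Hypothesis \eqref{eq:ldp.cond.1} then gives $\Gc^\varepsilon(W(\cdot) + \varepsilon^{-1/2} \int_0^\cdot \he) \to \Gc^0(\int_0^\cdot h)$ in distribution; passing to the Skorokhod representation and using weak lower semicontinuity of $h \mapsto \int_0^T \vert h \vert_\Uc^2$, continuity and boundedness of $g$, together with the pointwise inequality $\tfrac12 \int_0^T \vert h \vert_\Uc^2 \geq I(\Gc^0(\int_0^\cdot h))$ (immediate from \eqref{eq:ldp.rate}), I obtain $\liminf \Eb[\cdot] \geq \Eb[ I(\Gc^0(\int_0^\cdot h)) + g(\Gc^0(\int_0^\cdot h)) ] \geq \inf_{x}\{g(x)+I(x)\}$, and letting $\delta \to 0$ closes this direction.

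For the reverse bound $\limsup_{\varepsilon \to 0+}(-\varepsilon \log \Eb[\cdot]) \leq \inf_{x}\{g(x)+I(x)\}$ I fix $x$ with $I(x) < \infty$ and $\delta > 0$, choose a \emph{deterministic} $h \in L^2(0, T; \Uc)$ with $\Gc^0(\int_0^\cdot h) = x$ and $\tfrac12 \int_0^T \vert h \vert_\Uc^2 \leq I(x) + \delta$, and use this $h$ as an admissible control in the representation. Applying \eqref{eq:ldp.cond.1} with the constant sequence $\he \equiv h$ gives $\Gc^\varepsilon(W(\cdot) + \varepsilon^{-1/2} \int_0^\cdot h) \to x$ in distribution, whence $\Eb[ g(\Gc^\varepsilon(\cdot)) ] \to g(x)$ by bounded continuity, so $\limsup(-\varepsilon \log \Eb[\cdot]) \leq \tfrac12 \int_0^T \vert h \vert_\Uc^2 + g(x) \leq I(x) + g(x) + \delta$; taking the infimum over $x$ and letting $\delta \to 0$ finishes. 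It remains to see that $I$ is a good rate function, and this is exactly where hypothesis \eqref{eq:ldp.cond.2} enters: using weak compactness of energy balls together with \eqref{eq:ldp.cond.1} restricted to deterministic controls one checks that the infimum in \eqref{eq:ldp.rate} is attained, so that the level set $\{ U : I(U) \leq M \}$ coincides with $K_M$, which is assumed compact.

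The step I expect to be the main obstacle is the $\liminf$ direction: making the localization of the near-optimal controls uniform in $\varepsilon$ while keeping the associated cost error small, and then carrying out the passage to the limit in distribution — the map $(h, W) \mapsto \Gc^\varepsilon(W(\cdot) + \varepsilon^{-1/2} \int_0^\cdot h)$ is not continuous in $\varepsilon$, so one genuinely needs the Skorokhod representation together with hypothesis \eqref{eq:ldp.cond.1} and the joint tightness of the controls and the driving noise. Everything else is soft functional analysis combined with the variational representation.
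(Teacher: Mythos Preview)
The paper does not prove this theorem at all: it is quoted verbatim as \cite[Theorem 4.4]{budhiraja2000} and used as a black box, so there is no ``paper's own proof'' to compare against. Your proposal is a faithful outline of the standard Budhiraja--Dupuis argument via the Bou\'e--Dupuis variational representation, which is exactly how the cited reference proves it.

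Two small imprecisions in your sketch are worth flagging. First, the level set $\{U:I(U)\le M\}$ is not $K_M$ but $K_{2M}$ (the factor $\tfrac12$ in the rate function), a harmless slip. Second, and slightly more substantive, your argument that the infimum in \eqref{eq:ldp.rate} is attained invokes \eqref{eq:ldp.cond.1} ``restricted to deterministic controls''; but \eqref{eq:ldp.cond.1} is a statement about $\Gc^\varepsilon$ as $\varepsilon\to 0$, not about continuity of $\Gc^0$ itself on $\Tc_M$. The clean way to get attainment (and hence goodness of $I$) is to use that $\Tc_M$ is weakly compact together with weak lower semicontinuity of the energy and the continuity of $h\mapsto \Gc^0(\int_0^\cdot h)$ on $\Tc_M$; the latter is typically either assumed separately or deduced from the compactness hypothesis \eqref{eq:ldp.cond.2} plus a sequential argument. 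None of this affects the correctness of the overall strategy.
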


\section{Large deviations principle}
\label{sect:ldp}

The proofs in this section are modifications of the ones in \cite{dong2017}, the modifications consisting in considering less regular noise term $\sigma$ and taking into account the definition of solution of the stochastic primitive equations. We will concentrate mainly on the differences between our setting and the one in \cite{dong2017}.

Let $U^\varepsilon$ be the solution of the equation
\begin{equation}
	\label{eq:ldp.main}
	d\Ue + \left[ A\Ue + B\left( \Ue \right) + F\left( \Ue \right) \right] \, dt = \sqrt{\varepsilon} \sigma\left( \Ue \right) \, dW, \qquad \Ue(0) = u_0.
\end{equation}
The equation \eqref{eq:ldp.main} is well-posed by Theorem \ref{thm:global.existence} for sufficiently small $\varepsilon > 0$. By the result of \cite{rockner2008} there exists a measurable map $\Gc^\varepsilon: C([0, t], \Uc_0) \to C([0, t], V) \cap L^2(0, t; D(A))$ such that $U^\varepsilon = \Gc^\varepsilon(W(\cdot))$.

\subsection{Skeleton equation}
\label{sect:ldp.skeleton}

Let $h \in L^2\left( 0, T; \Uc \right)$. By the \emph{skeleton equation} we understand the equation
\begin{equation}
	\label{eq:skeleton}
	dU_h + \left[ AU_h + B(U_h) + F(U_h) \right] \, dt = \sigma\left( U_h \right) h \, dt, \qquad U_h(0) = u_0.
\end{equation}

\begin{proposition}
\label{prop:skeleton.existence}
The equation \eqref{eq:skeleton} is well-posed. Moreover, the solution $U_h$ satisfies
\begin{equation}
	\label{eq:skeleton.bounds}
	\sup_{s \in [0, t]} \Vert U_h \Vert^2 + \int_0^t \vert AU_h \vert^2 \, ds \leq C_{t, M, U_0}.
\end{equation}
\end{proposition}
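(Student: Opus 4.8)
The plan is to establish well-posedness of the skeleton equation \eqref{eq:skeleton} by a standard Galerkin approximation together with the energy estimate \eqref{eq:skeleton.bounds}, and then obtain uniqueness by a direct estimate on the difference of two solutions. First I would set up finite-dimensional approximations $U_h^{(n)}$ by projecting \eqref{eq:skeleton} onto the span of the first $n$ eigenfunctions of $A$; since the right-hand side $\sigma(U_h)h$ is, for fixed $h \in L^2(0,t;\Uc)$, a locally Lipschitz map (by \eqref{eq:sigma.lip.H}) times an $L^2$-in-time function, the projected system is an ODE with an $L^2$-in-time forcing and has a local solution, which will be global once the a priori bound is in hand.

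The core is the a priori estimate. Testing \eqref{eq:skeleton} with $U_h$ and using the cancellation \eqref{eq:b.cancellation} (so the $B$ term drops), the coercivity of $A$, the bound \eqref{eq:F.bnd} on $F$, and \eqref{eq:sigma.bnd.H} for $\sigma$, one controls $\sup_{s}\vert U_h(s)\vert^2 + \int_0^t \Vert U_h\Vert^2\,ds$, handling $(\sigma(U_h)h, U_h)$ via Cauchy--Schwarz and $\vert h\vert_\Uc \in L^2(0,t)$ plus Gronwall. Then, to reach the $H^1$/$D(A)$ bound stated in \eqref{eq:skeleton.bounds}, I would test with $AU_h$: the troublesome term is $b(U_h, U_h, AU_h)$, which I would estimate using \eqref{eq:b.estimate2} (or \eqref{eq:b.estimate3}) to get something like $C\Vert U_h\Vert^{3/2}\vert AU_h\vert^{3/2}$, absorbed into $\tfrac14\vert AU_h\vert^2$ via Young's inequality at the cost of a $C\Vert U_h\Vert^6$ term; the $\sigma$ contribution $(\sigma(U_h)h, AU_h)$ uses \eqref{eq:sigma.bnd.V}, absorbing $\eta_1\vert AU_h\vert^2$ (legitimate for $\eta_1$ small), and $F$ uses \eqref{eq:F.bnd} again. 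A Gronwall argument on $t \mapsto \Vert U_h(t)\Vert^2$, now that $\int_0^t\Vert U_h\Vert^2$ is already bounded from the first step, closes the estimate and gives \eqref{eq:skeleton.bounds} uniformly in $n$.

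With the uniform bound, I would pass to the limit $n\to\infty$: weak-$\ast$ in $L^\infty(0,t;V)$, weak in $L^2(0,t;D(A))$, and — using the equation to bound $\partial_t U_h^{(n)}$ in a negative space together with Aubin--Lions — strong in $L^2(0,t;V)$, which suffices to identify the nonlinear limit $B(U_h^{(n)}) \to B(U_h)$ and the Lipschitz term $\sigma(U_h^{(n)})h \to \sigma(U_h)h$; continuity in time with values in $V$ follows from the regularity $U_h \in L^2(0,t;D(A))$ with $\partial_t U_h \in L^2(0,t;H)$. For uniqueness, given two solutions $U_h^1, U_h^2$ with $D := U_h^1 - U_h^2$, I would subtract the equations, test with $D$, write $b(U_h^1,U_h^1,D) - b(U_h^2,U_h^2,D) = b(D, U_h^1, D) + b(U_h^2, D, D)$, kill the last term by \eqref{eq:b.cancellation}, bound $b(D,U_h^1,D)$ by \eqref{eq:b.estimate1} as $C\Vert D\Vert\,\Vert U_h^1\Vert_{H^2}\,\Vert D\Vert$ (legitimate since $U_h^1 \in D(A) \subseteq H^2$), use \eqref{eq:F.lip} and \eqref{eq:sigma.lip.H} on the remaining terms, and conclude via Gronwall with the integrable weight $\Vert U_h^1\Vert_{H^2}^2 + \vert h\vert_\Uc^2$.

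I expect the main obstacle to be the $D(A)$-estimate: getting the nonlinear term $b(U_h,U_h,AU_h)$ under control with an exponent strictly below $2$ on $\vert AU_h\vert$ so that it can be absorbed, and simultaneously ensuring the resulting superlinear term in $\Vert U_h\Vert$ (the $\Vert U_h\Vert^6$ factor) is handled by a Gronwall argument that genuinely uses the already-established $L^2(0,t;V)$ bound rather than circularly needing the $L^\infty(0,t;V)$ bound. Care is also needed because the time-dependent coefficient $\vert h(s)\vert_\Uc^2$ is only $L^1$ in time, so all Gronwall applications must be of the integral form; this is routine but must be tracked to keep the constant $C_{t,M,U_0}$ depending only on $t$, on $M \geq \int_0^t\vert h\vert_\Uc^2\,ds$, and on $\Vert u_0\Vert$.
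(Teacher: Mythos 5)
There is a genuine gap at the step you yourself flagged as the main obstacle: the $D(A)$-estimate for the nonlinear term does not close the way you claim. For the \emph{3D} primitive equations the estimate \eqref{eq:b.estimate2} applied to $b(U_h,U_h,AU_h)$ gives
\[
	\left| b(U_h, U_h, AU_h) \right| \leq C \Vert U_h \Vert \Vert U_h \Vert_{H^2} \vert AU_h \vert \sim C \Vert U_h \Vert \, \vert AU_h \vert^2,
\]
i.e.\ the exponent on $\vert AU_h \vert$ is $2$, not $3/2$, and the term cannot be absorbed by Young's inequality into $\tfrac14 \vert AU_h\vert^2$ since the prefactor $\Vert U_h \Vert$ is not small. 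The exponent $3/2$ you quote is the 2D Navier--Stokes phenomenon; it is precisely its failure in 3D that made global strong well-posedness of the primitive equations a hard problem. The alternative \eqref{eq:b.estimate3} is not applicable either, because it requires the third argument to lie in $H^1$, i.e.\ $AU_h \in H^1$, which amounts to $H^3$ regularity that is not available at this stage. Consequently your Galerkin scheme produces no uniform $H^1$/$D(A)$ bound, and the whole global-existence and compactness part of the argument collapses; the local theory and the uniqueness computation you describe are fine, but they are not the issue.

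The paper's proof instead invokes the Cao--Titi strategy (through \cite[Section 5]{dong2017}, with the approach of \cite{petcu2009} for local existence and \cite{cao2007} for global bounds): one decomposes the velocity into barotropic and baroclinic modes, establishes an $L^6$ bound on the baroclinic part, an $H^1$ bound on the barotropic part, and bounds on $\partial_z$ of the solution, and only then closes the $H^1$/$D(A)$ estimate by controlling the nonlinear term through \eqref{eq:b.estimatel6}, which trades $\vert AU_h \vert^2$ for the previously controlled quantities $\vert v \vert_{L^6}$ and $\vert \partial_z U \vert \Vert \partial_z U \Vert$. This is exactly the chain of estimates carried out in Section \ref{sect:mdp.preliminary} (Propositions \ref{prop:mdp.weak}--\ref{prop:mdp.R}) for the more complicated stochastic analogue, with the control term $\sigma(U_h)h$ causing only minor extra work; note in particular how in Proposition \ref{prop:mdp.R} the dangerous term is handled by \eqref{eq:b.estimatel6} rather than by direct absorption. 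To repair your proposal you would have to import this whole intermediate layer of estimates; the bound \eqref{eq:skeleton.bounds} is not obtainable by a single test against $AU_h$.
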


\begin{proof}
The proof runs along the same lines as in \cite[Section 5]{dong2017}, that is by using the approaches by \cite{petcu2009} and \cite{cao2007} for the local and global existence results, respectively, with only minor additional complications in regards to the presence of the control term $\sigma(U_h) h$. Similar estimates will be seen in Section \ref{sect:mdp.preliminary} for a more complicated stochastic equation.
\end{proof}

Before we proceed to the compactness argument, let us return to the regularity assumption $U_h \in L^\infty(0, t; H^2)$ from Theorem \ref{thm:ldp}. Existence of local regular solutions of \eqref{eq:skeleton} has been established in \cite{caolititi2014} for $\sigma \equiv 0$ by a fixed point argument. In the following Proposition we introduce additional assumptions on $\sigma$ to ensure $U_h \in L^\infty(H^2)$. These assumptions are more or less a technical issue in the sense that the LDP result from Theorem \ref{thm:ldp} depends only on the regularity of $U_h$ and not on the particular additional assumptions formulated in the following Proposition.

\begin{proposition}
Let $\sigma: [0, \infty) \times V \to H$ be such that the bounds (\ref{eq:sigma.bnd.H}-\ref{eq:sigma.bnd.V}) and (\ref{eq:sigma1.l6}-\ref{eq:sigma.vz}) are satisfied with $\sigma(t, U)$ instead of $\sigma(U)$ with constants $C, \eta_i$, $i=0, 1, 2, 3$, independent of $t$. Let $\gamma_H, \gamma_V: [0, \infty) \to [0, \infty)$ be continuous functions such that $\gamma_H(0) = \gamma_V(0) = 0$ and
\begin{align}
	\label{eq:sigma.lip.tH}
	\Vert \sigma(t, U_1) - \sigma(t, U_2) \Vert_{L_2\left( \Uc, H\right)} &\leq \gamma_H(t) \Vert U_1 - U_2 \Vert, & U_1, U_2 &\in V,\\
	\label{eq:sigma.lip.tV}
	\Vert \sigma(t, U_1) - \sigma(t, U_2) \Vert_{L_2\left( \Uc, V\right)} &\leq \gamma_V(t) \Vert U_1 - U_2 \Vert_{H^2}, & U_1, U_2 &\in H^2.
\end{align}
Then the solution $U_h$ of \eqref{eq:skeleton} satisfies
\[
	U_H \in L^\infty\left(0, t; H^2\right) \cap L^2\left(0, t; H^3\right)
\]
for all $t > 0$, $h \in L^2(0, t; \Uc)$ and $u_0 \in V \cap H^2$.
\end{proposition}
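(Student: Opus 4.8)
The plan is to establish the $H^2$ (and $H^3$) regularity of $U_h$ by a bootstrap energy argument on the Galerkin approximations, combined with the already available estimate \eqref{eq:skeleton.bounds} which controls $\sup_{[0,t]}\Vert U_h\Vert^2+\int_0^t\vert AU_h\vert^2\,ds$. The strategy mirrors the deterministic argument of \cite{caolititi2014} for $\sigma\equiv 0$, the only new ingredient being the control term $\sigma(U_h)h$, which is handled by the structural bounds (\ref{eq:sigma.bnd.V}) and (\ref{eq:sigma.vz}) together with the smallness of the constants $\eta_i$ (available here since we may shrink $\varepsilon_0$, equivalently treat $\sigma$ with small constants). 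First I would test the equation with $A^2 U_h$ (or, on the Galerkin level, apply $A$ to the equation and test with $AU_h$), producing
\begin{equation*}
	\tfrac{1}{2}\tfrac{d}{dt}\vert AU_h\vert^2 + \Vert AU_h\Vert^2 \leq \vert(B(U_h), A^2U_h)\vert + \vert(F(U_h), A^2 U_h)\vert + \vert(\sigma(U_h)h, A^2U_h)\vert.
\end{equation*}
The linear term $F$ is harmless by \eqref{eq:F.bnd} after an integration by parts / Cauchy-Schwarz with a small fraction of $\Vert AU_h\Vert^2$. For the stochastic control term one writes $(\sigma(U_h)h, A^2U_h) = (A^{1/2}\sigma(U_h)h, A^{3/2}U_h)$, bounds $\Vert\sigma(U_h)h\Vert_{V}\le \Vert\sigma(U_h)\Vert_{L_2(\Uc,V)}\vert h\vert_{\Uc}$ using \eqref{eq:sigma.bnd.V}, and absorbs the $\eta_1\vert AU_h\vert^2\vert h\vert_\Uc^2$ contribution via the Gronwall factor $\exp(C\int_0^t\vert h\vert_\Uc^2\,ds)$ — here $h\in L^2(0,t;\Uc)$ is exactly what makes this integrable. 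The delicate vertical-derivative part of $\sigma$ enters through \eqref{eq:sigma.vz} and is treated analogously, again absorbing the small-$\eta_3$ term.

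The genuine difficulty, as in all $H^2$ estimates for the primitive equations, is the nonlinear term $b(U_h,U_h,A^2U_h)$ (or $b(U_h,U_h,AU_h)$ on the Galerkin level after applying $A$). This is where one must exploit the anisotropic structure: the hydrostatic Laplacian controls only horizontal and full gradients, and the naive bound $\vert b(U,U,A^2U)\vert\lesssim\Vert U\Vert_{H^2}^2\Vert A^2U\Vert$ is not closeable. Instead I would follow \cite{caolititi2014}: split $v = \Ac_2 v + \Rc v$ (the barotropic/baroclinic decomposition via the operators in \eqref{eq:averaging.operators}), estimate the barotropic mode using 2D Sobolev/Ladyzhenskaya-type inequalities where the $L^\infty_z L^2_x$ and anisotropic bounds of Lemma~\ref{lemma:anisotropic.estimates} are sharp enough, and estimate the baroclinic part using the cancellation property \eqref{eq:b.cancellation} and estimates \eqref{eq:b.estimatel6}–\eqref{eq:b.estimate3} adapted to the $H^3$ test function. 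The outcome should be a bound of the form $\vert b(\ldots)\vert \le \tfrac14\Vert AU_h\Vert^2 + C(1+\Vert U_h\Vert^2+\vert AU_h\vert^2)(1+\vert AU_h\vert^2)$, i.e.\ a differential inequality $\tfrac{d}{dt}\vert AU_h\vert^2 \le \phi(t)(1+\vert AU_h\vert^2)$ with $\phi\in L^1(0,t)$ thanks to \eqref{eq:skeleton.bounds} and $h\in L^2$. Gronwall then gives $U_h\in L^\infty(0,t;D(A))\cap L^2(0,t;H^3)$, which (since $D(A)\hookrightarrow H^2$ with the boundary conditions built in) yields $U_h\in L^\infty(0,t;H^2)$.

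A couple of technical points need care and I would address them explicitly. First, the argument must be run on Galerkin approximations $U_h^n$ (spanned by eigenfunctions of $A$, so that testing with $A^2U_h^n$ is legitimate), deriving the uniform-in-$n$ bound and then passing to the limit; uniqueness from Proposition~\ref{prop:skeleton.existence} identifies the limit with $U_h$, and lower semicontinuity of norms transfers the bound. Second, the Lipschitz hypotheses \eqref{eq:sigma.lip.tH}–\eqref{eq:sigma.lip.tV} with $\gamma_H(0)=\gamma_V(0)=0$ are what one uses for the \emph{local} well-posedness / fixed-point step in $H^2$ (as in \cite{caolititi2014}), guaranteeing a positive existence time in the higher-regularity class before the global energy estimate takes over; I would first establish a local $H^2$ solution on some $[0,t^*]$ by contraction in $C([0,t^*];H^2)$ using these hypotheses and the estimates \eqref{eq:b.estimate1}, \eqref{eq:b.estimate2} for the nonlinearity, and then use the a priori bound just derived to continue it to all of $[0,t]$. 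The main obstacle, to reiterate, is purely the nonlinear term: getting the barotropic–baroclinic splitting to close the $\vert AU_h\vert^2$ estimate without any term that cannot be absorbed into $\Vert AU_h\Vert^2$ or into an $L^1$-in-time Gronwall coefficient.
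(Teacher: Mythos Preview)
Your proposal contains both ingredients that the paper uses --- a local-in-time fixed-point argument in $C([0,t^*];H^2)\cap L^2(0,t^*;H^3)$ following \cite{caolititi2014}, and a global a~priori estimate to continue the solution --- but the emphasis is inverted. The paper treats the fixed-point step as the core of the proof: it linearises by solving $d\Vc+A\Vc\,dt=(D(U)+\sigma(U)h)\,dt$ for given $U\in X_t$, tests the difference $\Vc_1-\Vc_2$ against itself and against $-\nabla\Delta\Vc$, and obtains a contraction constant of order $K^2t^{1/2}+M^2\sup_{s\in[0,t]}\max\{\gamma_H(s),\gamma_V(s)\}$; the hypotheses $\gamma_H(0)=\gamma_V(0)=0$ are exactly what makes this small for small $t$. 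The global a~priori bound is then dispatched in one line by citation to \cite{ju2017.1}. Your Galerkin-with-$A^2$-test outline would, if it closed, make the $\gamma$-hypotheses redundant --- a signal that the intended argument is the fixed point, with the energy estimate only for continuation. One caution on the a~priori side: a single test against $A^2U_h$ with a barotropic/baroclinic split does not close directly for the primitive equations; the cited works (and the paper itself in Section~\ref{sect:mdp.preliminary}) proceed through a cascade of intermediate estimates (\,$\tilde v$ in $L^6$, $\nabla\bar v$, $\partial_z v$, etc.) before reaching the final $H^2$ bound, so your claimed inequality for $|b(U_h,U_h,A^2U_h)|$ should be justified via that route rather than expected from \eqref{eq:b.estimatel6}--\eqref{eq:b.estimate3} alone.
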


\begin{proof}
We briefly comment on the extension of the fixed point argument from \cite[Section 2]{caolititi2014}. For simplicity we will not include all the details such as the reflection argument, which will lead to ignoring some of the spatial symmetries in the system. Let for $t > 0$
\begin{gather*}
	X_t = \left\lbrace U = (v, T) \in C\left([0, t], H^2 \right) \cap L^3\left(0, t; H^3\right) \mid \div \Ac_3 v = 0 \right\rbrace,\\
	\Vert U \Vert_{X_t}^2 = \sup_{s \in [0, t]} \Vert U(s) \Vert^2_{H^2} + \int_0^t \Vert U(s) \Vert_{H^3}^2 \, ds.
\end{gather*}
Let
\begin{align*}
	D(U) = -B(U) - \Apr U - EU - F_U, \qquad U \in X_t.
\end{align*}
Using the decomposition into barotropic and baroclinic modes as in \cite{cao2007} and standard regularity theory for the Stokes equation and parabolic equations one can show that the equation
\begin{equation}
	\label{eq:v.regularity}
	d\Vc + A\Vc \, dt = \left( D(U) + \sigma(U)h \right) \, dt, \qquad \Vc(0) = U(0),
\end{equation}
has a solution in $X_t$ for all $U \in X_t$. Let $h \in L^2(0, t; \Uc)$ with $\Vert h \Vert_{L^2(0, t; \Uc)} \leq M$ for some $M \geq 0$. Let $U_1, U_2 \in X_t$ such that $\Vert U_1 \Vert_{X_t}, \Vert U_2 \Vert_{X_t} \leq K$ for some $K \geq 0$ and let $\Vc_1, \Vc_2 \in X_t$ be the respective solutions of \eqref{eq:v.regularity} with $U = U_1$ and $U_2$, respectively. Subtracting the equations for $\Vc_1$ and $\Vc_2$, testing by $\Vc = \Vc_1 - \Vc_2$ and $- \grad \laplace \Vc$ and using the bounds on $D(U)$ from \cite[Proposition 2.2]{caolititi2014} and (\ref{eq:sigma.lip.tH}-\ref{eq:sigma.lip.tV}) we obtain the estimate
\[
	\sup_{s \in [0, t]} \Vert \Vc \Vert^2_{H^2} + \int_0^t \Vert \grad \Vc \Vert_{H^2}^2 \, ds \leq C \Vert U_1 - U_2 \Vert_{X_t}^2 \left( K^2 t^{1/2} + M^2 \sup_{s \in [0, t]} \max \lbrace \gamma_H(s), \gamma_V(s) \rbrace \right).
\]
It is now easy to see that thanks to continuity of $\gamma_H$ and $\gamma_V$ and $\gamma_H(0) = \gamma_V(0) = 0$ one can get a contraction. The rest of the fixed point argument follows easily as in \cite[Propositions 2.1 and 2.3]{caolititi2014}.

The global existence follows from apriori estimates similar to those in \cite[Section 3]{ju2017.1}.
\end{proof}

\begin{proposition}
\label{prop:skeleton.convergence}
Let $\lbrace h_n \rbrace_{n=1}^\infty \subseteq \Tc_M$ and let $U^n = U_{h_n}$ be the respective solutions of \eqref{eq:skeleton}. Then there exist $h \in L^2(0, t; \Uc)$ and a subsequence (for simplicity not relabelled) such that $h_n \harp h$ in $L^2(0, t; \Uc)$ and $U^n \to U_h$ in $L^\infty(0, t; V) \cap L^2(0, t; D(A))$.

Moreover, there exists a measurable map $\Gc^0: C([0, t], \Uc_0) \to C([0, t], V) \cap L^2(0, t; D(A))$ such that $U_h = \Gc^0(\int_0^\cdot h_s \, ds)$.
\end{proposition}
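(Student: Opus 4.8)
The statement is the standard second ingredient of the Budhiraja--Dupuis criterion applied to the skeleton map: weak compactness of controls implies strong convergence of the corresponding controlled deterministic solutions. The plan is to combine the a priori bound \eqref{eq:skeleton.bounds}, which is uniform over $h \in \Tc_M$, with a suitable time-derivative bound, and then to extract a convergent subsequence using Aubin--Lions compactness, identifying the limit as the solution $U_h$ of \eqref{eq:skeleton} via weak lower semicontinuity and the bilinearity estimates from Section \ref{sect:prelim}.

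\textbf{Step 1: uniform bounds and extraction.} Since $h_n \in \Tc_M$, the sequence $(h_n)$ is bounded in $L^2(0, t; \Uc)$, so after passing to a subsequence $h_n \harp h$ in $L^2(0, t; \Uc)$ for some $h \in L^2(0, t; \Uc)$. By Proposition \ref{prop:skeleton.existence} the constant $C_{t, M, U_0}$ in \eqref{eq:skeleton.bounds} depends only on $t$, $M$, $U_0$, so $(U^n)$ is bounded in $L^\infty(0, t; V) \cap L^2(0, t; D(A))$, uniformly in $n$. From the equation \eqref{eq:skeleton} one estimates $\partial_t U^n$: the terms $AU^n$ and $F(U^n)$ are bounded in $L^2(0,t;H)$ by \eqref{eq:skeleton.bounds} and \eqref{eq:F.bnd}; the nonlinear term $B(U^n)$ is bounded in $L^2(0,t;H)$ using \eqref{eq:b.estimate2} (or \eqref{eq:b.estimate3}) together with the bound on $U^n$ in $L^2(0,t;D(A)) \cap L^\infty(0,t;V)$; and the control term $\sigma(U^n) h_n$ is bounded in $L^1(0,t;H)$ since $\|\sigma(U^n)\|_{L_2(\Uc,H)}$ is bounded in $L^\infty(0,t)$ by \eqref{eq:sigma.bnd.H} and $h_n$ is bounded in $L^2(0,t;\Uc)$, hence in $L^2(0,t;V')$ by interpolation. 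Therefore $(\partial_t U^n)$ is bounded in, say, $L^2(0,t;H) + L^1(0,t;H) \hook L^1(0,t;V')$. By the Aubin--Lions--Simon lemma, $(U^n)$ is relatively compact in $C([0,t];H)$ and in $L^2(0,t;V)$; combined with the $L^\infty(0,t;V)$ bound and interpolation $\|U^n - U^m\|_V^2 \lesssim |U^n - U^m| \cdot |AU^n - AU^m|$, one upgrades to convergence in $C([0,t];H) \cap L^2(0,t;V)$, and passing to a further subsequence, $U^n \to U$ in $L^\infty(0,t;V) \cap L^2(0,t;D(A))$ weakly-$*$ / weakly, and strongly in $C([0,t];H) \cap L^2(0,t;V)$, for some limit $U$ with $U \in L^\infty(0,t;V) \cap L^2(0,t;D(A))$.

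\textbf{Step 2: identification of the limit.} One passes to the limit in the weak formulation of \eqref{eq:skeleton} for $U^n$. The linear terms $AU^n$, $EU^n$, $\Apr U^n$ pass by weak convergence in $L^2(0,t;H)$. For the nonlinear term, the strong convergence $U^n \to U$ in $L^2(0,t;V)$ together with the bound in $L^2(0,t;D(A))$ and estimate \eqref{eq:b.estimate1} (applied to the bilinear form $B(U^n, \cdot) - B(U, \cdot)$ and $B(\cdot, U)$ with the difference) gives $B(U^n) \to B(U)$ in, e.g., $L^1(0,t;V')$. For the control term, one writes $\sigma(U^n) h_n - \sigma(U) h = (\sigma(U^n) - \sigma(U)) h_n + \sigma(U)(h_n - h)$; the first term tends to zero in $L^1(0,t;H)$ using the Lipschitz bound \eqref{eq:sigma.lip.H}, the strong convergence of $U^n$ in $L^2(0,t;V)$ (hence in $C([0,t];H)$ is not quite enough — one uses $\|U^n-U\|$ in $L^2(0,t)$), and boundedness of $h_n$ in $L^2(0,t;\Uc)$; the second term tends to zero weakly in $L^2(0,t;H)$ because $h \mapsto \int_0^\cdot \sigma(U) h \, ds$ is a compact (Hilbert--Schmidt composed) linear operator, so maps the weakly convergent $h_n - h \harp 0$ to a strongly convergent sequence. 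Thus $U$ solves \eqref{eq:skeleton} with control $h$, and by the uniqueness part of Proposition \ref{prop:skeleton.existence}, $U = U_h$; since the limit is the same for every subsequence, the whole (sub)sequence converges.

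\textbf{Step 3: measurable selection.} Finally, to produce the map $\Gc^0$ one argues as in \cite{budhiraja2000, dong2017}: for $g \in L^2(0,t;\Uc)$ one sets $\Gc^0(\int_0^\cdot g \, ds) := U_g$, the unique solution of the skeleton equation, and $\Gc^0 := 0$ (or any fixed value) on the complement of $\{\int_0^\cdot g\,ds : g \in L^2(0,t;\Uc)\}$ in $C([0,t];\Uc_0)$, which is a Borel set. Measurability of $g \mapsto U_g$ from $(L^2(0,t;\Uc), \text{weak})$ — equivalently, of the primitive $\int_0^\cdot g\,ds \mapsto U_g$ — into $C([0,t];V)\cap L^2(0,t;D(A))$ follows from the convergence just proved (a map that is sequentially continuous for the weak topology on bounded sets, which are metrizable, is Borel measurable), which is exactly the continuity statement established in Steps 1--2.

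\textbf{Main obstacle.} The delicate point is the passage to the limit in the nonlinear term $B(U^n)$ with the quadratic nonlinearity of the 3D primitive equations, which requires the strong convergence in $L^2(0,t;V)$ (not merely in $L^2(0,t;H)$) to control one factor while using the $L^2(0,t;D(A))$ bound on the other, via \eqref{eq:b.estimate1}; obtaining that strong $V$-convergence in turn is what forces the interpolation/Aubin--Lions argument in Step 1 and relies crucially on the uniform $L^2(0,t;D(A))$ bound from \eqref{eq:skeleton.bounds}. A secondary subtlety is ensuring the time-derivative bound on $U^n$ is genuinely uniform in $n$, which uses the uniformity of the constant in \eqref{eq:skeleton.bounds} over $\Tc_M$.
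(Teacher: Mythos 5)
There is a genuine gap: your argument stops short of the actual conclusion of the proposition. The statement asserts \emph{strong} convergence $U^n \to U_h$ in $L^\infty(0,t;V) \cap L^2(0,t;D(A))$ (this is what makes the level sets $K_M$ in \eqref{eq:ldp.cond.2} compact in $C([0,t],V)\cap L^2(0,t;D(A))$, the topology in which the LDP is stated), but your Steps 1--2 only deliver weak-$*$/weak convergence in those spaces together with strong convergence in $C([0,t];H)\cap L^2(0,t;V)$ and the identification of the limit as $U_h$. The interpolation $\Vert U^n-U^m\Vert^2 \lesssim \vert U^n-U^m\vert\,\vert AU^n-AU^m\vert$ cannot be pushed further: $D(A)$ is the top regularity available, so weak convergence plus a uniform bound in $L^2(0,t;D(A))$ gives no strong $L^2(0,t;D(A))$ convergence, and nothing in your argument yields uniform-in-time convergence of $\Vert U^n - U_h\Vert$ either. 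This missing step is exactly the hard part of the paper's proof: one writes the equation for $w^n = U^n - U_h$, applies the chain rule to $\Vert w^n\Vert^2$, estimates the convective difference via \eqref{eq:b.estimate2}, $F$ via \eqref{eq:F.lip}, and splits the control term as $(\sigma(U^n)-\sigma(U_h))h_n + \sigma(U_h)(h_n-h)$, where the second piece is controlled in $L_2(\Uc,V)$ by \eqref{eq:sigma.bnd.V} using the \emph{additional regularity} $U_h \in L^\infty(0,t;H^2)$ assumed in Theorem \ref{thm:ldp}; a Gronwall argument combined with the already-known strong $L^2(0,t;V)$ convergence then gives $w^n \to 0$ in $C([0,t],V)$ and in $L^2(0,t;H^2)$. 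Your proposal never invokes that $H^2$ regularity of the skeleton solution, which is a telltale sign that the decisive estimate (and the very reason the paper imposes the regularity hypothesis \eqref{eq:ldp.skeleton.regularity}) has been skipped.

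The parts you do carry out are essentially in line with the paper: the extraction of weakly convergent subsequences, the identification of the limit by testing against $U^\sharp \in D(A)$ (the paper handles $\sigma(\Ut)(h_n-h)$ simply by weak convergence in $L^2(0,t;H)$, where your compact-operator argument is an acceptable variant), and the definition of $\Gc^0$ as in \eqref{eq:gc.0}. But as written the proof establishes a strictly weaker conclusion, so the compactness condition \eqref{eq:ldp.cond.2} and hence Theorem \ref{thm:ldp} would not follow; you need to add the energy/Gronwall argument for $w^n$ in the $V$-norm described above.
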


\begin{proof}
Recalling the growth estimate of $\sigma$ \eqref{eq:sigma.bnd.H} in $L_2(\Uc; H)$ we may repeat the argument from \cite[Proposition 5.3]{dong2017} to show that there exists a (not relabelled) subsequence $U^n$ and $\Ut \in L^\infty(0, t; V) \cap L^2(0, t; D(A))$ such that 
\begin{equation}
	\label{eq:skeleton.weak.convergence}
	U^n \to \Ut \ \text{in} \ L^2(0, t; V), \quad U^n \harpstar \Ut \ \text{in} \ L^\infty(0, t; V), \quad U^n \harpstar \Ut \ \text{in} \ L^2(0, t; D(A)).
\end{equation}
The continuity of $\Ut$ can be show using a maximal regularity type argument based on the Lions-Magenes lemma, see e.g.\ \cite[Lemma 1.2, Chapter 3]{temam1979}, similarly as in \cite[Section 4]{ju2007} for the deterministic case or \cite[Section 7.3]{debussche2011} for the stochastic case. The stochastic case with the same assumptions on $\sigma$ can be found in \cite[Proposition 3.3]{brzezniakslavik}.

Next we show $\Ut = U_h$. We proceed similarly as in e.g.\ \cite[Section 7]{debussche2011}. Let $U^\sharp \in D(A)$ be fixed. Since $A$ is self-adjoint, we may use the Cauchy-Schwartz inequality and the first convergence in \eqref{eq:skeleton.weak.convergence} to get
\[
	\left| \int_0^t \left( AU^n - A\Ut, U^\sharp \right) \, ds \right| \leq C_t \Vert U^\sharp \Vert \left( \int_0^t \Vert U^n - \Ut \Vert^2 \, ds \right)^{1/2} \to 0.
\]
By the Lipschitz continuity of F in \eqref{eq:F.lip} and the first convergence in \eqref{eq:skeleton.weak.convergence} we have
\[
	\left| \int_0^t \left( F(U^n) - F(\Ut), U^\sharp \right) \, ds \right| \leq C_t \vert U^\sharp \vert \left( \int_0^t \Vert U^n - \Ut \Vert^2 \, ds \right) \to 0.
\]
By the estimate \eqref{eq:b.estimate1} and the first two convergences \eqref{eq:skeleton.weak.convergence} we deduce
\begin{align*}
	\left| \int_0^t \left( B(U^n) - B(\Ut), U^\sharp \right) \, ds \right| &\leq C \int_0^t \Vert U^n \Vert  \Vert U^n - \Ut \Vert \vert A U^\sharp \vert + \Vert U^n - \Ut \Vert \Vert \Ut \Vert \vert AU^\sharp \vert \, ds\\
	&\leq C_t \left( \sup_{s \in [0, t]} \Vert U^n \Vert + \sup_{s \in [0, t]} \Vert \Ut \Vert \right) \left( \int_0^t \Vert U^n - \Ut \Vert^2 \, ds \right)^{1/2} \to 0.
\end{align*}
By the triangle inequality we have
\begin{multline*}
	\left| \int_0^t \left( \sigma(U^n) h_n - \sigma(\Ut) h, U^\sharp \right) \, ds \right|\\
	\leq \int_0^t \left| \left( \left[ \sigma(U^n) - \sigma(\Ut) \right] h_n, U^\sharp \right) \right| \, ds + \left| \int_0^t \left( \sigma(\Ut)\left[ h^n - h \right], U^\sharp \right) \, ds \right| =  \left| I_1^n \right|  + \left| I_2^n \right|.
\end{multline*}
The first integral can be estimated directly using the Lipschitz continuity \eqref{eq:sigma.lip.H} of $\sigma$ in $L_2(\Uc, H)$, the boundedness of $h_n$ and the first convergence in \eqref{eq:skeleton.weak.convergence} by
\[
	\left| I_1^n \right| \leq C \vert U^\sharp \vert \left( \int_0^t \vert h_n \vert^2_{\Uc} \, ds \right)^{1/2} \left( \int_0^t \Vert U^n - \Ut \Vert^2 \, ds \right)^{1/2} \to 0.
\]
Since $\sigma(\Ut): L^2\left( 0, t; \Uc \right) \to L^2\left(0, t; H \right)$ is a linear bounded operator, the integral $I_2^n$ converges to zero using the weak convergence $\sigma(\Ut) [h_n - h] \harp 0$ in $L^2\left( 0, t; H \right)$. Collecting the above we get
\begin{multline*}
	\left( U^n(t), U^\sharp \right) + \int_0^t \left( AU^n, U^\sharp \right) + \left( B(U^n), U^\sharp \right) + \left( F(U^n), U^\sharp \right) - \left( \sigma(U^n), U^\sharp \right)\\
	\to \left( \Ut(t), U^\sharp \right) + \int_0^t \left( A\Ut, U^\sharp \right) + \left( B(\Ut), U^\sharp \right) + \left( F(\Ut), U^\sharp \right) - \left( \sigma(\Ut), U^\sharp \right)
\end{multline*}
and thus $\Ut$ satisfies the equation \eqref{eq:skeleton} in $D(A)'$. Using the density of $D(A)$ in $H$ we get that $\Ut$ satisfies \eqref{eq:skeleton} in $H$ and thus by uniqueness $\Ut = U_h$.

It remains to establish the strong convergence in $L^\infty(0, t; V) \cap L^2(0, t; D(A))$. To that end we adapt the proof of \cite[Theorem 5.8]{dong2017}. Let $w^n = U^n - U_h$. Clearly, $w_n$ satisfies
\[
	dw^n + \left[ Aw^n + B(U^n) - B(U_h) + F(U^n) - F(U_h) \right] \, dt = \left[ \sigma(U^n) h_n - \sigma(U_h) \right] \, dt, \quad w^n(0) = 0.
\]
By the Ito lemma, see e.g.\ \cite[Theorem A.1]{brzezniakslavik}, we have
\begin{multline*}
	d\Vert w^n \Vert^2 + 2\vert Aw^n \vert^2 \, dt = - 2 \left( A^{1/2}\left[ B(U^n) - B(U_h) \right], A^{1/2} w^n \right) \, dt\\
	- 2 \left[ \left( A^{1/2}\left[ F(U^n) - F(U_h) \right], A^{1/2} w^n \right) + \left( A^{1/2}\left[ \sigma(U^n)h_n - \sigma(U_h)h \right], A^{1/2} w^n \right) \right] \, dt.
\end{multline*}
Let $\eta > 0$ be fixed and precisely determined later and let $s \in [0, t]$. Since $A$ is self-adjoint, we recall the Lipschitz continuity of $F$ in \eqref{eq:F.lip} and use the Cauchy-Schwartz and the Young inequalities to obtain
\[
	\left| 2 \int_0^s \left( A^{1/2}\left[ F(U^n) - F(U_h) \right], A^{1/2} w^n \right) \, dr \right| \leq \frac{\eta}{3} \int_0^s \vert A w^n \vert^2 \, ds + C_\eta \int_0^t \Vert w^n \Vert^2 \, dr.
\]
Next we use the estimate \eqref{eq:b.estimate2} and the boundedness of $U^n$ in $C([0, t], V)$ to deduce
\begin{align*}
	\bigg| 2 \int_0^s &\left( A^{1/2}\left[ B(U^n) - B(U_h) \right], A^{1/2} w^n \right) \, dr \bigg|\\
	&\leq \int_0^s \left| \left( B(U^n, U^n - U_h), Aw^n \right) \right| \, dr + \int_0^s \left| \left( B(U^n - U_h, U_h), Aw^n \right) \right| \, dr\\
	&\leq \frac{\eta}{3} \int_0^s \vert Aw^n \vert^2 \, dr + C_\eta \int_0^s \vert AU^n \vert^2 \Vert w^n \Vert^2 \, dr.
\end{align*}
The final term is treated using the idea from \cite[Theorem 5.8]{dong2017} using the additional regularity of the solution of the skeleton equation \eqref{eq:skeleton}. By the Lipschitz continuity of $\sigma$ \eqref{eq:sigma.lip.H} in $L_2(\Uc, H)$ and the growth estimate of $\sigma$ \eqref{eq:sigma.bnd.V} in $L_2(\Uc, V)$ we obtain
\begin{align*}
	\bigg| \int_0^s &\left( A^{1/2}\left[ \sigma(U^n)h_n - \sigma(U_h)h \right], A^{1/2} w^n \right) \, dr\bigg|\\
	&\leq \left| \int_0^s \left( A^{1/2}\left[ \sigma(U^n) - \sigma(U_h)\right] h_n, A^{1/2} w^n \right) \, dr \right| + \left| \int_0^s \left( A^{1/2} \sigma(U_h)\left[ h_n - h \right], A^{1/2} w^n \right) \, dr\right| \\
	&\leq \frac{\eta}{3} \int_0^s \vert Aw^n \vert^2 \, dr + C_\eta \int_0^s \vert h_n \vert_{\Uc}^2 \Vert w^n \Vert^2 \, dr\\
	&\hphantom{\leq \ } + C\sup_{r \in [0, s]}\left( 1 + \Vert U_h \Vert_{H^2}\right) \left( \int_0^s \vert h_n - h \vert_{\Uc}^2 \, dr \right)^{1/2} \left( \int_0^s \Vert w^n \Vert^2 \, dr \right)^{1/2}.
\end{align*}
Collecting the estimates above and choosing $\eta > 0$ sufficiently small we get
\begin{multline}
	\label{eq:sk.compactness.estimate}
	\Vert w^n(s) \Vert^2 + \int_0^s \vert Aw^n \vert^2 \, ds \leq C_h \left( \int_0^s \Vert w^n \Vert^2 \, dr \right)^{1/2}\\
	+ C_\eta \int_0^s \Vert w^n \Vert^2 \left( 1 + \vert AU^n \vert^2 + \vert AU_h \vert^2 + \vert h_n \vert^2_{\Uc} \right) \, ds.
\end{multline}
for all $s \in [0, t]$. By the Gronwall lemma and the bounds in \eqref{eq:skeleton.weak.convergence} we get
\begin{equation}
	\label{eq:sk.compactness.gronwall}
	\Vert w^n(s) \Vert^2 \leq C_{h, \lbrace h_k \rbrace, u_0} \left( \int_0^s \Vert w^n \Vert^2 \, dr \right)^{1/2},
\end{equation}
where the constant $C_h$ depends only on $h$, the bound of $\lbrace h_k \rbrace_{k=1}^\infty$ in $L^2(0, t; \Uc)$ and $u_0$, and is independent of $n$. Recalling that $w^n \to 0$ in $L^2(0, t; V)$ by \eqref{eq:skeleton.weak.convergence}, we may pass to the limit in \eqref{eq:sk.compactness.gronwall} and obtain strong convergence in $C([0, t]; V)$. Strong convergence in $L^2(0, t; H^2)$ then follows immediately from \eqref{eq:sk.compactness.estimate}.

The measurable map $\Gc^0: C([0, t], \Uc_0) \to C([0, t], V) \cap L^2(0, t; D(A))$ is then defined by
\begin{equation}
	\label{eq:gc.0}
	\Gc^0(h) = \begin{cases}
		U_{\tilde{h}}, & \text{if} \ \tilde{h} = \int_0^\cdot h(s) \, ds \ \text{for some} \ h \in L^2(0, t; \Uc),\\
		0, & \text{otherwise},
	\end{cases}
\end{equation}
as in \cite{dong2017}.
\end{proof}

\subsection{Preliminary estimate}
\label{sect:ldp.preliminary}

Let $h_\varepsilon \in L^2(0, t; \Uc)$. The equation
\begin{equation}
	\label{eq:ldp.equation}
	d\Ue + \left[ A\Ue + B(\Ue, \Ue) + F(\Ue) \right] \, dt = \sigma(\Ue) h_\varepsilon \, dt + \sqrt{\varepsilon} \sigma(\Ue) \, dW, \quad \Ue(0) = u_0.
\end{equation}
will play an important role in the proof of Theorem \ref{thm:ldp}. In the paper \cite{dong2017} the existence and uniqueness of \eqref{eq:ldp.equation} is established by the means of the Girsanov theorem. However, for the argument in the proof of \cite[Theorem 1.1, Step 2]{dong2017} to go through as described one needs additional information of the behaviour of $A\Uhe$.

Existence of global pathwise solutions $\Ue$ essentially follows from \cite{brzezniakslavik}, which is inspired by the arguments of \cite{cao2007,debussche2011,debussche2012}. In order to avoid repetition we will only state the necessary preliminary result. The proof can be established following a series of estimates similar to the ones in \cite[Section 4]{brzezniakslavik}, see also Section \ref{sect:mdp.preliminary} for estimates of the same kind for a more complicated equation.

\begin{proposition}
\label{prop:ldp.V.estimate}
Let $U_{\varepsilon}$ be the solution of \eqref{eq:ldp.equation} and let $\tau_K^{U, \varepsilon}$ be the stopping time defined for $K > 0$ and $\varepsilon \in (0, \varepsilon_0]$ by
\[
	\tau_K^{U, \varepsilon, p} = \inf\left\lbrace s \geq 0 \mid \int_0^s \Vert U_\varepsilon \Vert^{p-2} \vert AU_\varepsilon \vert^2 \, dr \geq K \right\rbrace.
\]
Then $\tau_K^{U, \varepsilon} \to \infty$ $\Pb$-almost surely. Moreover, for $\tilde{t} > 0$ one has
\[
	\lim_{K \to \infty} \Pb \left( \left\lbrace \tau_K^{U, \varepsilon} \leq \tilde{t} \right\rbrace \right) = 0 \ \text{uniformly w.r.t.\ } \varepsilon \in (0, \varepsilon_0].
\]
\end{proposition}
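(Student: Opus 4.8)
The plan is to derive an energy estimate for $\Ue$ in $V$ uniform in $\varepsilon$, following the scheme of \cite[Section 4]{brzezniakslavik} but tracking carefully the extra drift term $\sigma(\Ue)h_\varepsilon$ coming from the control. First I would apply the It\^{o} lemma to $\Vert \Ue(s \wedge \tau) \Vert^p$ (or to $\Vert \Ue \Vert^2$ and then bootstrap), where $\tau$ is a localizing stopping time making all the terms integrable, to get
\begin{multline*}
	\Vert \Ue(s) \Vert^p + c \int_0^s \Vert \Ue \Vert^{p-2} \vert A\Ue \vert^2 \, dr \leq \Vert u_0 \Vert^p - p \int_0^s \Vert \Ue \Vert^{p-2} \left( B(\Ue), A\Ue \right) \, dr\\
	- p \int_0^s \Vert \Ue \Vert^{p-2} \left( F(\Ue), A\Ue \right) \, dr + p \int_0^s \Vert \Ue \Vert^{p-2} \left( \sigma(\Ue) h_\varepsilon, A\Ue \right) \, dr\\
	+ \text{(stochastic term)} + \tfrac{\varepsilon}{2} p(p-1) \int_0^s \Vert \Ue \Vert^{p-4} \Vert A^{1/2}\sigma(\Ue) \Vert_{L_2(\Uc,V)}^2 \Vert \Ue \Vert^2 \, dr.
\end{multline*}
The nonlinear term $B(\Ue)$ is handled by the estimate \eqref{eq:b.estimate2} together with the cancellation \eqref{eq:b.cancellation} and Young's inequality, absorbing a small multiple of $\int \Vert \Ue \Vert^{p-2} \vert A\Ue \vert^2$ into the left-hand side and leaving a term of the form $C \int \Vert \Ue \Vert^{p-2} \vert A\Ue \vert^{?} \Vert\Ue\Vert^{?}$ that is reorganized into $C\int \Vert\Ue\Vert^{p} (1 + \vert A\Ue\vert^2)$-type quantities amenable to a Gronwall argument; this is exactly the place where the definition of solution (via the stopping times of Definition \ref{def:solution}) and the integrability from Theorem \ref{thm:global.existence} are used to justify the manipulations. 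The deterministic forcing $F$ is controlled by \eqref{eq:F.bnd}. The genuinely new term is $\int_0^s \Vert \Ue \Vert^{p-2} (\sigma(\Ue) h_\varepsilon, A\Ue)\, dr$: I would bound it by $\frac{\eta}{4}\int_0^s \Vert\Ue\Vert^{p-2}\vert A\Ue\vert^2\, dr + C_\eta \int_0^s \Vert\Ue\Vert^{p-2}\Vert\sigma(\Ue)\Vert_{L_2(\Uc,H)}^2 \vert h_\varepsilon\vert_{\Uc}^2\, dr$, then use \eqref{eq:sigma.bnd.H} to get a factor $(1+\Vert\Ue\Vert^2)$, so that after absorbing, the surviving term is $C_\eta \int_0^s (1+\Vert\Ue\Vert^{p}) \vert h_\varepsilon\vert_{\Uc}^2\, dr$; here one uses crucially that $h_\varepsilon \in \Ac_M$, i.e.\ $\int_0^t \vert h_\varepsilon\vert_{\Uc}^2\, dr \leq M$ uniformly in $\varepsilon$, so $\vert h_\varepsilon(\cdot)\vert_{\Uc}^2$ is an $L^1(0,t)$ weight with uniformly bounded mass.

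After these absorptions the inequality has the schematic form
\[
	\Vert\Ue(s)\Vert^p + c\int_0^s \Vert\Ue\Vert^{p-2}\vert A\Ue\vert^2\, dr \leq \Vert u_0\Vert^p + C\int_0^s g_\varepsilon(r)\left(1 + \Vert\Ue(r)\Vert^p\right)\, dr + N^\varepsilon(s),
\]
where $g_\varepsilon(r) = C(1 + \vert h_\varepsilon(r)\vert_{\Uc}^2)$ has $\int_0^t g_\varepsilon \leq C(t + M)$ uniformly in $\varepsilon$, and $N^\varepsilon$ is the stochastic integral, a local martingale with quadratic variation controlled (after BDG via \eqref{eq:bdg}) by $\varepsilon$ times $\sup\Vert\Ue\Vert^{2p}$-type quantities plus $\int\Vert\Ue\Vert^{p-2}\vert A\Ue\vert^2$, hence a small multiple of the left side for $\varepsilon \leq \varepsilon_0$ small. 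At this point I would invoke a \emph{uniform stochastic Gronwall lemma} — the one advertised in the abstract — which for a family of adapted processes satisfying such an inequality with a uniformly integrable weight $g_\varepsilon$ yields a bound $\Eb\bigl[\sup_{s\le t}\Vert\Ue\Vert^p + \int_0^t \Vert\Ue\Vert^{p-2}\vert A\Ue\vert^2\bigr] \leq C(t, M, u_0, p)$ with $C$ \emph{independent of $\varepsilon$}. From this uniform moment bound the two claims follow: $\tau_K^{U,\varepsilon} \to \infty$ a.s.\ because the integrand $\Vert\Ue\Vert^{p-2}\vert A\Ue\vert^2$ is a.s.\ in $L^1(0,\infty)_{\mathrm{loc}}$ (indeed $\Ue$ is a global solution), and by Chebyshev, $\Pb(\tau_K^{U,\varepsilon} \leq \tilde t) = \Pb\bigl(\int_0^{\tilde t}\Vert\Ue\Vert^{p-2}\vert A\Ue\vert^2\, dr \geq K\bigr) \leq K^{-1}\Eb\int_0^{\tilde t}\Vert\Ue\Vert^{p-2}\vert A\Ue\vert^2\, dr \leq C(\tilde t, M, u_0, p)/K$, which tends to $0$ as $K\to\infty$ \emph{uniformly in $\varepsilon$} precisely because the right side does not depend on $\varepsilon$.

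The main obstacle is the uniformity in $\varepsilon$: a naive Gronwall argument would produce a constant of the form $e^{C\int_0^t g_\varepsilon}$ which, while finite for each $\varepsilon$, must be controlled uniformly — this is fine here since $\int_0^t g_\varepsilon \leq C(t+M)$ independently of $\varepsilon$, but the stochastic term requires more care because one cannot simply take expectations before the random time $\tau_K^{U,\varepsilon}$ without losing the a.s.\ statement. This is exactly why a genuinely \emph{stochastic} (martingale) version of Gronwall's lemma, stated uniformly over the family, is needed rather than the deterministic one; closing this step correctly, and verifying that the BDG-generated term carries the factor $\varepsilon$ so it can be absorbed for $\varepsilon_0$ small, is the technical heart of the proof and is precisely the point where \cite{dong2017} had the gap alluded to in the introduction.
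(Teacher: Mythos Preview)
Your overall strategy (It\^{o} formula for $\Vert\Ue\Vert^p$, control of the new drift $\sigma(\Ue)h_\varepsilon$ via the uniform $L^2$-bound on $h_\varepsilon$, and a uniform stochastic Gronwall argument followed by Chebyshev) is the right skeleton, and your treatment of the control term and the stochastic integral is correct. The genuine gap is in the step you sketch only vaguely: the nonlinear term $\left(B(\Ue), A\Ue\right)$ does \emph{not} close directly in a $V$-energy estimate for the 3D primitive equations. Using \eqref{eq:b.estimate2} you get
\[
	\left|\left(B(\Ue), A\Ue\right)\right| \leq C \Vert\Ue\Vert \, \vert A\Ue\vert^2,
\]
so after multiplying by $\Vert\Ue\Vert^{p-2}$ you are left with $C\Vert\Ue\Vert^{p-1}\vert A\Ue\vert^2$, which is one power of $\Vert\Ue\Vert$ too large to be absorbed into the dissipation $c\Vert\Ue\Vert^{p-2}\vert A\Ue\vert^2$, and cannot be turned into a Gronwall term either: your proposed reorganization into ``$C\int\Vert\Ue\Vert^p(1+\vert A\Ue\vert^2)$-type quantities'' makes the weight $\vert A\Ue\vert^2$, which is the unknown you are trying to bound. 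The cancellation \eqref{eq:b.cancellation} does not help here because the test function is $A\Ue$, not $\Ue$.

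The paper's route, to which it explicitly refers, is the Cao--Titi cascade: one first proves an $H$-estimate, then an $L^6$-bound on the baroclinic mode $\Rc v^\varepsilon$, then an $H^1(\Mc_0)$-bound on the barotropic mode $\Ac_2 v^\varepsilon$, then a $\partial_z$-bound, and only then the $V$-estimate. At that final stage one uses \eqref{eq:b.estimatel6} instead of \eqref{eq:b.estimate2}, which produces a right-hand side of the form
\[
	\eta\Vert\Ue\Vert^{p-2}\vert A\Ue\vert^2 + C_\eta \Vert\Ue\Vert^p\left(\vert v^\varepsilon\vert_{L^6}^4 + \vert\partial_z\Ue\vert^2\Vert\partial_z\Ue\Vert^2\right),
\]
and the weight in parentheses is \emph{already} controlled by the previous steps via the corresponding stopping times. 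The uniform stochastic Gronwall lemma of Proposition \ref{prop:stoch.Gronwall} is then applied at each level of the cascade, propagating the uniformity in $\varepsilon$; this is exactly what is carried out in detail for the more complicated MDP equation in Section \ref{sect:mdp.preliminary} (Propositions \ref{prop:mdp.weak}--\ref{prop:mdp.R}). Your argument would work for a nonlinearity satisfying $\vert(B(U),AU)\vert \leq C\Vert U\Vert^{\alpha}\vert AU\vert^{\beta}$ with $\beta < 2$, which is the 2D Navier--Stokes situation but not the 3D primitive equations.
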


\subsection{Proof of Theorem \ref{thm:ldp}}
\label{sect:ldp.proof}

We follow the proof presented in \cite[Theorem 1.1]{dong2017}. There are two minor technical issues in the original proof that deserve clarification. First, in \cite{dong2017} it is assumed that the solution of the stochastic 3D primitive equations has the regularity
\[
	U \in L^2 \left( \Omega; C\left( [0, t], V \right) \right) \cap L^2\left( \Omega; L^2\left( 0, t; D(A) \right) \right) \ \text{for all} \ t \geq 0,
\]
but the stronger integrability in $\Omega$ compared to Definition \ref{def:solution} has not been established. This regularity allows the authors to prove the convergence
\[
	Z^\varepsilon \to 0 \quad \text{in} \quad L^2\left(\Omega; C\left( [0, t], V \right) \cap L^2\left( 0, t; D(A) \right) \right),
\]
see \eqref{eq:ldp.z.definition} for the definition of $Z^\varepsilon$. This issue can be resolved by establishing the discussed convergence in probability, see \eqref{eq:ldp.z.convergence}. Second, after using the Skorokhod theorem the existence of the solution of the equation for $X_{\tilde{h}_\varepsilon}$, in the notation used here denoted by $R^\varepsilon$, see \eqref{eq:ldp.r.definition}, is not discussed. As we will see in Section \ref{sect:mdp.preliminary} if we were to establish the existence using the decomposition into the barotropic and baroclinic mode and similar estimates in \cite{cao2007}, we would need additional assumptions on regularity of the solution of $Z^\varepsilon$ which we do not have. This is resolved by using the Bensoussan trick from \cite{bensoussan1995}.

By Theorem \ref{thm:ldp.abstract} it suffices to check that the conditions specified in \eqref{eq:ldp.cond.1} and \eqref{eq:ldp.cond.2} are satisfied. The second condition \eqref{eq:ldp.cond.2} holds by Proposition \ref{prop:skeleton.convergence}. Regarding the first condition, let $M > 0$ and let $h_\varepsilon \in \Ac_M$ be such that $h_\varepsilon \to h$ as $\Tc_M$-valued random variables in law. Let
\[
	Y^\varepsilon = \Gc^{\varepsilon} \left( W(\cdot) + \frac{1}{\sqrt{\varepsilon}} \int_0^\cdot h_\varepsilon(s) \, ds \right).
\]
Our goal is to show that $\law(Y^\varepsilon) \to \law(U_h)$, where $U_h$ is the solution of \eqref{eq:skeleton}. By the Girsanov theorem $Y^\varepsilon$ solves the equation
\begin{equation*}
	dY^\varepsilon + \left[ AY^\varepsilon + B(Y^\varepsilon) + F(Y^\varepsilon) \right] \, dt = \sqrt{\varepsilon} \sigma(Y^\varepsilon) \, dW + \sigma(Y^\varepsilon) h_\varepsilon \, dt, \qquad Y^\varepsilon(0) = u.
\end{equation*}
Let $\tau^{Y, \varepsilon}_K$ be the stopping time defined for $K > 0$ by
\[
	\tau^{Y, \varepsilon}_K = \inf \left\lbrace s \geq 0 \mid \int_0^s \Vert Y^\varepsilon \Vert_{H^2}^2 \, dr \geq K \right\rbrace.
\]
An argument similar to the one in \cite[Section 4]{brzezniakslavik} or Section \ref{sect:mdp.preliminary} in this manuscript shows that
\begin{equation}
	\label{eq:ldp.tau.y.convergence}
	\tau^{Y, \varepsilon}_K \to \infty \ \text{as} \ K \to \infty \ \text{in probability uniformly w.r.t.\ } \varepsilon \in (0, \varepsilon_0].
\end{equation}
Let $Z^\varepsilon$ be the solution of
\begin{equation}
	\label{eq:ldp.z.definition}
	dZ^\varepsilon + AZ^\varepsilon \, dt = \sqrt{\varepsilon}\sigma(Y^\varepsilon) \, dW, \qquad Z^\varepsilon(0) = 0.
\end{equation}
By the It\^{o} lemma e.g.\ from \cite[Theorem A.1]{brzezniakslavik} and the bound \eqref{eq:sigma.bnd.V} on $\sigma$ in $L_2(\Uc, V)$ we get
\begin{equation}
	\label{eq:ldp.z.estimate}
	\Eb \left[ \sup_{s \in [0, t \wedge \tau_K^{U, \varepsilon}]} \Vert Z^\varepsilon \Vert^2 + \int_0^{t \wedge \tau_K^{U, \varepsilon}} \vert AZ^\varepsilon \vert^2 \, ds \right] \leq C_{t, M} \varepsilon \Eb\left[ \int_0^{t \wedge \tau_K^{U, \varepsilon}} 1 + \vert AU^\varepsilon \vert^2 \, ds \right].
\end{equation}
By the convergence \eqref{eq:ldp.tau.y.convergence} we have
\[
	Z^\varepsilon \in C\left( [0, t]; V \right) \cap L^2\left(0, t; D(A) \right), \quad \Pb\text{-a.s.\ for all} \ \varepsilon \in (0, \varepsilon_0] \ \text{and} \ K \geq 0.
\]
However, we do not know whether $Z^\varepsilon \in L^2(\Omega; C([0, t], V) \cap L^2(0, t; D(A)))$. Using the estimate \eqref{eq:ldp.z.estimate} and the bounds on $AY^\varepsilon$ from Proposition \ref{prop:ldp.V.estimate} we may employ an argument similar to the one in the proof of the uniform stochastic Gronwall lemma in Proposition \ref{prop:stoch.Gronwall} to show that $Z^\varepsilon \to 0$ in $X$ in probability as $\varepsilon \to 0+$, that is
\begin{equation}
	\label{eq:ldp.z.convergence}
	\lim_{\varepsilon \to 0+} \Pb\left( \left\lbrace \sup_{s \in [0, t]} \Vert Z^\varepsilon \Vert^2 + \int_0^t \vert AZ^\varepsilon \vert^2 \, ds \geq \eta \right\rbrace \right) = 0 \quad \text{for all} \ \eta > 0.
\end{equation}

Let us also define $R^\varepsilon = Y^\varepsilon - Z^\varepsilon$, that is $R^\varepsilon$ solves
\begin{equation}
	\label{eq:ldp.r.definition}
	dR^\varepsilon + \left[AR^\varepsilon + B(R^\varepsilon + Z^\varepsilon) + F(R^\varepsilon + Z^\varepsilon) \right] \, dt = \sigma(R^\varepsilon + Z^\varepsilon) h_\varepsilon \, dt, \qquad R^\varepsilon(0) = u.
\end{equation}

Since $h_\varepsilon \to h$ in law on the space $\Tc_m$ and $Z_\varepsilon \to 0$ in $X$ in probability by \eqref{eq:ldp.z.convergence}, one can use the Portmanteau lemma in Polish spaces, see e.g.\ \cite[Theorem 18.2.6]{fristedtgray}, to establish a version of \cite[Theorem 2.17(v)]{vandervaart}, that is that the convergences above imply $(h_\varepsilon, Z_\varepsilon) \to (h, 0)$ in law on the space $\Tc_M \times X$. By the Skorokhod representation theorem, see e.g.\ \cite[Theorem 2.4]{dpz}, there exists a probability space $( \tilde{\Omega}, \tilde{\Fc}, \tilde{\Pb} )$ and $\Tc_M \times \Tc_M \times X \times C([0, t], \Uc_0) \times C([0, t], \Uc_0)$-valued random variables $(\tilde{h}_\varepsilon, \tilde{h}, \tilde{Z}_\varepsilon, \tilde{W}_\varepsilon, \tilde{W})$ such that $\tilde{W}_\varepsilon$ and $\tilde{W}$ are cylindrical Wiener processes with reproducing kernel Hilbert space $\Uc$ and
\begin{equation}
	\label{eq:ldp.skorokhod}
	\law\left( h_\varepsilon, Z_\varepsilon, W \right) = \law\left( \tilde{h}_\varepsilon, \tilde{Z}_\varepsilon, \tilde{W}_\varepsilon \right), \quad \left( \tilde{Z}_\varepsilon, \tilde{h}_\varepsilon, \tilde{W}_\varepsilon \right) \to \left( 0, \tilde{h}, \tilde{W} \right) \ \Pb\text{-a.s.\ in} \ \left( X, \Tc_M, C\left([0, t], \Uc_0 \right)\right).
\end{equation}

Let us define
\[
	\tilde{Y}^\varepsilon = \Gc^\varepsilon\left( \tilde{W}(\cdot) + \frac{1}{\sqrt{\varepsilon}} \int_0^\cdot \tilde{h}_\varepsilon(s) \, ds \right).
\]
Using the Girsanov theorem again we observe that $\tilde{Y}^\varepsilon$ solves the equation
\[
	d\tilde{Y}^\varepsilon + \left[ A\tilde{Y}^\varepsilon + B(\tilde{Y}^\varepsilon) + F(\tilde{Y}^\varepsilon) \right] \, dt = \sqrt{\varepsilon} \sigma(\tilde{Y}^\varepsilon) \, d\tilde{W}_\varepsilon + \sigma(\tilde{Y}^\varepsilon) \tilde{h}_\varepsilon \, dt, \qquad \tilde{Y}^\varepsilon(0) = u.
\]
By \cite[Theorem 1.6]{qiao2010} we have
\begin{equation}
	\label{eq:ldp.yw.laws}
	\law\left( Y^\varepsilon, W \right)	= \law\left( \tilde{Y}^\varepsilon, \tilde{W}_\varepsilon \right).
\end{equation}

The equality of laws \eqref{eq:ldp.yw.laws} allows us to use a stopped version of the Bensoussan trick to show that $\tilde{Z}^\varepsilon$ solves the equation
\[
	d\tilde{Z}^\varepsilon + A\tilde{Z}^\varepsilon \, dt = \sqrt{\varepsilon}\sigma(\tilde{Y}^\varepsilon) \, d\tilde{W}_\varepsilon, \qquad \tilde{Z}^\varepsilon(0) = 0.
\]
Indeed, defining $\tilde{\Fc}_s^\varepsilon = \sigma( \mathds{1}_{[0, s]} \tilde{W}_\varepsilon, \mathds{1}_{[0, s]} \tilde{Y}^\varepsilon, \mathds{1}_{[0, s]} \tilde{Z}^\varepsilon)$, $\tilde{\Fc}_s = \sigma( \mathds{1}_{[0, s]} \tilde{W}, \mathds{1}_{[0, s]} \tilde{Y})$, $\tilde{\Fb}^\varepsilon = ( \tilde{\Fc}_t^\varepsilon )_{t \geq 0}$ and $\tilde{\Fb} = (\tilde{\Fc}_t)_{t \geq 0}$, we observe that $\tilde{W}_\varepsilon$, resp.\ $\tilde{W}$, is a cylindrical $\tilde{\Fb}^\varepsilon$-Wiener, resp.\ $\tilde{\Fb}$-Wiener, process. Let for $K \geq 0$
\begin{equation}
	\label{eq:tilde.tau.k}
	\tilde{\tau}_K^{Y, \varepsilon} = \inf \left\lbrace t \geq 0 \mid \int_0^t \Vert \tilde{Y}^\varepsilon \Vert_{H^2}^2 \, ds \geq K \right\rbrace. 
\end{equation}
Then since the map that maps a single trajectory in $L^2(0, T; H^2)$ to the hitting time defined in \eqref{eq:tilde.tau.k} is Borel measurable, $\tilde{\tau}_K^{Y, \varepsilon}$ is a random variable. By the debut theorem \cite[Theorem 50, p.\ 116]{dellacheriemeyer} $\tilde{\tau}_K^{Y, \varepsilon}$ is a $\tilde{\Fb}^\varepsilon$-stopping time. It remains to apply the Bensoussan trick from \cite[Section 4.3.4]{bensoussan1995} to the equation for $Z^\varepsilon_K = Z^\varepsilon(\cdot \wedge \tau_K^{Y, \varepsilon})$
\[
	dZ^\varepsilon_K + \mathds{1}_{[0, \tau_K^{Y, \varepsilon}]} AZ^\varepsilon_K \, dt = \mathds{1}_{[0, \tau_K^{Y, \varepsilon}]} \sqrt{\varepsilon}\sigma(Y^\varepsilon) \, dW, \qquad Z^\varepsilon(0) = 0,
\]
to show that $\tilde{Z}_K^\varepsilon = \tilde{Z}^\varepsilon(\cdot \wedge \tilde{\tau}^{Y, \varepsilon}_K)$ is a solution of
\[
	d\tilde{Z}^\varepsilon_K + \mathds{1}_{[0, \tilde{\tau}_K^{Y, \varepsilon}]} A\tilde{Z}^\varepsilon_K \, dt = \mathds{1}_{[0, \tilde{\tau}_K^{Y, \varepsilon}]} \sqrt{\varepsilon}\sigma(\tilde{Y}^\varepsilon) \, d\tilde{W}_\varepsilon, \qquad \tilde{Z}^\varepsilon(0) = 0,
\]
and take the limit $K \to \infty$.

Defining $\tilde{R}^\varepsilon = \tilde{Y}^\varepsilon - \tilde{Z}^\varepsilon$ we now observe that $\tilde{R}^\varepsilon$ satisfies
\[
	d\tilde{R}^\varepsilon + \left[A\tilde{R}^\varepsilon + B(\tilde{R}^\varepsilon + \tilde{Z}^\varepsilon) + F(\tilde{R}^\varepsilon + \tilde{Z}^\varepsilon) \right] \, dt = \sigma(\tilde{R}^\varepsilon + \tilde{Z}^\varepsilon) \tilde{h}_\varepsilon \, dt, \qquad \tilde{R}^\varepsilon(0) = u.
\]
Referring to the equality in laws in \eqref{eq:ldp.skorokhod} and \eqref{eq:ldp.yw.laws} we observe that
\begin{equation}
	\label{eq:ldp.r.laws}
	\law(\tilde{R}^\varepsilon) = \law(R^\varepsilon).
\end{equation}
By the $\tilde{\Pb}$-a.s.\ convergences in \eqref{eq:ldp.skorokhod} we deduce that $\tilde{R}^\varepsilon \to \tilde{R}^0$ $\tilde{\Pb}$-almost surely, where $\tilde{R}^0$ satisfies
\[
	d\tilde{R}^0 + \left[ A\tilde{R}^0 + B(\tilde{R}^0) + F(\tilde{R}^0) \right] \, dt = \sigma(\tilde{R}^0) \tilde{h} \, dt, \qquad \tilde{R}^0(0) = u,
\]
and therefore, recalling \eqref{eq:ldp.skorokhod} again, we have
\begin{equation}
	\label{eq:ldp.R.U.law}
	\law(\hat{R}^0) = \law(U_{\tilde{h}}) = \law(U_h).
\end{equation}

We are now ready to finalize the proof. By \eqref{eq:ldp.r.laws} we have $\law(Y^\varepsilon - Z_\varepsilon) = \law(\tilde{R}^\varepsilon)$. Using the convergence $\tilde{R}^\varepsilon \to \tilde{R}^0$ $\tilde{\Pb}$-almost surely and \eqref{eq:ldp.R.U.law} we have $\law(Y^\varepsilon - Z^\varepsilon) \to \law(U_h)$. Recalling $Z^\varepsilon \to 0$ in probability, we finally have $\law(Y^\varepsilon) \to \law(U_h)$. By Theorem \ref{thm:ldp.abstract} the LDP holds with the good rate function $I: C([0, t], V) \cap L^2(0, t; D(A)) \to \Rb$ given by
\begin{equation}
	\label{eq:ldp.rate.actual}
	I(U) = \inf \left\lbrace \frac12 \int_0^t \vert h \vert_{\Uc}^2 \, ds \mid h \in L^2(0, t; \Uc) \ \text{s.t.} \ U = \Gc^0\left(\int_0^\cdot h \, ds\right) \right\rbrace
\end{equation}
where $\Gc^0$ has been defined in \eqref{eq:gc.0}. \qed

\section{Moderate deviations principle}
\label{sect:mdp}

Let $u_0 \in V \cap H^2$ and let $U^0$ be the solution of the deterministic equation
\begin{equation}
	\label{eq:mdp.u.zero}
	dU^0 + \left[ AU^0 + B\left( U^0 \right) + F\left( U^0 \right) \right] \, dt = 0, \qquad U^0(0) = u_0.
\end{equation}
Let $R^\varepsilon$ be defined by
\begin{equation}
	\label{eq:r.varep.definition}
	R^\varepsilon = \frac{U^\varepsilon - U^0}{\sqrt{\varepsilon}\lambda(\varepsilon)}, \qquad \varepsilon \in (0, \varepsilon_0],
\end{equation}
where $U^\varepsilon$ is the solution of \eqref{eq:ldp.main} with the same initial condition $u_0$. It is straightforward to check that $R^\varepsilon$ satisfies the equation
\begin{multline}
	\label{eq:r.varep}
	dR^\varepsilon + \left[ AR^\varepsilon + B\left( R^\varepsilon, U^0 + \sqrt{\varepsilon} \lambda(\varepsilon) R^\varepsilon \right) + B\left(U^0, R^\varepsilon\right) + \Apr R^\varepsilon + E R^\varepsilon \right] \, dt\\
	= \lambda^{-1}(\varepsilon)\sigma\left(U^0 + \sqrt{\varepsilon} \lambda(\varepsilon) R^\varepsilon\right) \, dW, \quad R^\varepsilon(0) = 0.
\end{multline}
The existence of $R^\varepsilon$ follows directly from the definition \eqref{eq:r.varep.definition}. Similarly as for the LDP we need to establish certain estimates on $R^\varepsilon$ before we proceed to the proof of the main theorem. The process itself is more technically involved than the respective estimates for $U^\varepsilon$ and seems to require additional regularity of the deterministic solution $U^0$. The additional regularity will be discussed in detail in Section \ref{sect:mdp.preliminary}.

\subsection{Skeleton equation}
\label{sect:mdp.skeleton}

Let $h \in L^2(0, t; \Uc)$. The skeleton equation corresponding to \eqref{eq:r.varep} is the equation
\begin{multline}
	\label{eq:mdp.skeleton}
	dR_h + \left[ AR_h + B\left( R_h, U^0 \right) + B\left(U^0, R_h \right) + \Apr R_h + E R_h \right] \, dt\\
	= \sigma\left( U^0 \right) h \, dt, \quad R^\varepsilon(0) = 0.
\end{multline}

Following a similar argument as in Section \ref{sect:mdp.skeleton} we may establish the following equivalent of Proposition \ref{prop:skeleton.convergence}. The additional regularity of $R_h$ is not required in this case since the potentially problematic term with $\sigma$ does not play any role in the compactness argument.

\begin{proposition}
\label{prop:mdp.skeleton}
Let $\lbrace h_n \rbrace_{n=1}^\infty \subseteq \Tc_M$ and let $R^n = R_{h_n}$ be the respective solutions of \eqref{eq:mdp.skeleton}. Then there exist $h \in L^2(0, t; \Uc)$ and a subsequence (for simplicity not relabelled) such that $h_n \harp h$ in $L^2(0, t; \Uc)$ and $R^n \to R_h$ in $L^\infty(0, t; V) \cap L^2(0, t; D(A))$.

Moreover, there exists a measurable map $\Gc^0_R: C([0, t], \Uc_0) \to C([0, t], V) \cap L^2(0, t; D(A))$ such that $R_h = \Gc^0_R(\int_0^\cdot h_s \, ds)$.
\end{proposition}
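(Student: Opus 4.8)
The plan is to mirror the structure of the proof of Proposition~\ref{prop:skeleton.convergence}, exploiting the fact that the skeleton equation \eqref{eq:mdp.skeleton} is \emph{linear} in $R_h$ once $U^0$ and $h$ are fixed. First I would establish uniform a priori bounds: testing \eqref{eq:mdp.skeleton} by $R^n$ and using the cancellation $b(U^0, R^n, R^n) = 0$ from \eqref{eq:b.cancellation}, the bound \eqref{eq:b.estimate1} (or \eqref{eq:b.estimate3}) on $b(R^n, U^0, R^n)$ together with the known regularity $U^0 \in L^\infty(0,t;H^2)$ (recall $u_0 \in V \cap H^2$), the continuity of $\Apr$ and $E$, and the growth bound \eqref{eq:sigma.bnd.H} on $\sigma(U^0)h_n$, one gets via Gronwall that $\sup_{s\in[0,t]}\Vert R^n\Vert^2 + \int_0^t \vert AR^n\vert^2\,ds \leq C_{t,M,u_0}$ uniformly in $n$. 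Since $\{h_n\}\subseteq \Tc_M$ is bounded in $L^2(0,t;\Uc)$ and $R^n$ is bounded in $L^\infty(0,t;V)\cap L^2(0,t;D(A))$, with $\partial_t R^n$ bounded in some $L^2(0,t;X)$ for a suitable larger space $X$, an Aubin--Lions-type argument yields a subsequence with $h_n \harp h$ in $L^2(0,t;\Uc)$, $R^n \harpstar \Ut$ in $L^\infty(0,t;V)$, $R^n \harpstar \Ut$ in $L^2(0,t;D(A))$, and $R^n \to \Ut$ strongly in $L^2(0,t;V)$.

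Next I would identify the weak limit $\Ut$ with $R_h$. Fix $U^\sharp \in D(A)$ and pass to the limit term by term in the weak formulation: the $A$-term and the $F$-terms (here $\Apr R^n + E R^n$, both continuous and linear) converge by strong $L^2(0,t;V)$ convergence; the bilinear terms $B(R^n, U^0)$ and $B(U^0, R^n)$ converge using \eqref{eq:b.estimate1} with the fixed factor $U^0 \in L^\infty(H^2)$ and again strong $L^2(0,t;V)$ convergence of $R^n$; and the forcing $\sigma(U^0)h_n \to \sigma(U^0)h$ because $\sigma(U^0)\colon L^2(0,t;\Uc)\to L^2(0,t;H)$ is a fixed bounded linear operator and $h_n \harp h$. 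Continuity of $\Ut$ in $V$ follows from a Lions--Magenes maximal-regularity argument as cited in the excerpt. By uniqueness for the linear equation \eqref{eq:mdp.skeleton}, $\Ut = R_h$.

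Then I would upgrade to strong convergence in $C([0,t];V)\cap L^2(0,t;D(A))$. Setting $w^n = R^n - R_h$, which solves a linear equation with zero initial data, forcing $[\sigma(U^0)](h_n - h)$ and bilinear perturbations $B(w^n, U^0) + B(U^0, w^n)$, I apply the It\^o (here deterministic chain rule) lemma to $\Vert w^n\Vert^2$ as in the proof of \cite[Theorem~5.8]{dong2017}: using $b(U^0, w^n, w^n)=0$, the estimates \eqref{eq:b.estimate2}--\eqref{eq:b.estimate3} to absorb the $Aw^n$ terms into $\tfrac{\eta}{2}\int_0^s \vert Aw^n\vert^2$, and the boundedness of $U^0$ in $L^\infty(H^2)$, one arrives at an inequality of the form $\Vert w^n(s)\Vert^2 + \int_0^s \vert Aw^n\vert^2\,dr \leq C\big(\int_0^s \vert h_n - h\vert_\Uc^2\,dr\big)^{1/2} + C\int_0^s \Vert w^n\Vert^2(1 + \vert AU^0\vert^2)\,dr$; Gronwall then gives $\Vert w^n(s)\Vert^2 \leq C(\int_0^t \Vert w^n\Vert^2\,dr)^{1/2}$ for all $s$, and since $w^n \to 0$ in $L^2(0,t;V)$ the right-hand side tends to zero, yielding uniform convergence in $V$; strong $L^2(0,t;D(A))$ convergence follows from the same energy inequality. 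Finally, the measurable map $\Gc^0_R$ is defined exactly as in \eqref{eq:gc.0}, sending $\int_0^\cdot h(s)\,ds$ to $R_h$ and everything else to $0$; its measurability is inherited from the uniqueness and stability just established, as in \cite{dong2017}. The main obstacle, as in the LDP case, is obtaining the strong-convergence upgrade: the quadratic interaction between the low-regularity part $w^n$ and the non-linearity must be controlled, and this is where the $L^\infty(H^2)$-regularity of $U^0$ (guaranteed by \cite{caolititi2014} under the periodic lateral boundary condition) is essential — it is precisely the a priori bound that makes the Gronwall argument close.
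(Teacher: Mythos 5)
Your proposal follows the same route as the paper: the paper's own proof of Proposition \ref{prop:mdp.skeleton} consists precisely of the remark that one repeats the argument of Proposition \ref{prop:skeleton.convergence}, the point being that the control term is now $\sigma(U^0)h$ with $U^0$ fixed and regular, so no additional regularity of the skeleton solution itself is needed. You identify this correctly; your a priori bounds, the weak-limit identification (using that $\sigma(U^0)$ is a fixed bounded linear operator and $h_n \harp h$), the use of the $L^\infty(0,t;H^2)$ regularity of $U^0$ from \cite{caolititi2014}, and the definition of $\Gc^0_R$ as in \eqref{eq:gc.0} all match the paper.

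One step is incorrect as written, although the fix is exactly the paper's computation. In the strong-convergence upgrade you state the inequality
\[
	\Vert w^n(s)\Vert^2 + \int_0^s \vert Aw^n\vert^2\,dr \leq C\left(\int_0^s \vert h_n - h\vert_\Uc^2\,dr\right)^{1/2} + C\int_0^s \Vert w^n\Vert^2\left(1 + \vert AU^0\vert^2\right)dr
\]
and then assert that Gronwall yields $\Vert w^n(s)\Vert^2 \leq C\left(\int_0^t\Vert w^n\Vert^2\,dr\right)^{1/2}$. That deduction does not follow from the displayed inequality, and its first right-hand term does not tend to zero: $h_n \harp h$ only \emph{weakly} in $L^2(0,t;\Uc)$, so $\int_0^t\vert h_n-h\vert_\Uc^2\,dr$ is merely bounded (by $4M$), not small. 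The correct bookkeeping, as in the proof of Proposition \ref{prop:skeleton.convergence}, is to bound the forcing cross term by
\[
	\left|\int_0^s\left(A^{1/2}\sigma(U^0)\left[h_n-h\right], A^{1/2}w^n\right)dr\right| \le C\sup_{r\in[0,s]}\left(1+\Vert U^0\Vert_{H^2}\right)\left(\int_0^s\vert h_n-h\vert_\Uc^2\,dr\right)^{1/2}\left(\int_0^s\Vert w^n\Vert^2\,dr\right)^{1/2},
\]
absorb the $h$-factor and $\sup_r\left(1+\Vert U^0\Vert_{H^2}\right)$ (finite since $u_0\in V\cap H^2$) into the constant, and keep the factor $\left(\int_0^s\Vert w^n\Vert^2\,dr\right)^{1/2}$, which is the quantity that vanishes thanks to the strong $L^2(0,t;V)$ convergence already obtained. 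Gronwall then gives exactly the estimate you quote, and the argument closes; with this correction your proof is complete and coincides with the paper's.
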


\subsection{Preliminary estimates}
\label{sect:mdp.preliminary}

In the estimates in this subsection we require additional regularity of the solution $U^0$ of the deterministic equation \eqref{eq:mdp.u.zero}. Even though this is restrictive, it is not unusual to assume more regular initial data to obtain qualitative results on the primitive equations, see e.g.\ the mathematical justification of the hydrostatic limit in \cite{li2019}. To be more precise, it suffices to assume
\[
	U^0 \in C\left( [0, t], H^1 \right) \cap L^2 \left(0, t; H^2 \right) \cap L^\infty \left( 0, t; L^\infty \right), \quad \partial_z U^0 \in L^\infty\left( 0, t; H^1 \right) \cap L^2\left( 0, t; H^2 \right).
\]
Assuming $u_0 \in V \cap H^2$, by \cite[Proposition 2.1]{caolititi2014} there exists a unique global solution such that
\[
	U^0 \in C\left([0, t], H^2 \right) \cap L^2\left( 0, t; H^3 \right) \ \text{for all} \ t \geq 0.
\]
Let $\tau_K^0$ be the stopping time defined by
\begin{equation*}
	\begin{split}
	\tau_K^0 = \inf \Bigg\lbrace s \geq 0 \mid &\sup_{r \in [0, s]} \Vert U^0 \Vert + \sup_{r \in [0, s]} \vert U^0 \vert_{L^\infty} + \sup_{r \in [0, s]} \Vert \partial_z U^0 \Vert\\
	&+\int_{0}^s \Vert U^0 \Vert_{H^2}^2 + \Vert \partial_z U^0 \Vert_{H^2}^2 \, dr \geq K \Bigg\rbrace.
	\end{split}
\end{equation*}
Since $\tau_K^0$ is essentially deterministic and $U^0$ is a global solution, we observe
\begin{equation}
	\label{eq:mpd.tau.0.convergence}
	\lim_{K \to \infty} \Pb\left( \left\lbrace \tau_K^0 \leq \tilde{t} \right\rbrace \right) = 0 \ \text{for all} \ \tilde{t} \geq 0.
\end{equation}

\begin{proposition}
\label{prop:mdp.weak}
Let $\tau_K^{w, \varepsilon}$ be the stopping time defined for $K \geq 0$ and $\varepsilon \in (0, \varepsilon_0]$ by
\begin{equation*}
	\tau_K^{w, \varepsilon} = \inf \left\lbrace s \geq 0 \mid \int_0^s \vert \Re \vert^4 + \Vert \Re \Vert^2 + \vert \Re \vert^4 \Vert \Re \Vert^2\, dr \geq K \right\rbrace.
\end{equation*}
Then $\tau_K^{w, \varepsilon} \to \infty$ $\Pb$-almost surely as $K \to \infty$ for all $\varepsilon \in (0, \varepsilon_0]$. Moreover for all $\tilde{t} \geq 0$ one has
\[
	\lim_{K \to \infty} \Pb \left( \left\lbrace \tau_K^{w, \varepsilon} \leq \tilde{t} \right\rbrace \right) = 0 \ \text{uniformly w.r.t.\ } \varepsilon \in (0, \varepsilon_0].
\]
\end{proposition}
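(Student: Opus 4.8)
The plan is to prove the uniform-in-$\varepsilon$ tightness of the stopping times $\tau_K^{w,\varepsilon}$ by deriving a stochastic Gronwall-type estimate for $R^\varepsilon$ on a random time interval where the deterministic solution $U^0$ and the process $R^\varepsilon$ itself are both controlled, namely on $[0, t\wedge\tau_K^0\wedge\tau_K^{w,\varepsilon}]$. First I would apply the It\^o lemma to $\vert R^\varepsilon\vert^2$ and to $\vert R^\varepsilon\vert^4$ (i.e.\ to $f(\vert R^\varepsilon\vert^2)$ with $f(x)=x^2$), using equation \eqref{eq:r.varep}. The linear terms $\Apr R^\varepsilon$ and $ER^\varepsilon$ are harmless since $\Apr$ is bounded and $(ER^\varepsilon,R^\varepsilon)=0$; the term $B(U^0,R^\varepsilon)$ is handled by the cancellation \eqref{eq:b.cancellation} after noting $(B(U^0,R^\varepsilon),R^\varepsilon)=0$ whenever it makes sense; and the genuinely dangerous term is $B(R^\varepsilon, U^0+\sqrt{\varepsilon}\lambda(\varepsilon)R^\varepsilon) = B(R^\varepsilon,U^0) + \sqrt{\varepsilon}\lambda(\varepsilon)B(R^\varepsilon,R^\varepsilon)$. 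For $(B(R^\varepsilon,U^0),R^\varepsilon)$ I would use the estimate \eqref{eq:b.estimatel6} (or \eqref{eq:b.estimate3}) together with the $L^\infty$ and $\partial_z$-regularity of $U^0$ built into $\tau_K^0$, absorbing the top-order part into the viscous term $\vert A^{1/2}R^\varepsilon\vert^2 = \Vert R^\varepsilon\Vert^2$ (modulo lower-order norms) and leaving a factor that is integrable on $[0,\tau_K^0]$. For the cubic term $\sqrt{\varepsilon}\lambda(\varepsilon)(B(R^\varepsilon,R^\varepsilon),R^\varepsilon)$, which vanishes by \eqref{eq:b.cancellation} when tested against $R^\varepsilon$ itself (since $R^\varepsilon\in V$, $R^\varepsilon\in V\cap H^2$ on the stopped interval), the only surviving contribution comes from the $\vert R^\varepsilon\vert^4$ computation via $4\vert R^\varepsilon\vert^2(B(R^\varepsilon,R^\varepsilon),R^\varepsilon)=0$ as well, so in fact the nonlinearity contributes nothing problematic from self-interaction—this is exactly why the definition of $\tau_K^{w,\varepsilon}$ involves $\vert R^\varepsilon\vert^4$, $\Vert R^\varepsilon\Vert^2$ and $\vert R^\varepsilon\vert^4\Vert R^\varepsilon\Vert^2$.

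Next I would treat the stochastic term. Writing $\Sigma^\varepsilon = \lambda^{-1}(\varepsilon)\sigma(U^0+\sqrt{\varepsilon}\lambda(\varepsilon)R^\varepsilon)$, the It\^o correction in $d\vert R^\varepsilon\vert^2$ is $\Vert\Sigma^\varepsilon\Vert^2_{L_2(\Uc,H)}\,dt$, and by \eqref{eq:sigma.bnd.H}, the Lipschitz bound \eqref{eq:sigma.lip.H} and Young's inequality this is controlled by $C\lambda^{-2}(\varepsilon)(1+\vert U^0\vert^2) + C\vert R^\varepsilon\vert^2 + \eta_0\lambda^{-2}(\varepsilon)\Vert U^0+\sqrt{\varepsilon}\lambda(\varepsilon)R^\varepsilon\Vert^2$; using $\sqrt{\varepsilon}\lambda(\varepsilon)\to 0$, and for $\varepsilon_0$ small enough $\varepsilon\lambda^2(\varepsilon)\eta_0\le\frac12$, the $\Vert R^\varepsilon\Vert^2$ contribution is absorbed by the good viscous term. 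For the $\vert R^\varepsilon\vert^4$ equation one gets an extra $C\vert R^\varepsilon\vert^2\Vert\Sigma^\varepsilon\Vert^2$ which is handled the same way, producing $\vert R^\varepsilon\vert^4$ and $\vert R^\varepsilon\vert^4\Vert R^\varepsilon\Vert^2$-type terms and an $\varepsilon$-small remainder. The martingale part $\int_0^{\cdot}(R^\varepsilon,\Sigma^\varepsilon\,dW)$ and its $\vert R^\varepsilon\vert^2$-weighted analogue are controlled after taking a supremum by the Burkholder--Davis--Gundy inequality \eqref{eq:bdg} with $r=1$ (applied to the real-valued martingale, or equivalently $r=2$ for the $H$-valued stochastic integral), again absorbing a small multiple of $\mathbb{E}\sup\vert R^\varepsilon\vert^4$ into the left-hand side.

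Collecting everything, on the interval $[0,s\wedge\tau_K^0\wedge\tau_K^{w,\varepsilon}]$ I would arrive at an inequality of the schematic form
\begin{equation*}
	\mathbb{E}\Bigl[\sup_{r\le s\wedge\tau_K^0\wedge\tau_K^{w,\varepsilon}} (\vert R^\varepsilon\vert^2 + \vert R^\varepsilon\vert^4) + \int_0^{s\wedge\tau_K^0\wedge\tau_K^{w,\varepsilon}} (1+\vert R^\varepsilon\vert^2)\Vert R^\varepsilon\Vert^2\,dr\Bigr] \le C_{t,K,M}\,\varepsilon + C_{t,K}\int_0^s \mathbb{E}\bigl[\psi_K(r)(\vert R^\varepsilon\vert^2+\vert R^\varepsilon\vert^4)\bigr]\,dr,
\end{equation*}
where $\psi_K$ is an $L^1(0,t)$-function depending only on the $\tau_K^0$-bounds of $U^0$ (and on $M$ via $\vert h_\varepsilon\vert_{\Uc}^2$ if a control term is present; here there is none, so it is cleaner). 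The $\varepsilon$ on the right-hand side comes from the prefactors $\lambda^{-2}(\varepsilon)$ and $\varepsilon\lambda^2(\varepsilon)$ appearing in the estimates, using $\sqrt{\varepsilon}\lambda(\varepsilon)\to 0$. A Gronwall argument then yields $\mathbb{E}\int_0^{t\wedge\tau_K^0\wedge\tau_K^{w,\varepsilon}}(\vert R^\varepsilon\vert^4+\Vert R^\varepsilon\Vert^2+\vert R^\varepsilon\vert^4\Vert R^\varepsilon\Vert^2)\,dr \le C_{t,K,M}\,\varepsilon$, hence by Chebyshev $\Pb(\{\tau_K^{w,\varepsilon}\le t\wedge\tau_K^0,\ \int_0^{\tau_K^{w,\varepsilon}}(\cdots)\,dr = K\}) \le C_{t,K,M}\varepsilon/K$. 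Combining this with \eqref{eq:mpd.tau.0.convergence} (first pick $K$ large to make $\Pb(\tau_K^0\le t)$ small uniformly, then on $\{\tau_K^0 > t\}$ use the estimate), and then sending $\varepsilon\to 0$ or, more precisely, choosing $K$ large first and bounding $\Pb(\tau_K^{w,\varepsilon}\le\tilde t)\le\Pb(\tau_K^0\le\tilde t)+C_{\tilde t,K,M}\varepsilon/K$, gives the claimed uniform convergence; the almost-sure divergence $\tau_K^{w,\varepsilon}\to\infty$ for fixed $\varepsilon$ follows from the same estimate together with the already-established regularity of $R^\varepsilon$ from Theorem \ref{thm:global.existence}. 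The main obstacle is the bilinear term $(B(R^\varepsilon,U^0),R^\varepsilon)$ and its $\vert R^\varepsilon\vert^2$-weighted version: one must split $U^0$ via its barotropic/baroclinic parts or exploit the anisotropic estimates of Lemma \ref{lemma:anisotropic.estimates} and the $L^\infty_{t,x}$ and $\partial_z\in L^\infty_tH^1\cap L^2_tH^2$ regularity of $U^0$ encoded in $\tau_K^0$ to get a bound that is simultaneously (i) top-order absorbable into $\Vert R^\varepsilon\Vert^2$ and (ii) carries only an $L^1(0,t)$-in-time coefficient, so that the Gronwall constant stays finite; getting the weights to match the three quantities defining $\tau_K^{w,\varepsilon}$ exactly is the delicate bookkeeping.
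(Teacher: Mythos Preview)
Your overall strategy—It\^o formula for powers of $\vert R^\varepsilon\vert$, cancellation \eqref{eq:b.cancellation} for $B(U^0,R^\varepsilon)$ and the self-interaction, estimate the cross term $B(R^\varepsilon,U^0)$, BDG for the martingale, then a Gronwall on $[0,t\wedge\tau_K^0]$—is exactly what the paper does. Two points deserve correction.

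First, you over-engineer the bilinear estimate. No barotropic/baroclinic splitting or anisotropic Lemma~\ref{lemma:anisotropic.estimates} is needed here: the paper simply applies \eqref{eq:b.estimate3} to obtain
\[
p\vert R^\varepsilon\vert^{p-2}\bigl|(B(R^\varepsilon,U^0),R^\varepsilon)\bigr|
\le \eta\vert R^\varepsilon\vert^{p-2}\Vert R^\varepsilon\Vert^2 + C_\eta\vert R^\varepsilon\vert^{p}\Vert U^0\Vert^2\Vert U^0\Vert_{H^2}^2,
\]
which already has an $L^1(0,\tau_K^0)$ coefficient. Running the It\^o lemma once for general $p\ge 2$ (rather than $p=2$ and $p=4$ separately) covers all three quantities in $\tau_K^{w,\varepsilon}$ at once.

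Second, and more importantly, your final Chebyshev step is not quite right. The forcing in the Gronwall inequality carries a factor $\lambda^{-2}(\varepsilon)$, not $\varepsilon$; and since you need the conclusion \emph{uniformly} over $\varepsilon\in(0,\varepsilon_0]$, you may only use that this is bounded, not that it vanishes. After Gronwall on $[0,t\wedge\tau_M^0]$ you get a bound $C_{t,M}$ (the Gronwall constant grows with $M$, typically exponentially), so writing $\Pb(\tau_K^{w,\varepsilon}\le t)\le \Pb(\tau_K^0\le t)+C_{t,K}/K$ with the \emph{same} $K$ fails: $C_{t,K}/K$ need not tend to $0$. The fix is to decouple the two levels: localise $U^0$ by $\tau_M^0$, obtain $\mathbb{E}[\sup_{[0,t\wedge\tau_M^0]}\vert R^\varepsilon\vert^p+\int\vert R^\varepsilon\vert^{p-2}\Vert R^\varepsilon\Vert^2]\le C_{t,M}$ uniformly in $\varepsilon$, then bound
\[
\Pb(\tau_K^{w,\varepsilon}\le t)\le \Pb(\tau_M^0\le t)+C_{t,M}/K,
\]
choosing first $M$ large (via \eqref{eq:mpd.tau.0.convergence}) and then $K$ large. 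The paper packages exactly this two-parameter argument as the uniform stochastic Gronwall lemma, Proposition~\ref{prop:stoch.Gronwall}.
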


\begin{proof}
Let $p \geq 2$. Applying the It\^{o} lemma \cite[Theorem A.1]{brzezniakslavik} we obtain
\begin{multline*}
	d\vert \Re \vert^p + p \vert \Re \vert^{p-2} \left[ \Vert \Re \Vert^2 + \left( B\left(\Re, U^0\right), \Re \right) + \left( \Apr \Re + E \Re, \Re \right) \right] \, dt\\
	\leq p \lambda^{-1}(\varepsilon) \vert \Re \vert^{p-2} \left( \sigma(U^0 + \sqrt{\varepsilon} \lambda(\varepsilon) \Re) \, dW, \Re \right) + p \vert \Re \vert^p \, dt\\
	+ \frac{p(p-1)}{2} \lambda^{-2}(\varepsilon) \vert \Re \vert^{p-2} \Vert \sigma(U^0 + \sqrt{\varepsilon} \lambda(\varepsilon) \Re) \Vert_{L_2(\Uc, H)}^2 \, dt.
\end{multline*}
By the estimate \eqref{eq:b.estimate3} on $B$ we have
\begin{align*}
	p \vert \Re \vert^{p-2} \left| \left( B\left(\Re, U^0\right), \Re \right) \right| &\leq Cp \vert \Re \vert^{p-2} \Vert \Re \Vert \Vert U^0 \Vert^{1/2} \Vert U^0 \Vert_{H^2}^{1/2} \vert \Re \vert^{1/2} \Vert \Re \Vert^{1/2}\\
	&\leq \eta \vert \Re \vert^{p-2} \Vert \Re \Vert^2 + C_\eta \vert \Re \vert^p \Vert U^0 \Vert^2 \Vert U^0 \Vert_{H^2}^2
\end{align*}
for some $\eta > 0$ precisely determined later. Using the boundedness of the operators $\Apr$ and $E$ we get
\begin{equation*}
	p \vert \Re \vert^{p-2} \left| \left( \Apr \Re + E \Re, \Re \right) \right| \leq C \vert \Re \vert^{p-1} \Vert \Re \Vert \leq \eta \vert \Re \vert^{p-2} \Vert \Re \Vert^2 + C_\eta \vert \Re \vert^p.
\end{equation*}
By the bound \eqref{eq:sigma.bnd.H} on $\sigma$ in $L_2(\Uc, H)$ we have
\begin{multline*}
	\frac{p(p-1)}{2} \lambda^{-2}(\varepsilon) \vert \Re \vert^{p-2} \Vert \sigma(U^0 + \sqrt{\varepsilon} \lambda(\varepsilon) \Re) \Vert_{L_2(\Uc, H)}^2\\
	\leq C \lambda^{-2}(\varepsilon) \vert \Re \vert^{p-2} \left( 1 + \vert U^0 \vert^2 + \Vert U^0 \Vert^2 \right) + C \varepsilon \vert \Re \vert^p + p(p-1) \varepsilon \eta_0 \vert \Re \vert^{p-2} \vert \grad_3 \Re \vert^2.
\end{multline*}
Let $K \geq 0$, $N \in \Nb$ and let $\tau_a$ and $\tau_b$ be stopping times such that $0 \leq \tau_a \leq \tau_b \leq t \wedge \tau_K^0$. We estimate the stochastic integral using the Burkholder-Davis-Gundy inequality \eqref{eq:bdg} and the bound \eqref{eq:sigma.bnd.H} on $\sigma$ in $L_2(\Uc, H)$ as
\begin{align*}
	p \lambda^{-1}(\varepsilon) &\Eb \sup_{s \in [\tau_a, \tau_b]} \left| \int_{\tau_a}^s \vert \Re \vert^{p-2} \left( \sigma(U^0 + \sqrt{\varepsilon} \lambda(\varepsilon) \Re) \, dW, \Re \right) \right|\\
	&\leq C_{BDG} p \lambda^{-1} \Eb \left( \int_{\tau_a}^{\tau_b} \vert \Re \vert^{2p-2} \Vert \sigma(U^0 + \sqrt{\varepsilon} \lambda(\varepsilon) \Re) \Vert_{L_2(\Uc, H)}^2 \, dr \right)^{1/2}\\
	&\leq C \lambda^{-1}(\varepsilon) \Eb \left( \int_{\tau_a}^{\tau_b}  \vert \Re \vert^{2p-2} \left( 1 + \Vert U^0 \Vert^2 + \varepsilon \lambda^{2}(\varepsilon) \vert \Re \vert^2 \right) \, dr \right)^{1/2}\\
	&\hphantom{\leq \ } + C_{BDG} p \sqrt{2 \varepsilon \eta_0} \Eb \left( \int_{\tau_a}^{\tau_b} \vert \Re \vert^{2p-2} \Vert \Re \Vert^2 \, dr \right)^{1/2}\\
	&= I_1 + I_2.
\end{align*}
Using the Young inequality we obtain
\begin{align}
	\nonumber
	I_1 &\leq C \lambda^{-1}(\varepsilon) \Eb \left[ \sup_{s \in [\tau_a, \tau_b]} \vert \Re \vert^{p/2} \left( \int_{\tau_a}^{\tau_b} \vert \Re \vert^{p-2} \left( 1 + \Vert U^0 \Vert^2  + \varepsilon \lambda^2(\varepsilon) \vert \Re \vert^2 \right) \, dr \right)^{1/2} \right]\\
	\label{eq:w.i1}
	&\leq \eta \Eb \sup_{s \in [\tau_a, \tau_b]} \vert \Re \vert^p + C_\eta \lambda^{-2}(\varepsilon) \Eb \int_{\tau_a}^{\tau_b} \vert \Re \vert^{p-2} \left( 1 + \Vert U^0 \Vert^2  \right) \, dr + C_\eta \varepsilon \int_{\tau_a}^{\tau_b} \vert \Re \vert^p  \, dr,\\
	\nonumber
	I_2 &\leq C_{BDG} p \sqrt{2 \varepsilon \eta_0} \Eb \left[ \sup_{s \in [\tau_a, \tau_b]} \vert \Re \vert^{p/2} \left( \int_{\tau_a}^{\tau_b} \vert \Re \vert^{p-2} \Vert \Re \Vert^2 \, dr \right)^{1/2} \right]\\
	\label{eq:w.i2}
	&\leq (1-\delta) \Eb \sup_{s \in [\tau_a, \tau_b]} \vert \Re \vert^p + \frac{C^2_{BDG} p^2 \varepsilon \eta_0}{2(1-\delta)} \Eb \int_{\tau_a}^{\tau_b} \vert \Re \vert^{p-2} \Vert \Re \Vert^2 \, dr,
\end{align}
for some $\delta \in (0, 1)$. Collecting the estimates above, choosing $\delta$ and $\eta$ sufficiently small and assuming that $\eta_0$ or $\varepsilon_0$ are sufficiently small we observe
\begin{equation*}
	\Eb \left[ \sup_{s \in [\tau_a, \tau_b]} \vert \Re \vert^p + \int_{\tau_a}^{\tau_b} \vert \Re \vert^{p-2} \Vert \Re \Vert^2 \, dr \right] \leq C \Eb \int_{\tau_a}^{\tau_b} \left( 1 + \vert \Re \vert^p \right) \left( 1 + \Vert U^0 \Vert^2 \right) \, dr 
\end{equation*}
with the constant $C$ independent of $\varepsilon$. The claim then follows by the means of the uniform stochastic Gronwall lemma from Proposition \ref{prop:stoch.Gronwall}.
\end{proof}

In the rest of this Section let us denote the velocity and temperature part of $\Re$ by $\Re = (\upe, \Upe)$. Following the argument by Cao and Titi \cite{cao2007} we decompose the velocity part $\upe$ into its barotropic and baroclinic modes $\upe = \upeb + \upet$, where
\[
	\upeb = \Ac_2 \upe, \qquad \upet = \Rc \upe.
\]
Similarly as in \cite{cao2007} one can establish that the barotropic mode $\upeb$ satisfies the equation in $\Mc_0$
\begin{equation}
\label{eq:upeb}
	\begin{split}
	d\upeb &+ \bigg[ -\mu \laplace \upeb + \sqrt{\varepsilon} \lambda(\varepsilon) \left[ \left( \upeb \cdot \grad \right) \upeb + \Jc_1(\upe, \upe) \right]	+ \left( \upeb \cdot \grad \right) \vb^0 + \Jc_1(\upe, v^0)\\
	&\hphantom{+ \bigg[ \ } + \left( \vb^0 \cdot \grad \right) \upeb + \Jc_1(v^0, \upe) + \grad p_S^\varepsilon - \beta_T g \grad \Ac_2 \int_{z}^0 \Upe \, dz' + f \vec{k} \times \upeb \bigg] \, dt\\
	&= \lambda^{-1}(\varepsilon) \Ac_2 \sigma_1(U^0 + \sqrt{\varepsilon}\lambda(\varepsilon) \Re) \, dW_1,
	\end{split}
\end{equation}
where for sufficiently regular $u, v$ we denote
\begin{gather*}
	B_1(u, v) = \left(u \cdot \grad \right) v,\\
	\Jc_1(u, v) = \Ac_2\left[ \left( \tilde{u} \cdot \grad \right) \tilde{v} + \left( \div \tilde{u} \right) \tilde{v} \right],
\end{gather*}
with the periodic boundary conditions on $\Mc_0$ and
\[
	\div \upeb = 0, \qquad \upeb(0) = 0.
\]
The baroclinic mode $\upet$ solves the equation in $\Mc_0$
\begin{equation}
\label{eq:upet}
	\begin{split}
	d\upet &+ \bigg[ - \mu \laplace \upet - \nu \partial_{zz} \upet + \sqrt{\varepsilon} \lambda(\varepsilon) \left[ B_1(\upet, \upet) + \Jc_2(\upe, \upe) \right] + B_1(\upet, \vt^0)\\
	&\hphantom{+ \bigg[ \ } + \Jc_2(\upe, v^0) + B_1(\vt^0, \upet) + \Jc_2(v^0, \upe) - \beta_T g \grad \Rc \int^0_{z} \Upe \, dz' + f \vec{k} \times \upet \bigg] \, dt\\
	&= \lambda^{-1}(\varepsilon) \Rc \sigma_1(U^0 + \sqrt{\varepsilon} \lambda(\varepsilon) \Re) \, dW_1,
	\end{split}
\end{equation}
where for sufficiently regular $u, v$ we set
\begin{gather*}
	B_2(u, v) = B_1(u, v) + w(u) \partial_z v = \left( u \cdot \grad \right) v - \left( \int_{-h}^z \div u \, dz' \right) \partial_z v,\\
	\Jc_2(u, v) = \left( \tilde{u} \cdot \grad \right) \bar{v} -  \Jc_1(u, v) = \left( \tilde{u} \cdot \grad \right) \bar{v} - \Ac_3\left[ \left( \tilde{u} \cdot \grad \right) \tilde{v} + \left( \div \tilde{u} \right) \tilde{v} \right]
\end{gather*}
with the periodic boundary conditions on $\Gamma_l$ and
\[
	\partial_z \upet = 0 \ \text{on} \ \Gamma_i \cup \Gamma_b, \qquad \upet(0) = 0.
\]

\begin{proposition}
\label{prop:mdp.l6}
Let $K \geq 0$ and let $\tau_K^{6, \varepsilon}$ be the stopping time defined by
\begin{equation}
	\tau_K^{6, \varepsilon} = \inf \left\lbrace s \geq 0 \mid \int_0^s \vert \upet \vert^6_{L^6} +  \left( \int_{\Mc} \vert \upet \vert^4 \vert \grad_3 \upet \vert^2 \, d\Mc \right) \, dr \geq K \right\rbrace.
\end{equation}
Then $\tau_K^{6, \varepsilon} \to \infty$ $\Pb$-almost surely as $K \to \infty$ for all $\varepsilon \in (0, \varepsilon_0]$. Moreover for all $\tilde{t} \geq 0$ one has
\[
	\lim_{K \to \infty} \Pb \left( \left\lbrace \tau_K^{6, \varepsilon} \leq \tilde{t} \right\rbrace \right) = 0 \ \text{uniformly w.r.t.\ } \varepsilon \in (0, \varepsilon_0].
\]
\end{proposition}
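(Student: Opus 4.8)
The plan is to follow the scheme of the proof of Proposition~\ref{prop:mdp.weak}, now applying the It\^o formula to the functional $\phi(u)=|u|_{L^6}^6=\int_\Mc|u|^6\,d\Mc$ along the baroclinic equation~\eqref{eq:upet}, with all estimates carried out on random intervals $[\tau_a,\tau_b]$ satisfying $0\le\tau_a\le\tau_b\le t\wedge\tau_K^0\wedge\tau_K^{w,\varepsilon}$, so that the bounds on $U^0$ encoded in $\tau_K^0$ and the bounds on $\Re$ furnished by Proposition~\ref{prop:mdp.weak} are at our disposal; as in Proposition~\ref{prop:mdp.weak} (or via a Galerkin truncation) the It\^o formula is legitimate on these stopped intervals. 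The aim is the inequality
\begin{equation*}
	\Eb\left[\sup_{s\in[\tau_a,\tau_b]}|\upet|_{L^6}^6+\int_{\tau_a}^{\tau_b}\int_\Mc|\upet|^4|\grad_3\upet|^2\,d\Mc\,dr\right]\le C\,\Eb\int_{\tau_a}^{\tau_b}\big(1+|\upet|_{L^6}^6\big)(1+g)\,dr,
\end{equation*}
with $C$ independent of $\varepsilon$ and $g\ge0$ lying in $L^1(0,t)$ on $[0,\tau_K^0\wedge\tau_K^{w,\varepsilon}]$, built from $\|U^0\|^2$, $|U^0|_{L^\infty}^2$, $\|U^0\|_{H^2}^2$, $\|\partial_z U^0\|_{H^2}^2$ and from the quantities controlled by $\tau_K^{w,\varepsilon}$, in particular $\|\Re\|^2$. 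The uniform stochastic Gronwall lemma (Proposition~\ref{prop:stoch.Gronwall}) then gives a bound, uniform in $\varepsilon$, on $\Eb\big[\sup_{[0,\tilde t\wedge\tau_K^0\wedge\tau_K^{w,\varepsilon}]}|\upet|_{L^6}^6+\int_0^{\tilde t\wedge\tau_K^0\wedge\tau_K^{w,\varepsilon}}\int_\Mc|\upet|^4|\grad_3\upet|^2\,d\Mc\,dr\big]$; since $\int_0^s|\upet|_{L^6}^6\,dr\le s\,\sup_{[0,s]}|\upet|_{L^6}^6$, this controls both ingredients of $\tau_K^{6,\varepsilon}$, and the claim follows from Chebyshev's inequality together with \eqref{eq:mpd.tau.0.convergence} and Proposition~\ref{prop:mdp.weak}, exactly as before.

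The linear and stochastic contributions are handled by routine but careful computations. Testing $-\mu\laplace\upet-\nu\partial_{zz}\upet$ against $6|\upet|^4\upet$ and integrating by parts (the boundary terms on $\Gamma_i\cup\Gamma_b$ and $\Gamma_l$ vanish by the boundary conditions for $\upet$) produces the coercive term, bounded below by $c\int_\Mc|\upet|^4|\grad_3\upet|^2\,d\Mc$; the Coriolis term $f\vec k\times\upet$ vanishes pointwise when tested against $|\upet|^4\upet$; and the term with $\grad\Rc\int_z^0\Upe\,dz'$ is absorbed, using H\"older's inequality and Lemma~\ref{lemma:anisotropic.estimates}, into $\eta\int_\Mc|\upet|^4|\grad_3\upet|^2+\eta|\upet|_{L^6}^6+C_\eta\|\Re\|^2(1+|\upet|_{L^6}^6)$. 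For the It\^o correction one uses $\phi''(u)[g,g]=6\int_\Mc|u|^4|g|^2+24\int_\Mc|u|^2(u\cdot g)^2\le30\int_\Mc|u|^4|g|^2$, so the correction is at most $C\lambda^{-2}(\varepsilon)|\upet|_{L^6}^4\sum_k|\Rc\sigma_1(\cdot)e_k|_{L^6}^2$, and \eqref{eq:sigma1.l6} with $\Rc(v^0+\sqrt{\varepsilon}\lambda(\varepsilon)\upe)=\vt^0+\sqrt{\varepsilon}\lambda(\varepsilon)\upet$ bounds this by $C\lambda^{-2}(\varepsilon)|\upet|_{L^6}^4(1+|\vt^0|_{L^6}^2)+C\varepsilon|\upet|_{L^6}^6$, the first summand turning into $\delta|\upet|_{L^6}^6+C_\delta(1+|\vt^0|_{L^6}^6)$ after Young's inequality (recall $\lambda(\varepsilon)\ge1$ for $\varepsilon$ small), with $|\vt^0|_{L^6}^6\le C\|U^0\|^6$ absorbed into $g$. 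The stochastic integral is treated with the Burkholder--Davis--Gundy inequality~\eqref{eq:bdg} and \eqref{eq:sigma1.l6} precisely as the terms $I_1,I_2$ in the proof of Proposition~\ref{prop:mdp.weak}, splitting off a $(1-\delta)\,\Eb\sup_{[\tau_a,\tau_b]}|\upet|_{L^6}^6$ piece to be absorbed on the left and a harmless $\varepsilon_0$-small multiple of the dissipation.

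I expect the main obstacle to be the nonlinear part, namely the estimate of
\[
	\int_\Mc\Big(\sqrt{\varepsilon}\lambda(\varepsilon)\big[B_1(\upet,\upet)+\Jc_2(\upe,\upe)\big]+B_1(\upet,\vt^0)+\Jc_2(\upe,v^0)+B_1(\vt^0,\upet)+\Jc_2(v^0,\upe)\Big)\cdot|\upet|^4\upet\,d\Mc.
\]
Here I would follow the $L^6$-estimate of Cao--Titi \cite{cao2007} (and its stochastic adaptation in \cite{dong2017}): the genuinely quadratic-in-$\upet$ contribution carries the small prefactor $\sqrt{\varepsilon}\lambda(\varepsilon)$ and, after exploiting that the three-dimensional velocity field attached to the baroclinic mode is divergence free (whence $\int_\Mc[(\upet\cdot\grad)\upet+w(\upet)\partial_z\upet]\cdot|\upet|^4\upet\,d\Mc=0$, the boundary terms vanishing because $w(\upet)=0$ on $\Gamma_i\cup\Gamma_b$ and by periodicity), reduces to lower-order remainders bounded by $\eta\int_\Mc|\upet|^4|\grad_3\upet|^2+C_\eta\varepsilon\lambda^2(\varepsilon)|\upet|_{L^6}^6(1+\|\upeb\|^2)$ via the anisotropic estimates~\eqref{eq:anis.2}, \eqref{eq:anis.4}, \eqref{eq:anis.5} and Young's inequality, with $\|\upeb\|\le C\|\Re\|$ so that $\|\upeb\|^2$ enters $g$; the terms that are linear in $\upet$ and involve $v^0,\vt^0$ are bounded by $\eta\int_\Mc|\upet|^4|\grad_3\upet|^2+C_\eta\big(|U^0|_{L^\infty}^2+\|U^0\|_{H^2}^2+\|\partial_z U^0\|_{H^2}^2\big)(1+|\upet|_{L^6}^6)$; and the barotropic contributions entering through $\Jc_2$ are absorbed likewise. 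Collecting all estimates and choosing $\eta,\delta$ and $\varepsilon_0$ sufficiently small yields the differential inequality displayed above. The two delicate points are the bookkeeping which ensures that every trilinear term consumes the dissipation $\int_\Mc|\upet|^4|\grad_3\upet|^2$ only with an arbitrarily small coefficient --- this is exactly what the anisotropic estimates of Lemma~\ref{lemma:anisotropic.estimates} are designed for, the remaining mixed quantities such as $|\Re|^2\|\Re\|^2\le\|\Re\|^2+|\Re|^4\|\Re\|^2$ being precisely those controlled by $\tau_K^{w,\varepsilon}$ --- and checking that the coefficient $g$ is time-integrable on the stopped interval, which is immediate from the definitions of $\tau_K^0$ and $\tau_K^{w,\varepsilon}$.
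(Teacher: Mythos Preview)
Your proposal is correct and follows essentially the same route as the paper: apply the It\^o formula to $|\upet|_{L^6}^6$ along the baroclinic equation~\eqref{eq:upet}, exploit the cancellation $(B_2(u,v),|v|^4 v)=0$, control the remaining trilinear pieces (in particular those coming from $\Jc_2$) by anisotropic estimates so that the dissipation $\int_\Mc|\upet|^4|\grad_3\upet|^2\,d\Mc$ is consumed only with a small coefficient, handle the stochastic and It\^o correction terms via \eqref{eq:sigma1.l6} and BDG, and close with the uniform stochastic Gronwall lemma on $[\tau_a,\tau_b]\subset[0,t\wedge\tau_K^0\wedge\tau_K^{w,\varepsilon}]$. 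The paper carries out the $\Jc_2$-terms via an explicit integration-by-parts splitting $\Jc_2=\Jc_2^1+\Jc_2^2+\Jc_2^3$ (see \eqref{eq:j2split}) and the auxiliary inequalities~\eqref{eq:ct63}--\eqref{eq:ct65}, which is exactly the bookkeeping you allude to; the resulting Gronwall weight $\Phi$ involves $|\Re|^4\|\Re\|^2$, $|\Re|^4$, $\|\Re\|^2$ and powers of $\|U^0\|$, $\|\partial_z U^0\|$, all controlled by $\tau_K^{w,\varepsilon}$ and $\tau_K^0$ as you anticipate.
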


\begin{proof}
We will need the following estimates from \cite[p 256]{cao2007}. There exists $C > 0$ such that for all $f \in L^6$ satisfying $\grad_3 (f^3) \in L^2$ one has
\begin{align}
	\label{eq:ct63}
	\vert f \vert_{L^{12}_x L^6_z}^6 &\leq C \vert f \vert_{L^6}^3 \vert \grad_3 f^3 \vert_{L^2} + \vert f \vert_{L^6}^6,\\
	\label{eq:ct64}
	\vert f \vert_{L^8_x L^4_z}^4 &\leq C \vert f \vert^3_{L^6} \Vert f \Vert,\\
	\label{eq:ct65}
	\vert f \vert_{L^8_x L^2_z}^2 &\leq C \vert f \vert_{L^6}^{3/2} \Vert f \Vert^{1/2}.
\end{align}
The estimate \eqref{eq:ct63} follows directly from \eqref{eq:anis.1} with $q=4$, the others can be established using the Minkowski inequality similarly as in Lemma \ref{lemma:anisotropic.estimates}. 

Employing the It\^{o} lemma \cite[Theorem A.1]{brzezniakslavik} and using integration by parts and the cancellation property 
\begin{equation*}
	\left( B_2(u, v), \vert v \vert^q v \right) = 0, \qquad u \in V_1, v \in V_1 \cap H^2,
\end{equation*}
where $q \geq 0$ and $\vert v \vert$ denotes the modulus of $v$, we have
\begin{align*}
	d\vert \upet \vert^6_{L^6} &+ 6 \left( \mu \wedge \nu \right) \int_{\Mc} \vert \upet \vert^4 \vert \grad_3 \upet \vert^2 \, d\Mc \, dt\\
	&\leq 6 \left| \int_{\Mc} \vert \upet \vert^4 \upet \cdot \left( \beta_T g \grad \Rc \int_{z}^0 \Upe \, dz' + \sqrt{\varepsilon} \lambda(\varepsilon) \Jc_2(\upe, \upe) \right) \, d\Mc \right| \, dt\\
	&\hphantom{\leq \ } + 6 \left| \int_{\Mc} \vert \upet \vert^4 \upet \cdot \left( B_2(\upet, \vt^0) + \Jc_2(\upe, v^0) + \Jc_2(v^0, \upe)\right) \, d\Mc \right| \, dt\\
	&\hphantom{\leq \ } +6 \lambda^{-1} \sum_{k=1}^\infty \int_{\Mc} \vert \upet \vert^4 \upet \cdot \left( \Rc \sigma_1(U^0 + \sqrt{\varepsilon}\lambda(\varepsilon) R^\varepsilon) e_k \right) \, d\Mc \, dW_1^k\\
	&\hphantom{\leq \ } + 15 \lambda^{-2}(\varepsilon) \sum_{k=1}^\infty \int_{\Mc} \vert \upet \vert^4 \left| \Rc \sigma_1(U^0 + \sqrt{\varepsilon}\lambda(\varepsilon) R^\varepsilon) \right|^2 \, d\Mc \, dt\\
	&= I_1 \, dt + I_2 \, dt + \sum_{k = 1}^\infty I_3^k \, dW_1^k + \sum_{k = 1}^\infty I_4^k \, dt.
\end{align*}

The estimates of $I_1$ follow from \cite[Section 3.2]{cao2007}. Estimates of the stochastic terms $I_3^k$ and the It\^{o} correction terms $I^k_4$ are essentially the same as in \cite[Proposition 4.3]{brzezniakslavik}. The remaining term $I_2$ seems to require more work. We use and slightly refine ideas from \cite[Proposition 4.3]{debussche2012} with the anisotropic estimates from \cite{guillengonzalez2001}. 

Before we proceed to the estimates of the integrals above, let us make a short auxiliary estimate. Let $u, v, w \in D(A)$. Following the argument from \cite[Section 3.2]{cao2007} one may use integration by parts to establish
\[
	-\int_{\Mc} \vert \vt \vert^4 \vt \cdot \Jc_2(u, w) \, d\Mc = \int_{\Mc} \left( \div \tilde{u} \right) \vert \vt \vert^4 \vt \cdot \bar{w} + \left( \tilde{u} \cdot \grad \right) \left( \vert \vt \vert^4 \vt \right) \cdot \bar{w} - \partial_j \left( \vert \vt \vert^4 \vt \right) \overline{\tilde{u}_j \tilde{w}_k} \, d\Mc,
\]
which in turn leads to the estimate
\begin{align}
	\nonumber
	\bigg| \int_{\Mc} \vert \vt \vert^4 \vt &\cdot \Jc_2(u, w) \, d\Mc \bigg|\\
	\nonumber
	&\leq \int_{\Mc_0} \vert \bar{w} \vert \left( \int_{-h}^0 \vert \grad \tilde{u} \vert \vert \vt \vert^5 \, dz \right) \, d\Mc_0 + \int_{\Mc_0} \vert \bar{w} \vert \left( \int_{-h}^0 \vert \tilde{u} \vert \vert \grad \vt \vert \vert \vt \vert^4 \, dz \right) \, d\Mc_0\\
	\nonumber
	&\hphantom{\leq \ } + \int_{\Mc_0} \left( \int_{-h}^0 \vert \tilde{u} \vert \vert \tilde{w} \vert \, dz \right) \left( \int_{-h}^0 \vert \grad \vt \vert \vert \vt \vert^4 \, dz \right) \, d\Mc_0\\
	\label{eq:j2split}
	&= \Jc_2^1(v, u, w) + \Jc_2^2(v, u, w) + \Jc_2^3(v, u, w).
\end{align}

We split the integral $I_1$ into two terms as
\begin{align*}
	I_1 &\leq 6 \beta_T g \left| \int_{\Mc} \vert \upet \vert^4 \upet \cdot \grad \Rc \left( \int_{z}^0 \Upe \, dz' \right) \, d\Mc \right| + 6 \sqrt{\varepsilon} \lambda(\varepsilon) \left| \int_{\Mc} \vert \upet \vert^4 \upet \cdot \Jc_2(\upe, \upe) \, d\Mc \right|\\
	&= I_1^1 + I_1^2.
\end{align*}
Regarding $I_1^1$ we use integration by parts, the Ladyzhenskaya inequality and the estimate \eqref{eq:ct64} to obtain
\begin{equation}
	\label{eq:l6.i1.1}
	\begin{split}
	I_1^1 &= 6 \beta_T g \left| \int_{\Mc} \div\left( \vert \upet \vert^4 \upet \right) \Rc \left( \int_z^0 \Upe \, dz' \right) \, d\Mc \right|\\
	&\leq C \int_{\Mc_0} \overline{\vert \Upe \vert} \left( \int_{-h}^0 \vert \grad \upet \vert \vert \upet \vert^4 \, dz \right) \, d\Mc_0\\
	&\leq C \int_{\Mc_0} \overline{\vert \Upe \vert} \vert \grad (\upet)^3 \vert_{L^2_z} \vert \upet \vert_{L^4_z}^2 \, d\Mc_0\\
	&\leq C \vert \overline{\vert \Upe \vert} \vert_{L^4(\Mc_0)} \vert \grad_3 (\upet)^3 \vert \vert \upet \vert_{L^8_x L^4_z}^2\\
	&\leq C \left( \vert \overline{\vert \Upe \vert} \vert_{L^2(\Mc_0)}^{1/2} \vert \grad \overline{\vert \Upe \vert} \vert_{L^2(\Mc_0)}^{1/2} + \vert \overline{\vert \Upe \vert} \vert_{L^2(\Mc_0)} \right) \vert \grad_3 (\upet)^3 \vert \vert \upet \vert_{L^6}^{3/2} \Vert \upet \Vert^{1/2}\\
	&\leq \eta \vert \grad_3 (\upet)^3 \vert^2 + C_\eta \vert \upet \vert_{L^6}^6 \left( \vert \Re \vert^2 \Vert \Re \Vert^2 + \vert \Re \vert^4 \right) + C_\eta \Vert \Re \Vert^2.
	\end{split}
\end{equation}
We split the integral $I_1^2$ using \eqref{eq:j2split}. The first term can be estimated by the estimate \eqref{eq:ct63} and the Ladyzhenskaya inequality by
\begin{align*}
	\Jc_2^1(\upe, \upe, \upe) &\leq \int_{\Mc_0}\overline{\vert \upet \vert} \vert \grad_3 (\upet)^3 \vert_{L^2_z} \vert \upet \vert_{L^6_z}^3 \, d\Mc_0\\
	&\leq \vert \overline{\vert \upet \vert} \vert_{L^4(\Mc_0)} \vert \grad_3 (\upet)^3 \vert \vert \upet \vert_{L^{12}_x L^6_z}^3\\
	&\leq C \left( \vert \overline{\vert \upet \vert} \vert_{L^2(\Mc_0)}^{1/2} \vert \grad \overline{\vert \upet \vert} \vert_{L^2(\Mc_0)}^{1/2} + \vert \overline{\vert \upet \vert} \vert_{L^2(\Mc_0)} \right) \vert \grad_3 (\upet)^3 \vert\\
	&\hphantom{\leq \ }\cdot \left( \vert \upet \vert_{L^6}^{3/2} \vert \grad_3 (\upet)^3 \vert^{1/2} + \vert \upet \vert_{L^6}^3 \right)
\end{align*}
and therefore by the Young inequality we get
\begin{multline}
	6\sqrt{\varepsilon} \lambda(\varepsilon) \Jc_2^1(\upe, \upe, \upe)\\
	\leq \eta \vert \grad_3 (\upet)^3 \vert^2 + C_{\eta, \varepsilon} \vert \upet \vert_{L^6}^6 \left( \vert \Re \vert^2 \Vert \Re \Vert^2 + \vert \Re \vert \Vert \Re \Vert + \vert \Re \vert^4 + \vert \Re \vert^2 \right),
\end{multline}
where for fixed $\eta$ we have $C_{\eta, \varepsilon} \to 0$ as $\varepsilon \to 0+$. The second term in the splitting of $I^2_1$ is estimated in exactly the same way as the first one and therefore we have
\begin{equation}
	\Jc_2^2(\upe, \upe, \upe) \leq \eta \vert \grad_3 (\upet)^3 \vert^2 + C_{\eta, \varepsilon} \vert \upet \vert_{L^6}^6 \left( \vert \Re \vert^2 \Vert \Re \Vert^2 + \vert \Re \vert \Vert \Re \Vert + \vert \Re \vert^4 + \vert \Re \vert^2 \right)
\end{equation}
with $C_{\eta, \varepsilon}$ as above. The third term in the splitting of $I^2_1$ is can be bounded using the estimates \eqref{eq:ct64} and \eqref{eq:ct65} as
\begin{align*}
	\Jc_2^3(\upe, \upe, \upe) &\leq \int_{\Mc_0} \vert \upet \vert_{L^2_z}^2 \vert \grad_3 (\upet)^3 \vert_{L^2_z} \vert \upet \vert_{L^2_{4}}^2 \, d\Mc_0\\
	&\leq \vert \upet \vert_{L^8_x L^2_z}^2 \vert \grad_3 (\upet)^3 \vert \vert \upet \vert_{L^8_x L^4_z}^2\\
	&\leq C \vert \upet \vert_{L^6}^{3/2} \Vert \upet \Vert^{1/2} \vert \grad_3 (\upet)^3 \vert \vert \upet \vert_{L^6}^{3/2} \Vert \upet \Vert^{1/2}.
\end{align*}
Employing the Young inequality we get
\begin{equation}
	\label{eq:l6.i1.23}
	6\sqrt{\varepsilon} \lambda(\varepsilon) \Jc_2^1(\upe, \upe, \upe)\\
	\leq \eta \vert \grad_3 (\upet)^3 \vert^2 + C_{\eta, \varepsilon} \vert \upet \vert_{L^6}^6 \Vert \Re \Vert^2,
\end{equation}
where for fixed $\eta$ we have $C_{\eta, \varepsilon} \to 0$ as $\varepsilon \to 0+$. Collecting the estimates \eqref{eq:l6.i1.1}--\eqref{eq:l6.i1.23} and applying the Young inequality we get
\begin{equation}
	\label{eq:l6.i1}
	I_1 \leq \eta \vert \grad_3 (\upet)^3 \vert^2 + C_{\eta} \vert \upet \vert_{L^6}^6 \left( \vert \Re \vert^2 \Vert \Re \Vert^2 + \vert \Re \vert^4 + \Vert \Re \Vert^2 \right) + C_\eta \Vert \Re \Vert^2.
\end{equation}

Let us proceed to $I_2$. We split the integral as
\begin{align*}
	I_2 &\leq 6 \left| \int_{\Mc} \vert \upet \vert^4 \upet \cdot B_2(\upet, \vt^0) \, \Mc \right| + 6 \left| \int_{\Mc} \vert \upet \vert^4 \upet \cdot \Jc_2(\upe, v^0) \, \Mc \right|\\
	&\hphantom{\leq\ } + 6 \left| \int_{\Mc} \vert \upet \vert^4 \upet \cdot \Jc_2(v^0, \upe) \, \Mc \right|\\
	&= I_2^1 + I_2^2 + I_2^3.
\end{align*}
Let us estimate the term $I^1_2$ by
\[
	I^1_2 \leq 6 \left| \int_{\Mc} \vert \upet \vert^4 \upet \cdot \left[ \left( \upet \cdot \grad \right) v^0 \right] \, \Mc \right| + 6 \left| \int_{\Mc} \vert \upet \vert^4 \upet \cdot \left[ w(\upet) \partial_z v^0 \right] \, \Mc \right| = I_2^{11} + I_2^{12}.
\] 
We estimate $I_2^{11}$ using the Gagliardo-Nirenberg inequality as
\begin{equation}
	\label{eq:l6.i2.11}
	\begin{split}
			I_2^{11} &\leq C \int_{\Mc} \vert \grad v^0 \vert \vert \upet \vert^6 \, d\Mc \leq C \vert \grad v^0 \vert_{L^6} \vert \upet \vert_{L^{36/5}}^6 = C \vert \grad v^0 \vert_{L^6} \vert (\upet)^3 \vert_{L^{12/5}}^2\\
			&\leq C \vert \grad v^0 \vert_{L^6} \left( \vert \upet \vert_{L^6}^{3/2} \vert \grad_3 (\upet)^3 \vert^{1/2} + \vert \upet \vert_{L^6}^2\right)\\
			&\leq \eta \vert \grad_3 (\upet)^3 \vert^2 + C_\eta \vert \upet \vert_{L^6}^2 \left( \Vert v^0 \Vert_{H^2}^{4/3} + \Vert v^0 \Vert_{H^2} \right).
	\end{split}
\end{equation}
Regarding $I_2^{12}$ we proceed similarly as in the proof of \cite[Proposition 4.3]{debussche2012} and use integration by parts to obtain
\begin{align*}
	-\int_{\Mc} &\left( \int_{-h}^z \partial_j \upet_j \, dz' \right) \partial_z v^0_k (\upet_k)^5 \, d\Mc\\
	&=  \int_{\Mc} \left( \int_{-h}^z \upet_j \, dz' \right) \partial_j \partial_z v^0_k( \upet_k)^5 + 5 \int_{\Mc} \left( \int_{-h}^z \upet_j \, dz' \right) \partial_z v^0_k \partial_j \upet_k (\upet_k)^4 \, d\Mc\\
	&= I_2^{121} + I_2^{122}.
\end{align*}
Using the anisotropic estimate \eqref{eq:anis.4} and the embedding $L^q \hook L^q_xL^2_z$ with $q = 6$ we get
\begin{equation}
	\label{eq:l6.i2.121}
	\begin{split}
		I_2^{121} &\leq C \int_{\Mc_0} \left( \int_{-h}^0 \vert \upet_j \vert \, dz' \right) \left( \int_{-h}^0 \vert \partial_j \partial_z v^0_k \vert \vert \upet_k \vert^5 \, dz \right) \, d\Mc_0 \\
		&\leq C \int_{\Mc_0} \vert \upet_j \vert_{L^2_z} \vert \partial_j \partial_z v^0_k \vert_{L^2_z} \vert (\upet_k)^5 \vert_{L^2_z} \, d\Mc_0 \leq C \vert \upet \vert_{L^6_x L^2_z} \Vert \partial_z v^0 \Vert \vert (\upet)^5 \vert_{L^3_x L^2_z}\\
		&\leq C \vert \upet \vert_{L^6}^2 \Vert \partial_z v^0 \Vert \left( \vert \grad_3 (\upet)^3 \vert^{4/3} + \vert \upet \vert_{L^6}^4 \right)\\
		&\leq \eta \vert \grad_3 (\upet)^3 \vert^2 + C_\eta \vert \upet \vert_{L^6} \left( \Vert \partial_z v^0 \Vert^{3} + \Vert \partial_z v^0 \Vert \right).
	\end{split}
\end{equation}
Employing the anisotropic estimates \eqref{eq:anis.1} and \eqref{eq:anis.5} we have
\begin{equation}
	\label{eq:l6.i2.122}
	\begin{split}
		I_2^{122} &\leq C \int_{\Mc_0} \left( \int_{-h}^0 \vert \upet_j \vert \, dz' \right) \left( \int_{-h}^0 \vert \partial_z v^0_k \vert \vert (\upet_k)^4 \partial_j \upet_k \vert \, dz \right) \, d\Mc_0\\
		&\leq C \int_{\Mc_0} \vert \upet_j \vert_{L^2_z} \vert \partial_z v^0_k \vert_{L^6_z} \vert (\upet_k)^2 \partial_j \upet_k \vert_{L^2_z} \vert (\upet_k)^2 \vert_{L^2_z} \, d\Mc_0\\
		&\leq C \vert \upet \vert_{L^{12}_x L^2_z} \vert \partial_z v^0 \vert_{L^6} \vert \grad_3 (\upet)^3 \vert \vert (\upet)^2 \vert_{L^4_x L^3_z}\\
		&\leq C \left( \vert \upet \vert_{L^6}^2 \Vert \partial_z v^0 \Vert \vert \grad_3 (\upet)^3 \vert^{4/3} + \vert \upet \vert_{L^6}^{5/2} \Vert \partial_z v^0 \Vert \vert \grad_3 (\upet)^3 \vert^{7/6} + \vert \upet \vert_{L^6}^3 \Vert \partial_z v^0 \Vert \vert \grad_3 (\upet)^3 \vert \right)\\
		&\leq \eta \vert \grad_3 (\upet)^3 \vert^2 + C_{\eta} \vert \upet \vert_{L^6}^6 \left( \Vert \partial_z v^0 \Vert^{3} + \Vert \partial_z v^0 \Vert^2 \right) + C_\eta \vert v \vert_{L^6}^{35/8} \Vert \partial_z v^0 \Vert^{7/4}.
	\end{split}
\end{equation}
Collecting the estimates (\ref{eq:l6.i2.11}-\ref{eq:l6.i2.122}) we obtain
\begin{equation}
	\label{eq:l6.i2.1}
	I_2^1 \leq \eta \vert \grad_3 (\upet)^3 \vert^2 + C_\eta \left( 1 + \vert v \vert_{L^6}^6 \right) \left(1 + \Vert \partial_z v^0 \Vert^3 \right).
\end{equation}

Continuing with $I_2^2$ we recall the estimate \eqref{eq:j2split} and therefore
\[
	I_2^2 \leq 6 \left( \Jc_2^1(\upe, \upe, v^0) + \Jc_2^2(\upe, \upe, v^0) + \Jc_2^3(\upe, \upe, v^0) \right).
\]
By the H\"{o}lder inequality, the Ladyzhenskaya inequality and the anisotropic estimate \eqref{eq:ct63} and the Young inequality we have
\begin{equation}
	\label{eq:l6.i2.21}
	\begin{split}
		\Jc_2^1(\upet, \upet, v^0) &\leq C \int_{\Mc_0} \vert \vb^0 \vert \vert \grad_3 (\upet)^3 \vert_{L^2_z} \vert \upet \vert_{L^6_z}^3 \, d\Mc_0 \leq C \vert \vb^0 \vert_{L^4} \vert \grad_3 (\upet)^3 \vert \vert v \vert_{L^{12}_x L^6_z}^3\\
		&\leq C \vert \vb^0 \vert^{1/2}_{L^2(\Mc_0)} \Vert \vb^0 \Vert^{1/2}_{H^1(\Mc_0)} \vert \grad_3 (\upet)^3 \vert \vert \upet \vert_{L^6}^{3/2} \Vert (\upet)^3 \Vert^{1/2}\\
		&\leq \eta \vert \grad_3 (\upet)^3 \vert^2 + C_\eta \vert \upet \vert_{L^6}^6 \left( \vert U^0 \vert^2 \Vert U^0 \Vert^2 + \vert U^0 \vert \Vert U^0 \Vert \right).
	\end{split}
\end{equation}
The next term can be estimated in exactly the same manner as the above term, therefore we get
\begin{equation}
	\label{eq:l6.i2.22}
	\Jc_2^2(\upe, \upe, v^0) \leq \eta \vert \grad_3 (\upet)^3 \vert^2 + C_\eta \vert \upet \vert_{L^6}^6 \left( \vert U^0 \vert^2 \Vert U^0 \Vert^2 + \vert U^0 \vert \Vert U^0 \Vert \right).
\end{equation}
Regarding the final part of $I_2^2$ we employ the H\"{o}lder inequality, the anisotropic estimates \eqref{eq:ct64} and \eqref{eq:ct65} and the Sobolev embedding $W^{1, 2} \hook L^6$ to get
\begin{equation}
	\label{eq:l6.i2.23}
	\begin{split}
		\Jc_2^3(\upet, \upet, v^0) &\leq \int_{\Mc_0} \vert \upet \vert_{L^2_z} \vert \vt^0 \vert_{L^2_z} \vert \grad_3 (\upet)^3 \vert_{L^2_z} \vert \upet \vert_{L^4_z}^2 \, d\Mc_0\\
		&\leq \vert \upet \vert_{L^8_x L^2_z} \vert \vt^0 \vert_{L^8_x L^2_z} \vert \grad_3 (\upet)^3 \vert \vert \upet \vert_{L^8_x L^4_z}^2\\
		&\leq C \Vert U^0 \Vert \Vert \Re \Vert^{3/4} \vert \upet \vert_{L^6}^{9/4} \vert \grad_3 (\upet)^3 \vert\\
		&\leq \eta \vert \grad_3 (\upet)^3 \vert^2 + C_\eta \vert \upet \vert_{L^6}^{9/2} \Vert U^0 \Vert^2 \Vert \Re \Vert^{3/2}.
	\end{split}
\end{equation}
Collecting the estimates (\ref{eq:l6.i2.21}-\ref{eq:l6.i2.23}) we obtain
\begin{multline}
	\label{eq:l6.i2.2}
	I_2^2 \leq \eta \vert \grad_3 (\upet)^3 \vert^2 + C_\eta \vert \upet \vert_{L^6}^6 \left(1 + \vert U^0 \vert^2 \Vert U^0 \Vert^2 + \Vert \Re \Vert^2 + \Vert U^0 \Vert^8 \right)\\
	+ C_\eta \left(1 + \Vert \Re \Vert^2 + \Vert U^0 \Vert^8 \right).
\end{multline}

The remaining part of the term $I_2$, that is $I_2^3$, can be estimated using \eqref{eq:j2split} as
\[
	I_2^3 \leq 6 \left( \Jc_2^1(\upet, v^0, \upet) + \Jc_2^2(\upet, v^0, \upet) + \Jc_2^3(\upet, v^0, \upet) \right)
\]
By the H\"{o}lder inequality, the Gagliardo-Nirenberg inequality both in 2D and 3D and the aniso\-tropic estimate \eqref{eq:anis.1} for $\grad_3 \vt^0$ we obtain
\begin{equation}
	\label{eq:l6.i2.31}
	\begin{split}
		\Jc_2^1(\upe, v^0, \upe) &\leq \int_{\Mc_0} \vert \vb^\varepsilon \vert \vert \grad_3 \vt^0 \vert_{L^2_z} \vert \upet \vert_{L^{10}_z} \, d\Mc_0 \leq \vert \upeb \vert_{L^3(\Mc_0)} \vert \grad_3 \vt^0 \vert_{L^6_x L^2_z} \vert \upet \vert_{L^{10}}^3\\
		&\leq C \vert \Re \vert^{2/3} \Vert \Re \Vert^{1/3} \Vert U^0 \Vert^{1/3} \Vert U^0 \Vert_{H^2}^{2/3} \vert \upet \vert_{L^6}^2 \Vert (\upet)^3 \Vert\\
		&\leq \eta \vert \grad_3 (\upet)^3 \vert^2 + C_\eta \vert \upet \vert_{L^6}^4 \left( \vert \Re \vert^4 \Vert \Re \Vert^2 + \Vert U^0 \Vert \Vert U^0 \Vert_{H^2}^2 \right)\\
		&\hphantom{\leq \ } + C \vert \upet \vert_{L^6}^5 \left( \vert \Re \vert \Vert \Re \Vert^{1/2} + \Vert U^0 \Vert \Vert U^0 \Vert_{H^2}^2 \right).
	\end{split}
\end{equation}
Next we employ the H\"{o}lder inequality, the Gagliardo-Nierenberg inequality in 2D and the aniso\-tropic estimate \eqref{eq:ct63} and get
\begin{equation}
	\label{eq:l6.i2.32}
	\begin{split}
		\Jc_2^2(\upe, v^0, \upe) &\leq C \int_{\Mc_0} \vert \upeb \vert \vert \grad_3 (\upeb)^3 \vert_{L^2_z} \vert \upeb \vert_{L^6_z} \vert \vt^0 \vert_{L^6_z} \, d\Mc_0\\
		&\leq C \vert \upeb \vert_{L^6(\Mc_0)} \vert \grad_3 (\upet)^3 \vert \vert \upeb \vert_{L^{12}_x L^6_z}^2 \vert \vt^0 \vert_{L^6}\\
		&\leq C \vert \Re \vert^{1/3} \Vert \Re \Vert^{2/3} \vert \grad_3 (\upet)^3 \vert \vert \upet \vert_{L^6} \Vert (\upet)^3 \Vert^{1/3} \Vert U^0 \Vert\\
		&\leq \eta \vert \grad_3 (\upet)^3 \vert^2 + C_\eta \vert \upet \vert_{L^6}^3 \vert \Re \vert \Vert \Re \Vert^2 \Vert U^0 \Vert^3\\
		&\hphantom{\leq \ } + C_\eta \vert \upet \vert_{L^6}^4 \vert \Re \vert^{2/3} \Vert \Re \Vert^{4/3} \Vert U^0 \Vert^2.
	\end{split}
\end{equation}
The final part of $I_2^3$ is estimated in exactly the same way as $\Jc_2^3(\upet, \upet, v^0)$. Therefore we have
\begin{equation}
	\label{eq:l6.i2.33}
	\Jc_2^3(\upe, v^0, \upe) \leq \eta \vert \grad_3 (\upet)^3 \vert^2 + C_\eta \vert \upet \vert_{L^6}^{9/2} \Vert U^0 \Vert^2 \Vert \Re \Vert^{3/2}.
\end{equation}
Collecting the estimates (\ref{eq:l6.i2.31}-\ref{eq:l6.i2.33}) and using the Young inequality we deduce
\begin{multline}
	\label{eq:l6.i2.3}
	I_2^3 \leq \eta \vert \grad_3 (\upet)^3 \vert^2 + C_\eta \left( 1 + \vert \upet \vert_{L^6}^6 \right)\left(1 + \vert \Re \vert^4 \Vert \Re \Vert^2 + \Vert U^0 \Vert \Vert U^0 \Vert_{H^2}^2\right)\\
	 + C_\eta \left( 1 + \vert \upet \vert_{L^6}^6 \right)\left( \vert \Re \vert \Vert \Re \Vert^2 \Vert U^0 \Vert^3 + \Vert \Re \Vert^{2} + \Vert U^0 \Vert^6 \right)
\end{multline}
This finishes the estimates of $I_2$.

Let $K \geq 0$ and let $\tau_a$ and $\tau_b$ be stopping times such that $0 \leq \tau_a \leq \tau_b \leq t \wedge \tau_{K, \varepsilon}^w \wedge \tau^0_K$. Using the Burkholder-Davis-Gundy inequality \eqref{eq:bdg} and the bound \eqref{eq:sigma1.l6} on $\Rc \sigma_1$ in $L_2(\Uc, L^6)$ we get
\begin{equation}
	\label{eq:l6.i3}
	\begin{split}
		\Eb \sup_{s \in [\tau_a, \tau_b]} &\left| \int_{\tau_a}^s \sum_{k=1}^\infty I_3^k \, dW^k_1 \right|\\
		&\leq 6\lambda^{-1}(\varepsilon) C_{BDG} \Eb \left( \int_{\tau_a}^{\tau_b} \sum_{k=1}^\infty \left[ \int_{\Mc} \vert \upet \vert^5 \vert \Rc \sigma_1\left( U^0 + \sqrt{\varepsilon} \lambda(\varepsilon) \Re \right) e_k \vert \, d\Mc \right] \, ds \right)^{1/2}\\
		&\leq C\lambda^{-1}(\varepsilon) \Eb \left( \int_{\tau_a}^{\tau_b} \vert \upet \vert_{L^6}^{10} \sum_{k=1}^\infty \vert \Rc \sigma_1\left( U^0 + \sqrt{\varepsilon} \lambda(\varepsilon) \Re \right) e_k \vert_{L^6}^2 \, ds \right)^{1/2}\\
		&\leq C \lambda^{-1}(\varepsilon) \Eb \left( \int_{\tau_a}^{\tau_b} \vert \upet \vert_{L^6}^{10} \left( 1 + \vert \vt^0 \vert^2_{L^6} + \varepsilon \lambda^{-2}(\varepsilon) \vert \upet \vert_{L^6}^2 \right) \, ds \right)^{1/2}\\
		&\leq \eta \Eb \sup_{s \in [\tau_a, \tau_b]} \vert \upet \vert_{L^6}^6 + C_\eta \lambda^{-2}(\varepsilon) \Eb \int_{\tau_a}^{\tau_b} \left(1 + \vert \upet \vert_{L^6}^6 \right) \left( 1 + \Vert U^0 \Vert^2 \right) \, ds.
	\end{split}
\end{equation}

Regarding the It\^{o} correction term, from the bound \eqref{eq:sigma1.l6} on $\Rc \sigma_1$ in $L_2(\Uc, L^6)$ we deduce
\begin{equation}
	\label{eq:l6.i4}
	\begin{split}
		\sum_{k=1}^\infty I_4^k &\leq C \lambda^{-2}(\varepsilon) \vert \upet \vert_{L^6}^4 \sum_{k=1}^\infty \vert \Rc \sigma_1\left( U^0 + \sqrt{\varepsilon} \lambda(\varepsilon) \Re \right) e_k \vert_{L^6}^2\\
		&\leq C \lambda^{-2}(\varepsilon) \left(1 + \vert \upet \vert_{L^6}^6 \right) \left( 1 + \Vert U^0 \Vert^2 \right).
	\end{split}
\end{equation}

Finally, choosing $\eta > 0$ sufficiently small we use the estimates \eqref{eq:l6.i1}, \eqref{eq:l6.i2.1}, \eqref{eq:l6.i2.2}, \eqref{eq:l6.i2.3}, \eqref{eq:l6.i3} and \eqref{eq:l6.i4} to infer
\begin{align*}
	\Eb \left[ \sup_{s \in [\tau_a, \tau_b]} \vert \upet \vert^6 + \int_{\tau_a}^{\tau_b} \vert \grad_3 (\upet)^3 \vert^2 \, ds \right] \leq C \Eb \int_{\tau_a}^{\tau_b} \left(1 + \vert \upet \vert_{L^6}^6 \right) \Phi(s) \, ds,
\end{align*}
where
\begin{equation*}
	\Phi(s) = 1 + \vert \Re \vert^4 \Vert \Re \Vert^2 + \vert \Re \vert^4 + \Vert \Re \Vert^2 + \Vert \partial_z U^0 \Vert^3 + \Vert U^0 \Vert^8 + \vert \Re \vert \Vert \Re \Vert^2 \Vert U^0 \Vert^3.
\end{equation*}
We observe that the definitions of the stopping times $\tau_K^0$ and $\tau_K^{w, \varepsilon}$ and the "uniform" convergence \eqref{eq:mpd.tau.0.convergence} justify the use of the uniform stochastic Gronwall lemma from Proposition \ref{prop:stoch.Gronwall}, which finishes the proof. 
\end{proof}

\begin{proposition}
\label{prop:mdp.grad.vb}
Let $K \geq 0$ and let $\tau_K^{\grad, \varepsilon}$ be the stopping time defined by
\begin{equation*}
	\tau_K^{\grad, \varepsilon} = \inf \left\lbrace s \geq 0 \mid \int_0^s \Vert \upeb \Vert_{H^1(\Mc_0)}^4 \, dr \geq K \right\rbrace.
\end{equation*}
Then $\tau_K^{\grad, \varepsilon} \to \infty$ $\Pb$-almost surely as $K \to \infty$ for all $\varepsilon \in (0, \varepsilon_0]$. Moreover for all $\tilde{t} \geq 0$ one has
\[
	\lim_{K \to \infty} \Pb \left( \left\lbrace \tau_K^{\grad, \varepsilon} \leq \tilde{t} \right\rbrace \right) = 0 \ \text{uniformly w.r.t.\ } \varepsilon \in (0, \varepsilon_0].
\]
\end{proposition}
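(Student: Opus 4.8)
The plan is to follow the template of the proofs of Propositions \ref{prop:mdp.weak} and \ref{prop:mdp.l6}. I would apply the It\^o lemma \cite[Theorem A.1]{brzezniakslavik} to $\vert \upeb \vert_{\Ho}^2$ and to $\vert A_S^{1/2} \upeb \vert_{\Ho}^2$, that is, test the barotropic equation \eqref{eq:upeb} with $\upeb$ and with $A_S \upeb$, and estimate the right-hand side so as to absorb every occurrence of $\vert A_S \upeb \vert_{\Ho}^2$ into the dissipation. Using $\Vert \upeb \Vert_{H^1(\Mc_0)}^2 \le C ( \vert \upeb \vert_{\Ho}^2 + \vert A_S^{1/2} \upeb \vert_{\Ho}^2 )$, this should lead, for arbitrary stopping times $0 \le \tau_a \le \tau_b \le t \wedge \tau_K^0 \wedge \tau_K^{w, \varepsilon} \wedge \tau_K^{6, \varepsilon}$, to an inequality of the form
\begin{equation*}
	\Eb \left[ \sup_{s \in [\tau_a, \tau_b]} \Vert \upeb \Vert_{H^1(\Mc_0)}^2 + \int_{\tau_a}^{\tau_b} \vert A_S \upeb \vert_{\Ho}^2 \, dr \right] \le C \, \Eb \int_{\tau_a}^{\tau_b} \left( 1 + \Vert \upeb \Vert_{H^1(\Mc_0)}^2 \right) \Phi(s) \, ds ,
\end{equation*}
with $C$ independent of $\varepsilon$ and $\Phi$ built from quantities whose time integrals up to $\tilde t$ are controlled by $K$ on $\{ \tau_K^0 \wedge \tau_K^{w, \varepsilon} \wedge \tau_K^{6, \varepsilon} > \tilde t \}$, namely $\Vert U^0 \Vert^2$, $\Vert U^0 \Vert_{H^2}^2$, $\Vert \partial_z U^0 \Vert^2$, $\vert U^0 \vert_{L^\infty}$ (via $\tau_K^0$), $\vert \Re \vert^4$, $\Vert \Re \Vert^2$, $\vert \Re \vert^4 \Vert \Re \Vert^2$ (via $\tau_K^{w, \varepsilon}$), and $\vert \upet \vert_{L^6}^6$, $\int_\Mc \vert \upet \vert^4 \vert \grad_3 \upet \vert^2 \, d\Mc$ (via $\tau_K^{6, \varepsilon}$). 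The uniform stochastic Gronwall lemma, Proposition \ref{prop:stoch.Gronwall}, combined with \eqref{eq:mpd.tau.0.convergence} and Propositions \ref{prop:mdp.weak}, \ref{prop:mdp.l6}, then gives a bound on $\Eb \sup_{[0, \tilde t \wedge \tau_K]} \Vert \upeb \Vert_{H^1(\Mc_0)}^2$ uniform in $\varepsilon$, where $\tau_K = \tau_K^0 \wedge \tau_K^{w, \varepsilon} \wedge \tau_K^{6, \varepsilon}$; since $\int_0^{\tilde t} \Vert \upeb \Vert_{H^1(\Mc_0)}^2 \, dr \le C \int_0^{\tilde t} \Vert \Re \Vert^2 \, dr \le C K$ on $\{ \tau_K > \tilde t \}$ (the vertical average $\Ac_2$ being bounded from $H^1(\Mc)$ to $H^1(\Mc_0)$), one has $\int_0^{\tilde t} \Vert \upeb \Vert_{H^1(\Mc_0)}^4 \, dr \le C K \sup_{[0, \tilde t \wedge \tau_K]} \Vert \upeb \Vert_{H^1(\Mc_0)}^2$ on that event, whence the uniform convergence $\tau_K^{\grad, \varepsilon} \to \infty$ follows by first choosing $K$ large using \eqref{eq:mpd.tau.0.convergence} and Propositions \ref{prop:mdp.weak}, \ref{prop:mdp.l6}, and then invoking the Chebyshev inequality.

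Most of the individual terms are routine. The surface pressure $\grad p_S^\varepsilon$ disappears against the divergence-free $\upeb$ and $A_S \upeb$; the Coriolis term and the buoyancy term $\beta_T g \grad \Ac_2 \int_z^0 \Upe \, dz'$ are dominated by $\eta \vert A_S \upeb \vert_{\Ho}^2 + C_\eta ( \vert \Re \vert^2 + \Vert \Re \Vert^2 )$ using boundedness of $\Ac_2$ and $\vert \grad \Ac_2 \int_z^0 \Upe \vert_{\Ho} \le C \Vert \Upe \Vert$. The classical two-dimensional enstrophy contribution of $\sqrt{\varepsilon} \lambda(\varepsilon) ( \upeb \cdot \grad ) \upeb$, after the Ladyzhenskaya and Young inequalities, has the shape $\eta \vert A_S \upeb \vert_{\Ho}^2 + C_\eta \varepsilon^2 \lambda^4(\varepsilon) \vert \upeb \vert_{\Ho}^2 \vert \grad \upeb \vert_{\Ho}^4$, and one rewrites $\vert \upeb \vert_{\Ho}^2 \vert \grad \upeb \vert_{\Ho}^4 \le C ( \vert \Re \vert^4 \Vert \Re \Vert^2 + \Vert \Re \Vert^2 ) \Vert \upeb \Vert_{H^1(\Mc_0)}^2$, so that the coefficient enters $\Phi$ and is integrable precisely because $\tau_K^{w, \varepsilon}$ was defined with those three integrands. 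The interaction terms carrying $U^0$, e.g.\ $( \upeb \cdot \grad ) \vb^0$ and $( \vb^0 \cdot \grad ) \upeb$, produce coefficients of the type $( 1 + \Vert U^0 \Vert_{H^2}^2 ) \Vert \upeb \Vert_{H^1(\Mc_0)}^2$, integrable via $\tau_K^0$. Finally, the stochastic term $\lambda^{-1}(\varepsilon) \Ac_2 \sigma_1( U^0 + \sqrt{\varepsilon} \lambda(\varepsilon) \Re ) \, dW_1$ is handled exactly as in Proposition \ref{prop:mdp.weak}: the Burkholder--Davis--Gundy inequality \eqref{eq:bdg} and the structural assumption \eqref{eq:sigma.vb} on $\Ac_2 \sigma_1$ in $L_2(\Uc, \Vo)$ yield, for $\eta_2$ or $\varepsilon_0$ small, an absorbable multiple of $\vert A_S \upeb \vert_{\Ho}^2$ plus $C \lambda^{-2}(\varepsilon) ( 1 + \Vert U^0 \Vert^2 ) + C \varepsilon \Vert \Re \Vert^2$, and the It\^o correction term is dealt with likewise.

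The main obstacle is the baroclinic--baroclinic interaction $\sqrt{\varepsilon} \lambda(\varepsilon) \Jc_1(\upe, \upe) = \sqrt{\varepsilon} \lambda(\varepsilon) \Ac_2[ ( \upet \cdot \grad ) \upet + ( \div \upet ) \upet ]$ together with the cross terms $\Jc_1(\upe, v^0)$ and $\Jc_1(v^0, \upe)$, tested against $A_S \upeb$. Estimating $\vert ( \Jc_1(\upe, \upe), A_S \upeb ) \vert \le \vert A_S \upeb \vert_{\Ho} \, \big\vert \Ac_2[ ( \upet \cdot \grad ) \upet + ( \div \upet ) \upet ] \big\vert_{L^2(\Mc_0)}$ requires bounding $\int_{\Mc_0} \big( \int_{-h}^0 \vert \upet \vert \vert \grad_3 \upet \vert \, dz \big)^2 \, d\Mc_0$, and controlling this (and its analogues with one copy of $\upet$ replaced by $\vt^0$) calls for the anisotropic Ladyzhenskaya-type estimates of Cao--Titi \cite{cao2007} and Guill\'en-Gonz\'alez \cite{guillengonzalez2001}, in the spirit of \eqref{eq:ct63}--\eqref{eq:ct65} and Lemma \ref{lemma:anisotropic.estimates}, expressed through $\vert \upet \vert_{L^6}$, $\vert \grad_3 (\upet)^3 \vert$ and $\Vert \upet \Vert$; after Young's inequality the pieces containing $\vert \grad_3 (\upet)^3 \vert^2 \sim \int_\Mc \vert \upet \vert^4 \vert \grad_3 \upet \vert^2 \, d\Mc$ and $\vert \upet \vert_{L^6}^6$ land in $\Phi$, where $\tau_K^{6, \varepsilon}$ (and $\tau_K^0$ for the parts carrying $U^0$) renders them integrable, while the genuinely quadratic-in-$\Re$ contributions retain the small prefactor $\sqrt{\varepsilon} \lambda(\varepsilon)$ (or are absorbed by taking $\varepsilon_0$ small), just as the constants $C_{\eta, \varepsilon} \to 0$ arise in the proof of Proposition \ref{prop:mdp.l6}. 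The delicate point is to arrange all these anisotropic estimates so that they close with only the three stopping times $\tau_K^0$, $\tau_K^{w, \varepsilon}$, $\tau_K^{6, \varepsilon}$ available --- never producing a coefficient involving $\vert A_S \upeb \vert_{\Ho}$, $\Vert U^0 \Vert_{H^2}$ or $\vert \grad_3 \upet \vert$ to a power that can be neither absorbed nor time-integrated.
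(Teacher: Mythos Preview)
Your approach is correct but differs from the paper's in one structural respect. The paper applies the It\^{o} lemma directly with the functional $\vert A_S^{1/2} P_{\Hbar} \cdot \vert^4$, i.e.\ with exponent $p=4$, obtaining
\[
	d\vert \grad \upeb \vert^4 + 4\mu \vert \grad \upeb \vert^2 \vert \laplace \upeb \vert^2 \, dt \leq \ldots
\]
and then closes in a single application of the uniform stochastic Gronwall lemma (Proposition~\ref{prop:stoch.Gronwall}), which yields the $L^4$-in-time bound on $\Vert \upeb \Vert_{H^1(\Mc_0)}$ immediately. You instead work with $p=2$, obtain a bound on $\Eb \sup \Vert \upeb \Vert_{H^1(\Mc_0)}^2$, and then bootstrap to $\int \Vert \upeb \Vert_{H^1}^4$ via the product $\bigl(\sup \Vert \upeb \Vert_{H^1}^2\bigr)\bigl(\int \Vert \upeb \Vert_{H^1}^2\bigr)$ together with the already-available bound $\int \Vert \Re \Vert^2 \leq K$ from $\tau_K^{w, \varepsilon}$, followed by a bare-hands Chebyshev argument. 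Both routes work; the paper's is more direct and incidentally delivers the dissipative quantity $\int \vert \grad \upeb \vert^2 \vert \laplace \upeb \vert^2$, which you do not obtain but also do not need for the stated proposition.

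Two smaller remarks. First, the term $\sqrt{\varepsilon}\lambda(\varepsilon)(\upeb \cdot \grad)\upeb$ (and likewise $(\vb^0 \cdot \grad)\upeb$) tested against $A_S \upeb$ actually \emph{vanishes} by the two-dimensional periodic divergence-free identity $\int_{\Mc_0} (u \cdot \grad) v \cdot \laplace v \, d\Mc_0 = 0$; the paper uses this cancellation explicitly, so your Ladyzhenskaya estimate for that term is unnecessary (though harmless, since your bound is integrable). Second, for the baroclinic--baroclinic interaction the paper does not go through the full anisotropic machinery you sketch but simply invokes the Cao--Titi bound $\vert ( P_{\Hbar}\Jc_1(\upe, \upe), \laplace \upeb ) \vert \leq C \vert \laplace \upeb \vert \, \vert \grad_3 (\upet)^3 \vert^{1/2} \Vert \Re \Vert^{1/2}$ from \cite[Section~3.3]{cao2007}, after which Young's inequality places $\vert \grad_3 (\upet)^3 \vert^2$ and $\Vert \Re \Vert^2$ into $\Phi$ exactly as you anticipate.
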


\begin{proof}
Applying the It\^{o} lemma \cite[Theorem A.1]{brzezniakslavik} to \eqref{eq:upeb} with the function $\vert A_S^{1/2} P_{\Hbar} \cdot \vert^4$ and using the cancellation property
\[
	\int_{\Mc_0} \left( u \cdot \grad \right) v \cdot \laplace v	\, d\Mc_0 = 0
\]
holding for $u \in H^2(\Mc_0)$ with $\div u = \div v = 0$ and periodic boundary conditions, we obtain
\begin{align*}
	d&\vert \grad \upeb \vert^4 + 4\mu \vert \grad \upeb \vert^2 \vert \laplace \upeb \vert^2 \, dt\\
	&\leq 4 \vert \grad \upeb\vert^2 \bigg[ \beta_T g \left| \left( \laplace \upeb, P_{\Hbar} \Ac_2 \int_{z}^0 \grad \Upe \right) \right| + \sqrt{\varepsilon} \lambda(\varepsilon) \left| \left( P_{\Hbar} \Jc_1(\upe, \upe), \laplace \upeb \right) \right|\\
	&\hphantom{\leq \vert \grad \upeb\vert^2 \bigg[ \ } + \left| \left( P_{\Hbar} \Jc_1(v^0, \upe), \laplace \upeb \right) \right| + \left| \left(P_{\Hbar} B_1(\upeb, v^0), \laplace \upeb \right) \right| + \left| \left( P_{\Hbar} \Jc_1(\upe, v^0), \laplace \upeb \right) \right| \bigg] \, dt\\
	&\hphantom{\leq \ } + 6 \lambda^{-2}(\varepsilon) \vert \grad \upeb\vert^2 \Vert \grad \Ac_2 \sigma_1\left( U^0 + \sqrt{\varepsilon} \lambda(\varepsilon) R^\varepsilon \right) \Vert_{L_2(\Uc, L^2(\Mc_0))}^2 \, dt\\
	&\hphantom{\leq \ } + 4 \lambda^{-1}(\varepsilon) \vert \grad \upeb\vert^2 \left( \grad \upeb, \grad \Ac_2 \sigma_1\left( U^0 + \sqrt{\varepsilon} \lambda(\varepsilon) R^\varepsilon \right) \, dW_1 \right)\\
	&= \sum_{k=1}^6 I_k \, dt + I_7 \, dW_1.
\end{align*}
The term $I_1$ can be estimated by the H\"{o}lder inequality. We get
\begin{equation}
	\label{eq:mdp.vb.i1}
	I_1 \leq \eta \vert \grad \upeb \vert^2 \vert \laplace \upeb \vert^2 + C_\eta \vert \grad \upeb \vert^2 \Vert \Re \Vert^2
\end{equation}
for some $\eta > 0$ fixed precisely determined later. Next we argue as in \cite[Section 3.3]{cao2007} and use the H\"{o}lder inequality to get
\begin{equation}
	\label{eq:mdp.vb.i2}
	\begin{split}
		I_2 &\leq C \sqrt{\varepsilon} \lambda(\varepsilon) \vert \grad \upeb \vert^2 \vert \laplace \upeb \vert \vert \grad_3 (\upet)^3 \vert^{1/2} \Vert \Re \Vert^{1/2}\\
		&\leq \eta \vert \grad \upeb \vert^2 \vert \laplace \upeb \vert^2 + C_\eta \varepsilon \lambda^2(\varepsilon) \left( \vert \grad \upeb \vert^4 \vert \grad_3 (\upet)^3 \vert^2 + \Vert \Re \Vert^2 \right).
	\end{split}
\end{equation}
We deal with $I_3$ using the H\"{o}lder inequality and Ladyzhenskaya inequality. We get
\begin{equation}
	\label{eq:mdp.vb.i3}
	\begin{split}
		I_4 &\leq C \vert \grad \upeb \vert^2 \vert \laplace \upeb \vert \vert (\upeb \cdot \grad) v^0 \vert \leq C \vert \grad \upeb \vert^2 \vert \laplace \upeb \vert \vert \upeb \vert_{L^4} \vert \grad \vb^0 \vert_{L^4}\\
		&\leq C \vert \grad \upeb \vert^2 \vert \laplace \upeb \vert \vert \upeb \vert^{1/2} \Vert \upeb \Vert^{1/2} \vert \grad \vb^0 \vert^{1/2} \Vert \grad \vb^0 \Vert^{1/2}\\
		&\leq \eta \vert \grad \upeb \vert^2 \vert \laplace \upeb \vert^2 + C_\eta \vert \grad \upeb \vert^2 \left( \vert \Re \vert^2 \Vert \Re \Vert^2 + \Vert U^0 \Vert^2 \Vert U^0 \Vert^2_{H^2} \right).
	\end{split}
\end{equation}
The term $I_4$ can be estimated in exactly the same wat as $I_3$, therefore
\begin{equation}
	\label{eq:mdp.vb.i4}
	I_4 \leq \eta \vert \grad \upeb \vert^2 \vert \laplace \upeb \vert^2 + C_\eta \vert \grad \upeb \vert^2 \left( \vert \Re \vert^2 \Vert \Re \Vert^2 + \Vert U^0 \Vert^2 \Vert U^0 \Vert^2_{H^2} \right)
\end{equation}
Regarding $I_5$ we split the term by the H\"{o}lder inequality as follows:
\begin{equation*}
	I_5 \leq C \vert \grad \upeb \vert^2 \vert \laplace \upeb \vert \left( \vert \Ac_2 \left[ (\upet \cdot \grad) \vt^0 \right] \vert + \vert \Ac_2 \left[ (\div \upet) \vt^0 \right] \vert \right) = I_5^1 + I_5^2.
\end{equation*}
By the H\"{o}lder inequality, the Young inequality and the anisotropic estimate \eqref{eq:anis.1} applied to $\grad \vt^0 \in H^1(\Mc; \Rb^2)$ we get
\begin{align}
	\nonumber
	I_5^1 &\leq C \vert \grad \upeb \vert^2 \vert \laplace \upeb \vert \left( \int_{\Mc_0} \vert \upet \vert_{L^2_z}^2 \vert \grad \vt^0 \vert_{L^2_z}^2 \, d\Mc_0 \right)^{1/2} \leq C \vert \grad \upeb \vert^2 \vert \laplace \upeb \vert \vert \upet \vert_{L^6_x L^2_z} \vert \grad \vt^0 \vert_{L^3_x L^2_z}\\
	\nonumber
	&\leq C \vert \grad \upeb \vert^2 \vert \laplace \upeb \vert \vert \upet \vert_{L^6} \vert \grad \vt^0 \vert^{2/3} \Vert \grad \vt^0 \Vert^{1/3}\\
	\label{eq:mdp.vb.i5.1}
	&\leq \eta \vert \grad \upeb \vert^2 \vert \laplace \upeb \vert^2 + C_\eta \vert \grad \upeb \vert^2 \left( \vert \upet \vert_{L^6}^4 + \Vert \vt^0 \Vert^{8/3} \Vert \vt^0 \Vert_{H^2}^{4/3} \right).
\end{align}
Regarding the term $I_5^2$ we use the boundedness of $\Rc$ in $L^\infty$ to deduce
\begin{equation}
	\label{eq:mdp.vb.i5.2}
	I_5^2 \leq C \vert \grad \upeb \vert^2 \vert \laplace \upeb \vert \vert \vt^0 \vert_{L^\infty} \vert \grad \upet \vert \leq \eta \vert \grad \upeb \vert^2 \vert \laplace \upeb \vert^2 + C_\eta \vert \grad \upeb \vert^2 \Vert \Re \Vert^2 \vert v^0 \vert_{L^\infty}^2.
\end{equation}
We estimate the It\^{o} correction term using the estimate \eqref{eq:sigma.vb} and the Young inequality by
\begin{equation}
	\label{eq:mdp.vb.i6}
	I_6 \leq C \lambda^{-2}(\varepsilon) \vert \grad \upeb \vert^2 \left(1 + \Vert U^0 \Vert^2 + \vert \laplace \vb^0 \vert^2 \right) + C \varepsilon \vert \grad \upeb \vert^2 \Vert \Re \Vert^2 + C \varepsilon \eta_2 \vert \grad \upeb \vert^2 \vert \laplace \upeb \vert^2.
\end{equation}
Let $K \geq 0$ and let $\tau_a$ and $\tau_b$ be stopping times such that $0 \leq \tau_a \leq \tau_b \leq t \wedge \tau^{6, \varepsilon}_K \wedge \tau^{w, \varepsilon}_K \wedge \tau^0_K$. Using the Burkholder-Davis-Gundy inequality and the bound \eqref{eq:sigma.vb} we may estimate the stochastic term similarly as in \eqref{eq:w.i1} and \eqref{eq:w.i2} by
\begin{equation}
	\label{eq:mdp.vb.i7}
	\begin{split}
		4 \lambda^{-1}(\varepsilon) &\Eb \sup_{s \in [\tau_a, \tau_b]} \left| \int_{\tau_a}^s \vert \grad \upeb \vert^2 \left( \grad \upeb, \grad \Ac_2 \sigma_1\left( U^0 + \sqrt{\varepsilon} \lambda(\varepsilon) R^\varepsilon \right) \, dW_1 \right) \right|\\
		&\leq \left(1 - \delta + \eta\right) \Eb \sup_{s \in [\tau_a, \tau_b]} + C_\eta \lambda^{-2}(\varepsilon) \Eb \int_{\tau_a}^{\tau_b} \vert \grad \upeb \vert^2 \left( 1 + \Vert U^0 \Vert^2 + \Vert U^0 \Vert_{H^2}^2 \right) \, ds\\
		&\hphantom{\leq \ } + C_\eta \varepsilon \Eb \int_{\tau_a}^{\tau_b} \vert \grad \upeb \vert^4 \, ds + \frac{C_\eta \eta_2 \varepsilon}{1 - \delta} \Eb \int_{\tau_a}^{\tau_b} \vert \grad \upeb \vert^2 \vert \laplace \upeb \vert^2 \, ds
	\end{split}
\end{equation}
for some $\delta \in [0, 1)$.

Collecting the estimates (\ref{eq:mdp.vb.i1}-\ref{eq:mdp.vb.i7}), choosing $\eta$ and $\delta$ appropriately small and assuming that $\varepsilon_0$ is sufficiently small we infer
\begin{equation*}
	\Eb \left[ \sup_{s \in [\tau_a, \tau_b]} \vert \grad \upeb \vert^4 + \int_{\tau_a}^{\tau_b} \vert \grad \upeb \vert^2 \vert \laplace \upeb \vert^2 \, ds \right] \leq C \Eb \int_{\tau_a}^{\tau_b} \left(1 + \vert \grad \upeb \vert^4 \right) \Phi(s) \, ds,
\end{equation*}
where
\begin{multline*}
	\Phi(s) = 1 + \Vert \Re \Vert^2 + \vert \grad_3 (\upet)^3 \vert^2 + \vert \Re \vert^2 \Vert \Re \Vert^2 + \Vert U^0 \Vert^4 \Vert U^0 \Vert^2_{H^2} + \vert \upet \vert_{L^6}^6\\
	+ \Vert \Re \Vert^2 \vert v^0 \vert_{L^\infty}^2 + \Vert U^0 \Vert^2 + \Vert U^0 \Vert_{H^2}^2.
\end{multline*}
The rest follow from the uniform stochastic Gronwall lemma, see Proposition \ref{prop:stoch.Gronwall}.
\end{proof}

\begin{proposition}
\label{prop:mdp.vz}
Let $K \geq 0$ and let $\tau_K^{z, \varepsilon}$ be the stopping time defined by
\begin{equation}
	\tau_K^{z, \varepsilon} = \inf \left\lbrace s \geq 0 \mid \int_0^s \vert \partial_z \Re \vert^2 \Vert \partial_z \Re \Vert^2 \, dr \geq K \right\rbrace.
\end{equation}
Then $\tau_K^{z, \varepsilon} \to \infty$ $\Pb$-almost surely as $K \to \infty$ for all $\varepsilon \in (0, \varepsilon_0]$. Moreover for all $\tilde{t} \geq 0$ one has
\[
	\lim_{K \to \infty} \Pb \left( \left\lbrace \tau_K^{z, \varepsilon} \leq \tilde{t} \right\rbrace \right) = 0 \ \text{uniformly w.r.t.\ } \varepsilon \in (0, \varepsilon_0].
\]
\end{proposition}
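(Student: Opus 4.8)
The plan is to follow the scheme already used for Propositions \ref{prop:mdp.weak}--\ref{prop:mdp.grad.vb}: derive an evolution equation for $\partial_z \Re$, run an It\^{o}/energy estimate for the functional $\vert \partial_z \Re \vert^4$, absorb or dominate every bad term, and close with the uniform stochastic Gronwall lemma of Proposition \ref{prop:stoch.Gronwall}. First I would differentiate the equation \eqref{eq:r.varep} in the vertical variable to obtain the equation satisfied by $\partial_z \Re = (\partial_z \upe, \partial_z \Upe)$. As in \cite{cao2007}, the boundary conditions $\partial_z v = 0$ on $\Gamma_i \cup \Gamma_b$ and $\partial_z T + \alpha T = 0$ on $\Gamma_i$, $\partial_z T = 0$ on $\Gamma_b$ become homogeneous Dirichlet conditions for $\partial_z \upe$ on $\Gamma_i \cup \Gamma_b$ and a favourable-sign Robin-type condition for $\partial_z \Upe$ on $\Gamma_i$. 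Applying the It\^{o} lemma \cite[Theorem A.1]{brzezniakslavik} to $\vert \partial_z \Re \vert^4$ and integrating the viscous part by parts then produces, thanks to these boundary conditions, a good term bounding $\int_0^s \vert \partial_z \Re \vert^2 \left( \mu \vert \grad \partial_z \Re \vert^2 + \nu \vert \partial_{zz} \Re \vert^2 \right) \, dr$ below by a multiple of $\int_0^s \vert \partial_z \Re \vert^2 \vert \grad_3 \partial_z \Re \vert^2 \, dr$; together with $\sup_{s} \vert \partial_z \Re \vert^4$ this accounts for the integrand $\vert \partial_z \Re \vert^2 \Vert \partial_z \Re \Vert^2$ in the definition of $\tau_K^{z, \varepsilon}$.

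The heart of the proof is the estimate of the drift coming from $\partial_z \left[ B(\Re, U^0 + \sqrt{\varepsilon}\lambda(\varepsilon)\Re) + B(U^0, \Re) \right]$ tested against $\vert \partial_z \Re \vert^2 \partial_z \Re$. Here one uses that differentiating the vertical-velocity contribution $w(v)\partial_z(\cdot) = -\big(\int_{-h}^{z} \div v\big)\partial_z(\cdot)$ in $z$ replaces $\int_{-h}^{z} \div v$ by $\div v$ and generates the extra term $-(\div v)\partial_z(\cdot)$; these are handled exactly as in \cite[Section 3]{cao2007} and \cite[Proposition 4.3]{debussche2012}, splitting the velocity into its barotropic and baroclinic parts and invoking the anisotropic estimates of Lemma \ref{lemma:anisotropic.estimates} (see also \cite{guillengonzalez2001}) together with the bounds from Propositions \ref{prop:mdp.weak}, \ref{prop:mdp.l6} and \ref{prop:mdp.grad.vb} (on $\int \vert \Re \vert^4 + \Vert \Re \Vert^2 + \vert \Re \vert^4 \Vert \Re \Vert^2$, on $\vert \upet \vert_{L^6}^6 + \int \vert \upet \vert^4 \vert \grad_3 \upet \vert^2$, and on $\Vert \upeb \Vert_{H^1(\Mc_0)}^4$) and the regularity of $U^0$ available on $[0, \tau_K^0]$. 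Every resulting contribution is bounded by $\eta \vert \partial_z \Re \vert^2 \vert \grad_3 \partial_z \Re \vert^2 + C_\eta \big(1 + \vert \partial_z \Re \vert^4\big)\Phi(s)$ for an $s$-integrable $\Phi$. The linear terms $\partial_z(\Apr \Re + E\Re)$ are controlled by the continuity of $\Apr$, $E$ and the Young inequality. The stochastic term is treated with the Burkholder--Davis--Gundy inequality \eqref{eq:bdg} and the structural assumption \eqref{eq:sigma.vz}: at the argument $U^0 + \sqrt{\varepsilon}\lambda(\varepsilon)\Re$ the right-hand side of \eqref{eq:sigma.vz} contributes $C\big(1 + \Vert U^0\Vert^2 + \varepsilon\lambda^2(\varepsilon)\Vert\Re\Vert^2\big)$, a term $\vert\grad_3\partial_z U^0\vert^2$ bounded on $[0,\tau_K^0]$, and a term $\varepsilon\eta_3\vert\grad_3\partial_z\Re\vert^2$ which, after BDG as in \eqref{eq:w.i1}--\eqref{eq:w.i2}, is absorbed into the good term provided $\varepsilon_0$ (or $\eta_3$) is small; the It\^{o} correction term is handled in the same way.

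Collecting everything on $[\tau_a, \tau_b]$ with $0 \leq \tau_a \leq \tau_b \leq t \wedge \tau_K^0 \wedge \tau_K^{w,\varepsilon} \wedge \tau_K^{6,\varepsilon} \wedge \tau_K^{\grad,\varepsilon}$ and choosing the free parameters $\eta$, $\delta$ small and $\varepsilon_0$ small gives
\[
	\Eb\left[ \sup_{s\in[\tau_a,\tau_b]} \vert \partial_z\Re\vert^4 + \int_{\tau_a}^{\tau_b} \vert\partial_z\Re\vert^2\vert\grad_3\partial_z\Re\vert^2 \, ds \right] \leq C\,\Eb\int_{\tau_a}^{\tau_b}\big(1 + \vert\partial_z\Re\vert^4\big)\Phi(s)\,ds
\]
with $C$ independent of $\varepsilon$. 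The uniform stochastic Gronwall lemma (Proposition \ref{prop:stoch.Gronwall}), combined with the convergence \eqref{eq:mpd.tau.0.convergence} and the uniform-in-$\varepsilon$ convergence of the stopping times from Propositions \ref{prop:mdp.weak}, \ref{prop:mdp.l6} and \ref{prop:mdp.grad.vb}, then yields the claim. The main obstacle will be the bookkeeping of the nonlinear $\partial_z$-terms --- in particular those produced by differentiating the vertical velocity, which, as in \cite{cao2007,debussche2012}, force one to juggle all the anisotropic estimates and the four previously established families of bounds so that each piece is either absorbed into $\vert\partial_z\Re\vert^2\vert\grad_3\partial_z\Re\vert^2$ or dominated by an integrable $\Phi$; a secondary technical point is the smallness requirement on $\eta_3$ (or $\varepsilon_0$) needed to absorb the gradient part of the noise bound \eqref{eq:sigma.vz}.
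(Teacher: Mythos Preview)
Your proposal is correct and follows the same overall scheme as the paper: It\^{o} on a power of $\vert\partial_z(\cdot)\vert$, absorb the nonlinear drift using Gagliardo--Nirenberg/anisotropic estimates together with the bounds from Propositions \ref{prop:mdp.weak}--\ref{prop:mdp.grad.vb} and the $U^0$-regularity encoded in $\tau_K^0$, control the stochastic part via BDG and \eqref{eq:sigma.vz}, and close with Proposition \ref{prop:stoch.Gronwall}.

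Two tactical differences are worth pointing out. First, the paper actually runs the estimate only on the \emph{velocity} component, i.e.\ on $\vert\partial_z\upe\vert^p$, and defers the temperature piece $\partial_z\Upe$ to the separate Proposition \ref{prop:mdp.T}; you propose to treat $\vert\partial_z\Re\vert^4$ in one go, which is perfectly fine (and arguably more consistent with the stopping time as stated) but means you must also incorporate the estimates that the paper pushes into Proposition \ref{prop:mdp.T}. Second, for the nonlinear drift the paper does \emph{not} split into barotropic/baroclinic modes here; instead it uses the integration-by-parts identity
\[
(\partial_z B_2(u,v),\partial_z v)=-\int_{\Mc}\partial_z\partial_j u_j\,v_k\,\partial_z v_k+\partial_z u_j\,v_k\,\partial_j\partial_z v_k-2u_j\,\partial_j\partial_z v_k\,\partial_z v_k\,d\Mc,
\]
together with H\"older and 3D Gagliardo--Nirenberg, so that the key coupling quantity that emerges is $\vert\upe\vert_{L^6}$ (controlled via $\vert\upet\vert_{L^6}$ and $\Vert\upeb\Vert_{H^1(\Mc_0)}$), plus the anisotropic estimate \eqref{eq:anis.1} to handle the $w(\upe)\partial_{zz}v^0$ term. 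Your barotropic/baroclinic route would also work, but the paper's identity is a bit more direct here.
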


\begin{proof}
The proof uses elements from the proof \cite[Proposition 5.2]{debussche2012}. Let $p \geq 2$. By the It\^{o} lemma \cite[Theorem A.1]{brzezniakslavik}, integration by parts and the cancellation property \eqref{eq:b.cancellation} we get
\begin{align*}
	d\vert \partial_z \upe \vert^p &+ p(\mu \wedge \nu) \vert \partial_z \upe \vert^{p-2} \vert \grad_3 \partial_z \upe \vert^2 \, dt\\
	&\leq p \vert \partial_z \upe \vert^{p-2} \bigg[ \left| \left( \grad \Upe, \partial_z \upe \right) \right| +  \sqrt{\varepsilon} \lambda(\varepsilon) \left| \left( \partial_z B_2(\upe, \upe), \partial_z \upe \right)\right| + \left| \left( \partial_z B_2(v^0, \upe), \partial_z \upe \right)\right|\\
	&\hphantom{p \vert \partial_z \upe \vert^{p-2} \bigg[ \ } + \left| \left( \partial_z B_2(\upe, v^0), \partial_z \upe \right)\right| + \frac{p-2}{2} \Vert \partial_z \sigma_1\left(U^0 + \sqrt{\varepsilon} \lambda(\varepsilon) \Re \right) \Vert_{L_2(\Uc, L^2)}^2 \bigg] \, dt\\
	&\hphantom{\leq \ } + p \vert \partial_z \upe \vert^{p-2} \left( \partial_z \sigma_1\left(U^0 + \sqrt{\varepsilon} \lambda(\varepsilon) \Re \right) \, dW_1, \partial_z \upe \right)\\
	&= \sum_{k=1}^5 I_k + I_6 \, dW_1.
\end{align*}

From the Cauchy-Schwartz inequality we immediately obtain
\begin{equation}
	\label{eq:eq:mdp.vz.i1}
	I_1 \leq C \vert \partial_z \upe \vert^{p-1} \Vert \Re \Vert.
\end{equation}
Before we proceed to $I_2$, let $u, v \in D(A)$. By the cancellation property \eqref{eq:b.cancellation} and integration by parts we have
\begin{align}
	\nonumber
	\left( \partial_z B_2(u, v), \partial_z v \right) &= \left( \left( \partial_z u \cdot \grad \right) v - \left( \div u \right) \partial_z v, \partial_z v \right) = \int_{\Mc} \partial_z u_j \partial_j v_k \partial_z v_k - \partial_j u_j \partial_z v_k \partial_z v_k \, d\Mc\\
	\label{eq:mdp.vz.split}
	&= - \int_{\Mc} \partial_z \partial_j u_j v_k \partial_z v_k + \partial_z u_j v_k \partial_j \partial_z v_k - 2 u_j \partial_j \partial_z v_k \partial_z v_k \, d\Mc.
\end{align}
Using the H\"{o}lder inequality, \eqref{eq:mdp.vz.split}, the Gagliardo-Nirenberg inequality and the Young inequality we deduce
\begin{equation}
	\label{eq:mdp.vz.i2}
	\begin{split}
		I_2 &\leq C \sqrt{\varepsilon} \lambda(\varepsilon) \vert \partial_z \upe \vert^{p-2} \vert \grad \partial_z \upe \vert \vert \upe \vert_{L^6} \vert \partial_z \upe \vert_{L^3}\\
		&\leq C \sqrt{\varepsilon} \lambda(\varepsilon) \left( \vert \partial_z \grad \upe \vert^{3/2} \vert \partial_z \upe \vert^{p-3/2} \vert \upe \vert_{L^6} + \vert \partial_z \grad \upe \vert^{1/2} \vert \partial_z \upe \vert^{p-1} \vert \upe \vert_{L^6} \right)\\
		&\leq \eta \vert \partial_z \upe \vert^{p-2} \vert \partial_z \grad \upe \vert^2 + C_\eta \vert \partial_z \upe \vert^p \left( \vert \upe \vert_{L^6}^4 + 1 \right)
	\end{split}
\end{equation}
for some $\eta > 0$ small precisely determined later. Using the H\"{o}lder inequality and \eqref{eq:mdp.vz.split} again we obtain
\begin{align*}
	I_3 &\leq C \vert \partial_z \upe \vert^{p-2} \left( \Vert v^0 \Vert_{H^2} \vert \upe \vert_{L^6} \vert \partial_z \upe \vert_{L^3} + \vert \partial_z v^0 \vert_{L^3} \vert \upe \vert_{L^6} \vert \partial_z \grad \upe \vert + \vert v^0 \vert_{L^\infty} \vert \partial_z \grad \upe \vert \vert \partial_z \upe \vert \right)\\
	&= I_3^1 + I_3^2 + I_3^3.
\end{align*}
By the Gagliardo-Nierenberg inequality and the Young inequality we get
\begin{equation}
	\label{eq:mdp.vz.i3.1}
	\begin{split}
		I_3^1 &\leq C \vert \partial_z \upe \vert^{p-2} \Vert v^0 \Vert_{H^2} \vert \upe \vert_{L^6} \left( \vert \partial_z \upe \vert^{1/2} \vert \grad \partial_z \upe \vert^{1/2} + \vert \partial_z \upe \vert \right)\\
		&\leq \eta \vert \partial_z \upe \vert^{p-2} \vert \partial_z \grad \upe \vert^2 + C_\eta \vert \partial_z \upe \vert^{p-4/3} \Vert v^0 \Vert_{H^2}^{4/3} \vert \upe \vert_{L^6}^{4/3} + C \vert \partial_z \upe \vert^{p-1} \Vert v^0 \Vert_{H^2} \vert \upe \vert_{L^6}.
	\end{split}
\end{equation}
Using the Young inequality and the Gagliardo-Nirenberg inequality for the term $I_3^2$ we obtain
\begin{equation}
	\label{eq:mdp.vz.i3.2and3}
	I_3^2 + I_3^3 \leq \eta \vert \partial_z \upe \vert^{p-2} \vert \partial_z \grad \upe \vert^2 + C_\eta \vert \partial_z \upe \vert^{p-2} \vert \upe \vert_{L^6}^2 \Vert v^0 \Vert \Vert v^0 \Vert_{H^2} + C_\eta \vert \partial_z \upe \vert^p \vert v^0 \vert_{L^\infty}^2.
\end{equation}
Proceeding to $I_4$ we use integration by parts to deduce
\begin{align*}
	I_4 &\leq C \vert \partial_z \upe \vert^{p-2} \left[ \left| \left( B_2(\upe, v^0), \partial_{zz} \upe \right) \right| + \left| \left( (\div \upe) \partial_z v^0, \partial_z \upe \right) \right| + \left| \left( w(\upe) \partial_{zz} v^0, \partial_z \upe \right) \right| \right]\\
	&= I_4^1 + I_4^2 + I_4^3.
\end{align*}
Similarly as above we use the H\"{o}lder inequality, the Gagliardo-Nirenberg inequality and the Young inequality to get
\begin{equation}
	\label{eq:mdp.vz.i4.1}
	I_4^1 \leq \eta \vert \partial_z \upe \vert^{p-2} \vert \partial_z \grad \upe \vert^2 + C_\eta \vert \partial_z \upe \vert^{p-2} \vert \upe \vert_{L^6} \Vert v^0 \Vert \Vert v^0 \Vert_{H^2}.
\end{equation}
The term $I_4^2$ can be estimated in exactly the same way as $I_3^1$, therefore we have
\begin{equation}
	\label{eq:mdp.vz.i4.2}
	I_4^2 \leq \eta \vert \partial_z \upe \vert^{p-2} \vert \partial_z \grad \upe \vert^2 + C_\eta \vert \partial_z \upe \vert^{p-4/3} \Vert v^0 \vert_{H^2}^{4/3} \vert \upe \vert_{L^6}^{4/3} + C \vert \partial_z \upe \vert^{p-1} \Vert v^0 \Vert_{H^2} \vert \upe \vert_{L^6}.
\end{equation}
Next we employ integration by parts to get
\begin{multline*}
	- \int_{\Mc} \left( \int_{-h}^0 \div \upe \, dz' \right) \partial_{zz} v^0 \partial_z \upe \, d\Mc_0\\
	= \int_{\Mc_0} \left( \int_{-h}^z \upe_j \, dz' \right) \partial_j \partial_{zz} v^0_k \partial_z \upe_k + \left( \int_{-h}^z \upe_j \, dz' \right) \partial_{zz} v^0_k \partial_j \partial_z \upe_k \, d\Mc.
\end{multline*}
Therefore we have
\begin{align*}
	I_4^3 &\leq C \vert \partial_z \upet \vert^{p-2} \left| \int_{\Mc_0} \left( \int_{-h}^z \upe_j \, dz' \right) \partial_j \partial_{zz} v^0_k \partial_z \upe_k \, d\Mc \right|\\
	&\hphantom{\leq \ } + C \vert \partial_z \upet \vert^{p-2} \left| \int_{\Mc_0} \left( \int_{-h}^z \upe_j \, dz' \right) \partial_{zz} v^0_k \partial_j \partial_z \upe_k \, d\Mc \right|\\
	&= I_4^{31} + I_4^{32}.
\end{align*}
using the H\"{o}lder inequality, the embedding $L^6 \hook L^6_x L^2_z$, the anisotropic estimate \eqref{eq:anis.1} and the Young inequality we get
\begin{equation}
	\label{eq:mdp.vz.i4.31}
	\begin{split}
		I_4^{31} &\leq C \vert \partial_z \upe \vert^{p-2} \int_{\Mc} \vert \upe \vert_{L^2_z} \vert \partial_{zz} v^0 \vert_{L^2_z} \vert \grad \partial_z \upe \vert_{L^2_z} \, d\Mc_0\\
		&\leq C \vert \partial_z \upe \vert^{p-2} \vert \upe \vert_{L^6_x L^2_z} \vert \partial_{zz} v^0 \vert_{L^3_x L^2_z} \vert \partial_z \grad \upe \vert\\
		&\leq C \vert \partial_z \upe \vert^{p-2} \vert \partial_z \grad \upe \vert \vert \upe \vert_{L^6} \Vert \partial_z v^0 \Vert^{2/3} \Vert \partial_z v^0 \Vert_{H^2}^{1/3}\\
		&\leq \eta \vert \partial_z \upe \vert^{p-2} \vert \partial_z \grad \upe \vert^2 + C_\eta \vert \partial_z \upe \vert^{p-2} \vert \upe \vert_{L^6}^2 \Vert \partial_z v^0 \Vert^{4/3} \Vert \partial_z v^0 \Vert_{H^2}^{2/3}.
	\end{split}
\end{equation}
Similarly we deduce
\begin{equation}
	\label{eq:mdp.vz.i4.32}
	\begin{split}
		I_4^{32} &\leq C \vert \partial_z \upe \vert^{p-2} \int_{\Mc} \vert \upe \vert_{L^2_z} \vert \grad \partial_{zz} v^0 \vert_{L^2_z} \vert \partial_z \upe \vert_{L^2} \, d\Mc\\
		&\leq C \vert \partial_z \upe \vert^{p-2} \vert \upe \vert_{L^6_x L^2_z} \Vert \partial_z v^0 \Vert_{H^2} \vert \partial_z \upe \vert_{L^3_x L^2_z}\\
		&\leq C \vert \partial_z \upe \vert^{p-2} \vert \upe \vert_{L^6} \Vert \partial_z v^0 \Vert_{H^2} \left( \vert \partial_z \upe \vert^{2/3} \vert \grad_3 \partial_z \upe \vert^{1/3} + \vert \partial_z \upe \vert \right)\\
		&\leq \eta \vert \partial_z \upe \vert^{p-2} \vert \partial_z \grad \upe \vert^2 + C_\eta \vert \partial_z \upe \vert^{p-6/5} \Vert \partial_z v^0 \Vert_{H^2}^{6/5} \vert \upe \vert_{L^6}^{6/5}.
	\end{split}
\end{equation}
The It\^{o} correction term is estimated using the bound \eqref{eq:sigma.vz} and the Young inequlity by
\begin{multline}
	\label{eq:mdp.vz.i5}
	I_5 \leq C \lambda^{-2}(\varepsilon) \vert \partial_z \upe \vert^{p-2} \left( 1 + \Vert U^0 \Vert^2 + \Vert U^0 \Vert_{H^2}^2 \right)\\
	+ C \varepsilon \vert \partial_z \upe \vert^p + p(p-1) \varepsilon \eta_0 \vert \partial_z \upe \vert^{p-2} \vert \grad_3 \partial_z \upe \vert^2.
\end{multline}
Let $K \geq 0$ and let $\tau_a$ and $\tau_b$ be stopping times such that
\[
	0 \leq \tau_a \leq \tau_b \leq t \wedge \tau^{\grad, \varepsilon}_K \wedge \tau^{6, \varepsilon}_K \wedge \tau^{w, \varepsilon}_K \wedge \tau^0_K.
\]
Proceeding similarly as in \eqref{eq:w.i1} and \eqref{eq:w.i2} we employ the Burkholder-Davis-Gundy inequality and the bound \eqref{eq:sigma.vz} on $\partial_z \sigma_1$ in $L_2(\Uc, L^2)$ and deduce
\begin{equation}
	\label{eq:mdp.vz.i6}
	\begin{split}
		p \lambda^{-1}(\varepsilon) &\Eb \sup_{s \in [\tau_a, \tau_b]} \left| \int_{\tau_a}^s \vert \partial_z \upe \vert^{p-2} \left( \partial_z \upe, \partial_z \sigma_1\left( U^0 + \sqrt{\varepsilon} \lambda(\varepsilon) R^\varepsilon \right) \, dW_1 \right) \right|\\
		&\leq \left(1 - \delta + \eta\right) \Eb \sup_{s \in [\tau_a, \tau_b]} \vert \partial_z \upe \vert^p + C_\eta \lambda^{-2}(\varepsilon) \Eb \int_{\tau_a}^{\tau_b} \vert \partial_z \upe \vert^{p-2} \left( 1 + \Vert U^0 \Vert_{H^2}^2 \right) \, ds\\
		&\hphantom{\leq \ } + C_\eta \varepsilon \Eb \int_{\tau_a}^{\tau_b} \vert \partial_z \upe \vert^{p} \, ds + \frac{C_\eta \eta_3 \varepsilon}{1 - \delta} \Eb \int_{\tau_a}^{\tau_b} \vert \partial_z \upe \vert^{p-2} \vert \grad_3 \partial_z \upe \vert^2 \, ds
	\end{split}
\end{equation}
for some $\delta \in (\eta, 1)$.

Collecting the above estimates, choosing $\delta$ and $\eta$ sufficiently small ans assuming that $\eta_3$ or $\varepsilon_0$ are sufficiently small we get
\begin{equation*}
	\Eb \left[ \sup_{s \in [\tau_a, \tau_b]} \vert \partial_z \upe \vert^p + \int_{\tau_a}^{\tau_b} \vert \partial_z \upe \vert^{p-2} \vert \grad_3 \partial_z \upe \vert^2 \, ds \right] \leq C \Eb \int_{\tau_a}^{\tau_b} \left(1 + \vert \partial_z \upe \vert^p \right) \Phi(s) \, ds,
\end{equation*}
where
\begin{equation*}
	\Phi(s) = 1 + \Vert \Re \Vert^2 + \vert \upe \vert_{L^6}^4 + \Vert U^0 \Vert_{H^2}^2 + \Vert U^0 \Vert^2 \Vert U^0 \Vert_{H^2}^2 + \Vert \partial_z U^0 \Vert^4 \Vert \partial_z U^0 \Vert_{H^2}^2 + \Vert \partial_z U^0 \Vert_{H^2}^2.
\end{equation*}
The claim follows from the uniform stochastic Gronwall lemma from Proposition \ref{prop:stoch.Gronwall}.
\end{proof}

The proof of the following proposition is similar to the proof of Proposition \ref{prop:mdp.vz} and \cite[Proposition 4.6]{brzezniakslavik} and is therefore omitted.

\begin{proposition}
\label{prop:mdp.T}
Let $K \geq 0$ and let $\tau_K^{T, \varepsilon}$ be the stopping time defined by
\begin{equation}
	\tau_K^{T, \varepsilon} = \inf \left\lbrace s \geq 0 \mid \sup_{r \in [0, s]} \vert \partial_z \Upe \vert^4 + \int_0^s \vert \partial_z \Upe \vert^2 \Vert \partial_z \Upe \Vert^2 \, dr \geq K \right\rbrace.
\end{equation}
Then $\tau_K^{T, \varepsilon} \to \infty$ $\Pb$-almost surely as $K \to \infty$ for all $\varepsilon \in (0, \varepsilon_0]$. Moreover for all $\tilde{t} \geq 0$ one has
\[
	\lim_{K \to \infty} \Pb \left( \left\lbrace \tau_K^{T, \varepsilon} \leq \tilde{t} \right\rbrace \right) = 0 \ \text{uniformly w.r.t.\ } \varepsilon \in (0, \varepsilon_0].
\]
\end{proposition}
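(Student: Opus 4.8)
The plan is to follow exactly the scheme used for Propositions \ref{prop:mdp.weak}, \ref{prop:mdp.l6}, \ref{prop:mdp.grad.vb} and, most closely, Proposition \ref{prop:mdp.vz}, together with the temperature estimate of \cite[Proposition 4.6]{brzezniakslavik}. First I would write down the equation satisfied by the vertical derivative $\partial_z \Upe$ of the temperature component of $\Re$. Differentiating the temperature part of \eqref{eq:r.varep} in $z$ and using $\partial_z w(\upe) = -\div \upe$ together with the homogeneous Neumann-type boundary conditions on $\Gamma_i \cup \Gamma_b$ (so that boundary terms vanish when integrating by parts), $\partial_z \Upe$ solves an equation driven by $\lambda^{-1}(\varepsilon) \partial_z \sigma_2(U^0 + \sqrt{\varepsilon}\lambda(\varepsilon)\Re) \, dW_2$ whose drift contains the linear dissipative term $-\mu_T \laplace \partial_z \Upe - \nu_T \partial_{zz} \partial_z \Upe$, the terms $\partial_z B_2(v^0, \Upe)$ and $\partial_z B_2(\upe, T^0)$, and the quadratic self-interaction $\sqrt{\varepsilon}\lambda(\varepsilon)\partial_z B_2(\upe, \Upe)$, where here $B_2(u, \phi) = (u\cdot\grad)\phi + w(u)\partial_z\phi$ acts on a scalar $\phi$.

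Second, I would apply the It\^{o} lemma \cite[Theorem A.1]{brzezniakslavik} to the functional $\vert \partial_z \Upe \vert^4$. Using the scalar analogue of the cancellation property \eqref{eq:b.cancellation}, namely $(B_2(u, \partial_z\phi), \partial_z\phi) = 0$, and integration by parts, one reduces $(\partial_z B_2(u, \phi), \partial_z\phi)$ to $((\partial_z u \cdot \grad)\phi, \partial_z\phi) - ((\div u)\partial_z\phi, \partial_z\phi)$ and then, exactly as in \eqref{eq:mdp.vz.split}, integrates by parts once more to move a derivative onto the coefficient. This yields a differential inequality of the form $d\vert \partial_z \Upe \vert^4 + 4(\mu_T \wedge \nu_T) \vert \partial_z \Upe \vert^2 \vert \grad_3 \partial_z \Upe \vert^2 \, dt \le (\text{drift terms}) \, dt + (\text{It\^{o} correction}) \, dt + (\text{stochastic differential})$.

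Third, each drift term would be estimated by the H\"{o}lder, Gagliardo--Nirenberg and Young inequalities together with the anisotropic estimates of Lemma \ref{lemma:anisotropic.estimates} (primarily \eqref{eq:anis.1} and the embedding $L^6 \hook L^6_x L^2_z$), absorbing the top-order contributions $\vert \partial_z \Upe \vert^2 \vert \grad_3 \partial_z \Upe \vert^2$ into the left-hand side with a small constant $\eta$. The factors involving $v^0$, $T^0$, $U^0$ and $\partial_z U^0$ are controlled on $[0, \tau_K^0]$ by the definition of $\tau_K^0$; the factors $\vert \upe \vert_{L^6}$ (split as $\vert \upeb \vert_{L^6} + \vert \upet \vert_{L^6}$, using that $\upeb$ is $z$-independent and $H^1(\Mc_0) \hook L^6(\Mc_0)$) are controlled on $[0, \tau_K^{6, \varepsilon} \wedge \tau_K^{\grad, \varepsilon}]$ via Propositions \ref{prop:mdp.l6} and \ref{prop:mdp.grad.vb}; the factors $\vert \partial_z \upe \vert$ and $\Vert \partial_z \upe \Vert$ on $[0, \tau_K^{z, \varepsilon}]$ via Proposition \ref{prop:mdp.vz}; and the remaining $\vert \Re \vert$, $\Vert \Re \Vert$ on $[0, \tau_K^{w, \varepsilon}]$ via Proposition \ref{prop:mdp.weak}. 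The quadratic self-interaction $\sqrt{\varepsilon}\lambda(\varepsilon)\partial_z B_2(\upe, \Upe)$ produces, after Young, a coefficient $C_{\eta, \varepsilon}$ with $C_{\eta, \varepsilon} \to 0$ as $\varepsilon \to 0+$, so it is absorbed for $\varepsilon_0$ small. The It\^{o} correction term, bounded by $C \vert \partial_z \Upe \vert^2 \Vert \partial_z \sigma_2(U^0 + \sqrt{\varepsilon}\lambda(\varepsilon)\Re) \Vert_{L_2(\Uc, L^2)}^2$, is handled via \eqref{eq:sigma.vz}, which generates a term $C \lambda^{-2}(\varepsilon) \vert \partial_z \Upe \vert^2 (1 + \Vert U^0 \Vert^2 + \Vert \partial_z U^0 \Vert_{H^2}^2)$ and a term $C \varepsilon \eta_3 \vert \partial_z \Upe \vert^2 \vert \grad_3 \partial_z \Upe \vert^2$, again absorbable once $\eta_3$ or $\varepsilon_0$ is small. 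The stochastic term is estimated by the Burkholder--Davis--Gundy inequality \eqref{eq:bdg} exactly as in \eqref{eq:w.i1}--\eqref{eq:w.i2}, splitting off a $(1 - \delta)\Eb \sup$ contribution that is moved to the left-hand side.

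Finally, collecting these bounds over stopping times $0 \le \tau_a \le \tau_b \le t \wedge \tau_K^{z, \varepsilon} \wedge \tau_K^{\grad, \varepsilon} \wedge \tau_K^{6, \varepsilon} \wedge \tau_K^{w, \varepsilon} \wedge \tau_K^0$ and choosing $\eta, \delta, \varepsilon_0$ sufficiently small, one arrives at
\[
	\Eb\left[ \sup_{s \in [\tau_a, \tau_b]} \vert \partial_z \Upe \vert^4 + \int_{\tau_a}^{\tau_b} \vert \partial_z \Upe \vert^2 \vert \grad_3 \partial_z \Upe \vert^2 \, ds \right] \le C \, \Eb \int_{\tau_a}^{\tau_b} \left( 1 + \vert \partial_z \Upe \vert^4 \right) \Phi(s) \, ds,
\]
with $\Phi$ an explicit process built from $\Vert \Re \Vert^2$, $\vert \upe \vert_{L^6}^4$ and $\Vert U^0 \Vert$-, $\Vert \partial_z U^0 \Vert_{H^2}$-type quantities which is integrable on $[0, \tilde{t}]$ uniformly in $\varepsilon$ by the preceding propositions and \eqref{eq:mpd.tau.0.convergence}. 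The claim then follows from the uniform stochastic Gronwall lemma, Proposition \ref{prop:stoch.Gronwall}, exactly as in the proofs of Propositions \ref{prop:mdp.vz} and \ref{prop:mdp.grad.vb}. I expect the main obstacle to be the treatment of the nonlinear terms carrying the vertical velocities $w(\upe)$ and $w(v^0)$: as in \cite[Proposition 5.2]{debussche2012} and \eqref{eq:mdp.vz.i4.31}--\eqref{eq:mdp.vz.i4.32}, one must integrate by parts in $z$ to remove a derivative from the factor containing $\partial_z \Upe$ and then close with the anisotropic Sobolev estimates so that only the absorbable quantity $\vert \partial_z \Upe \vert^2 \vert \grad_3 \partial_z \Upe \vert^2$ and $\vert \partial_z \Upe \vert^4$ times an integrable weight remain; bookkeeping the $\sqrt{\varepsilon}\lambda(\varepsilon)$ prefactors so that they yield vanishing constants $C_{\eta, \varepsilon}$ is the other delicate point.
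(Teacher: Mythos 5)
Your proposal coincides with the paper's (omitted) argument: the paper only remarks that the proof is analogous to Proposition \ref{prop:mdp.vz} and \cite[Proposition 4.6]{brzezniakslavik}, i.e.\ It\^{o} for $\vert \partial_z \Upe \vert^4$, the splitting as in \eqref{eq:mdp.vz.split}, the anisotropic and Burkholder--Davis--Gundy estimates over stopping times bounded by the previously constructed ones, and the uniform stochastic Gronwall lemma, exactly as you outline. The one detail to adjust is that $\Upe$ satisfies the Robin condition $\partial_z \Upe + \alpha \Upe = 0$ on $\Gamma_i$ (not a homogeneous Neumann condition there), so the boundary terms arising from integration by parts do not vanish identically but give lower-order trace contributions that are controlled as in \cite[Proposition 4.6]{brzezniakslavik}.
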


\begin{proposition}
\label{prop:mdp.R}
Let $p \geq 2$, $K \geq 0$ and let $\tau_K^{U, \varepsilon, p}$ be the stopping time defined by
\begin{equation}
	\tau_K^{R, \varepsilon, p} = \inf \left\lbrace s \geq 0 \mid \sup_{r \in [0, s]} \Vert \Re \Vert^p + \int_0^s \Vert \Re \Vert^{p-2} \Vert \Re \Vert^2_{H^2} \, dr \geq K \right\rbrace.
\end{equation}
Then $\tau_K^{R, \varepsilon, p} \to \infty$ $\Pb$-almost surely as $K \to \infty$ for all $\varepsilon \in (0, \varepsilon_0]$. Moreover for all $\tilde{t} \geq 0$ one has
\[
	\lim_{K \to \infty} \Pb \left( \left\lbrace \tau_K^{R, \varepsilon, p} \leq \tilde{t} \right\rbrace \right) = 0 \ \text{uniformly w.r.t.\ } \varepsilon \in (0, \varepsilon_0].
\]
\end{proposition}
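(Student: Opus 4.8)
The plan is to apply the Itô lemma \cite[Theorem A.1]{brzezniakslavik} to $\Vert \Re \Vert^p$ and derive, on a suitable stochastic interval, a differential inequality of the form
\[
	d\Vert\Re\Vert^p + c\,p\,\Vert\Re\Vert^{p-2}\vert A\Re\vert^2\,dt \leq (\text{drift})\,dt + (\text{martingale})\,dW,
\]
(recall that $\Vert\Re\Vert_{H^2}^2 \simeq \vert A\Re\vert^2$ up to lower-order terms which are absorbed), and then to close the estimate with the uniform stochastic Gronwall lemma from Proposition~\ref{prop:stoch.Gronwall}, exactly as in the proofs of Propositions~\ref{prop:mdp.weak}--\ref{prop:mdp.vz}. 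Testing the equation \eqref{eq:r.varep} against $A\Re$ produces, besides the dissipation $-p\Vert\Re\Vert^{p-2}\vert A\Re\vert^2$, the linear terms $\Apr\Re + E\Re$, which are handled by their boundedness as $\eta\,p\,\Vert\Re\Vert^{p-2}\vert A\Re\vert^2 + C_\eta\Vert\Re\Vert^p$; the cross terms $B(\Re,U^0)$ and $B(U^0,\Re)$; the quadratic term $\sqrt\varepsilon\lambda(\varepsilon)B(\Re,\Re)$; and the Itô correction $\tfrac{p(p-1)}{2}\lambda^{-2}(\varepsilon)\Vert\Re\Vert^{p-2}\Vert\sigma(U^0+\sqrt\varepsilon\lambda(\varepsilon)\Re)\Vert_{L_2(\Uc,V)}^2$, which by \eqref{eq:sigma.bnd.V} is dominated by $C\lambda^{-2}(\varepsilon)\Vert\Re\Vert^{p-2}(1+\Vert U^0\Vert^2) + C\varepsilon\Vert\Re\Vert^p + p(p-1)\varepsilon\eta_1\Vert\Re\Vert^{p-2}\vert A\Re\vert^2$, the last term being absorbed into the dissipation once $\eta_1$ (or $\varepsilon_0$) is small.

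For the two cross terms I would use \eqref{eq:b.estimate2} with $U^\flat = A\Re$: since $U^0$ is controlled in $H^2$ up to $\tau_K^0$, the Young inequality gives $p\Vert\Re\Vert^{p-2}\vert(B(\Re,U^0),A\Re)\vert \leq \eta\,p\,\Vert\Re\Vert^{p-2}\vert A\Re\vert^2 + C_\eta\Vert\Re\Vert^p\Vert U^0\Vert^2\Vert U^0\Vert_{H^2}^2$, and similarly for $B(U^0,\Re)$. The quadratic term $\sqrt\varepsilon\lambda(\varepsilon)(B(\Re,\Re),A\Re)$ is the delicate one, since a crude bound via \eqref{eq:b.estimate2} produces $\sqrt\varepsilon\lambda(\varepsilon)\Vert\Re\Vert^{p-1}\vert A\Re\vert^2$, which cannot be absorbed. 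Instead I would follow Cao and Titi \cite{cao2007} (as in \cite{debussche2012,brzezniakslavik}), split $\Re = (\upeb+\upet,\Upe)$ into barotropic and baroclinic modes, and estimate $(B(\Re,\Re),A\Re)$ in terms of $\Vert\upeb\Vert_{H^1(\Mc_0)}$, $\vert\upet\vert_{L^6}$, $\int_\Mc\vert\upet\vert^4\vert\grad_3\upet\vert^2\,d\Mc$, $\vert\partial_z\upe\vert$, $\Vert\partial_z\upe\Vert$, $\vert\partial_z\Upe\vert$ and $\Vert\partial_z\Upe\Vert$; each of these is controlled, in exactly the integrated form needed, by the stopping times of Propositions~\ref{prop:mdp.weak}, \ref{prop:mdp.l6}, \ref{prop:mdp.grad.vb}, \ref{prop:mdp.vz} and \ref{prop:mdp.T}, while the vanishing prefactor $\varepsilon\lambda^2(\varepsilon) = (\sqrt\varepsilon\lambda(\varepsilon))^2 \to 0$ provides additional room. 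The outcome is a bound of the type $\eta\,p\,\Vert\Re\Vert^{p-2}\vert A\Re\vert^2 + C_\eta(1+\Vert\Re\Vert^p)\Psi(s)$ with $\Psi$ a sum of the integrable quantities just listed.

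The martingale part is treated exactly as in \eqref{eq:w.i1}--\eqref{eq:w.i2}: the Burkholder--Davis--Gundy inequality \eqref{eq:bdg}, the bound \eqref{eq:sigma.bnd.V} and the Young inequality produce, for stopping times $0 \leq \tau_a \leq \tau_b \leq t \wedge \tau_K^0 \wedge \tau_K^{w,\varepsilon} \wedge \tau_K^{6,\varepsilon} \wedge \tau_K^{\grad,\varepsilon} \wedge \tau_K^{z,\varepsilon} \wedge \tau_K^{T,\varepsilon}$, a term $(1-\delta+\eta)\Eb\sup_{[\tau_a,\tau_b]}\Vert\Re\Vert^p$ to be absorbed on the left, plus lower-order terms carrying $\lambda^{-2}(\varepsilon)$ and $\varepsilon$. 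Collecting everything and choosing $\eta,\delta$ (and $\eta_1$ or $\varepsilon_0$) small gives
\[
	\Eb\Big[\sup_{[\tau_a,\tau_b]}\Vert\Re\Vert^p + \int_{\tau_a}^{\tau_b}\Vert\Re\Vert^{p-2}\vert A\Re\vert^2\,ds\Big] \leq C\,\Eb\int_{\tau_a}^{\tau_b}(1+\Vert\Re\Vert^p)\Phi(s)\,ds
\]
with $C$ independent of $\varepsilon$ and with $\Phi$ a process whose integral over $[0, t\wedge\tau_K^0\wedge\tau_K^{w,\varepsilon}\wedge\tau_K^{6,\varepsilon}\wedge\tau_K^{\grad,\varepsilon}\wedge\tau_K^{z,\varepsilon}\wedge\tau_K^{T,\varepsilon}]$ is bounded by a deterministic constant; together with the "uniform" convergences from Propositions~\ref{prop:mdp.weak}--\ref{prop:mdp.T} and \eqref{eq:mpd.tau.0.convergence}, the uniform stochastic Gronwall lemma of Proposition~\ref{prop:stoch.Gronwall} then yields the claim. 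I expect the main obstacle to be precisely the quadratic nonlinear term: organizing the Cao--Titi mode decomposition of $(B(\Re,\Re),A\Re)$ so that the resulting $\Phi$ is exactly the combination of quantities rendered integrable by the preceding propositions, while keeping track of the powers of $\lambda(\varepsilon)$ and of the small parameters used to absorb the critical contributions.
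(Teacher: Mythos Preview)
Your overall strategy is correct and matches the paper: apply the It\^o lemma to $\vert A^{1/2}\Re\vert^p$, estimate the drift terms, treat the stochastic integral via BDG exactly as in \eqref{eq:w.i1}--\eqref{eq:w.i2}, and close with Proposition~\ref{prop:stoch.Gronwall} on the interval up to $t\wedge\tau_K^{T,\varepsilon}\wedge\tau_K^{z,\varepsilon}\wedge\tau_K^{\grad,\varepsilon}\wedge\tau_K^{6,\varepsilon}\wedge\tau_K^{w,\varepsilon}\wedge\tau_K^0$. The cross terms $B(\Re,U^0)$, $B(U^0,\Re)$, the linear part $\Apr\Re+E\Re$, the It\^o correction and the martingale are handled precisely as you describe.

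The only point where you diverge from the paper is the quadratic term $\sqrt{\varepsilon}\lambda(\varepsilon)(B(\Re,\Re),A\Re)$. You propose to redo the full Cao--Titi barotropic/baroclinic decomposition of this inner product. The paper instead invokes the ready-made anisotropic estimate~\eqref{eq:b.estimatel6},
\[
	\left|b(U,U^\sharp,U^\flat)\right|\leq C\left(\vert v\vert_{L^6}\Vert U^\sharp\Vert^{1/2}\Vert U^\sharp\Vert_{H^2}^{1/2}+\Vert v\Vert^{1/2}\Vert v\Vert_{H^2}^{1/2}\vert\partial_z U^\sharp\vert^{1/2}\Vert\partial_z U^\sharp\Vert^{1/2}\right)\vert U^\flat\vert,
\]
applied with $U=U^\sharp=\Re$, $U^\flat=A\Re$, followed by Young's inequality, to obtain directly
\[
	I_1\leq\eta\Vert\Re\Vert^{p-2}\Vert\Re\Vert_{H^2}^2+C_\eta\,\varepsilon\lambda^2(\varepsilon)\Vert\Re\Vert^p\left(\vert\upe\vert_{L^6}^4+\vert\partial_z\Re\vert^2\Vert\partial_z\Re\Vert^2\right).
\]
The resulting $\Phi$ is simply $1+\vert\upe\vert_{L^6}^4+\vert\partial_z\Re\vert^2\Vert\partial_z\Re\Vert^2+\Vert U^0\Vert^2\Vert U^0\Vert_{H^2}^2+\Vert U^0\Vert_{H^2}^2$, whose time integral is controlled by $\tau_K^{6,\varepsilon}$, $\tau_K^{\grad,\varepsilon}$, $\tau_K^{z,\varepsilon}$, $\tau_K^{T,\varepsilon}$ and $\tau_K^0$ (note $\vert\upe\vert_{L^6}\leq\vert\upet\vert_{L^6}+C\Vert\upeb\Vert_{H^1(\Mc_0)}$). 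Your mode-decomposition route would also work and produces equivalent control, but it is considerably longer; the estimate~\eqref{eq:b.estimatel6} already encodes the anisotropic structure you need, so there is no reason to unpack it again here.
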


\begin{proof}
By the It\^{o} lemma \cite[Theorem A.1]{brzezniakslavik} and integrating by parts we have
\begin{align*}
	d&\vert A^{1/2} \Re \vert^p + p\left( \mu \wedge \nu \right) \vert A^{1/2} \Re \vert^{p-2} \vert A \Re \vert^2 \, dt\\
	&\leq p \vert A^{1/2} \Re \vert^{p-2} \bigg[ \sqrt{\varepsilon} \lambda(\varepsilon) \left| \left( B(\Re, \Re), A\Re \right) \right| + \left| \left( B(U^0, \Re), A\Re \right) \right| + \left| \left( B(\Re, U^0), A\Re \right) \right|\\
	&\hphantom{\leq 2 \bigg[ \ } + \left| \left( F_d(\Re), A\Re \right)  \right|  +  \tfrac{p-1}2 \lambda^{-2}(\varepsilon) \Vert \sigma\left( U^0 + \sqrt{\varepsilon} \lambda(\varepsilon) \Re \right) \Vert_{L_2(\Uc, V)}^2 \bigg] \, dt\\
		&\hphantom{\leq \ } + p \lambda^{-1}(\varepsilon) \vert A^{1/2} \Re \vert^{p-2} \left( A^{1/2} \sigma\left( U^0 + \sqrt{\varepsilon} \lambda(\varepsilon) \Re \right) \, dW, A^{1/2} \Re \right)\\
		&= \sum_{k=1}^5 I_k \, dt + I_6 \, dW.
\end{align*}
By the estimate \eqref{eq:b.estimatel6} and the Young inequality we have
\begin{equation}
	\label{eq:mdp.R.i1}
	I_1 \leq \eta \Vert \Re \Vert^{p-2} \Vert \Re \Vert_{H^2}^2 + C_\eta \varepsilon \lambda^{2}(\varepsilon) \Vert \Re \Vert^p \left( \vert \upe \vert_{L^6}^4 + \vert \partial_z \Re \vert^2 \Vert \partial_z \Re \Vert^2 \right),
\end{equation}
where $\eta > 0$ will be determined later. From the estimate \eqref{eq:b.estimate2} and the Young inequality we get
\begin{equation}
	\label{eq:mdp.R.i234}
	I_2 + I_3 + I_4 \leq \eta \Vert \Re \Vert^{p-2} \Vert \Re \Vert_{H^2}^2 + C_\eta \Vert \Re \Vert^p \left( 1 + \Vert U^0 \Vert^2 \Vert U^0 \Vert_{H^2}^2 \right).
\end{equation}
For the It\^{o} correction term we have
\begin{equation}
	\label{eq:mdp.R:i5}
	I_5 \leq C \lambda^{-2}(\varepsilon) \Vert \Re \Vert^{p-2} \left(1 + \Vert U^0 \Vert_{H^2}^2 \right) + C \varepsilon \Vert \Re \Vert^p + C \varepsilon \eta_1 \Vert \Re \Vert^{p-2} \Vert \Re \Vert_{H^2}^2.
\end{equation}
Similarly as in the above proofs let $\tau_a$ and $\tau_b$ be stopping times such that
\[
	0 \leq \tau_a \leq \tau_b \leq t \wedge \tau^{T, \varepsilon}_K \wedge \tau^{z, \varepsilon}_K \wedge \tau^{\grad, \varepsilon}_K \wedge \tau^{6, \varepsilon}_K \wedge \tau^{w, \varepsilon}_K \wedge \tau^0_K.
\]
Using the Burkholder-Davis-Gundy inequality \eqref{eq:bdg} and similar estimates as in \eqref{eq:w.i1} and \eqref{eq:w.i2} we deduce
\begin{equation}
	\label{eq:mdp.R.i6}
	\begin{split}
	p \lambda^{-1}(\varepsilon) &\Eb \sup_{s \in [\tau_a, \tau_b]} \left| \int_{\tau_a}^s I_6 \, dW \right|\\
		&\leq \left(1 - \delta + \eta\right) \Eb \sup_{s \in [\tau_a, \tau_b]} \Vert \Re \Vert^p + C_\eta \lambda^{-2}(\varepsilon) \Eb \int_{\tau_a}^{\tau_b} \Vert \Re \Vert^{p-2} \left( 1 + \Vert U^0 \Vert_{H^2}^2 \right) \, ds\\
		&\hphantom{\leq \ } + C_\eta \varepsilon \Eb \int_{\tau_a}^{\tau_b} \Vert \Re \Vert^{p} \, ds + \frac{C_\eta \eta_3 \varepsilon}{1 - \delta} \Eb \int_{\tau_a}^{\tau_b} \Vert \Re \Vert^{p-2} \Vert \Re \Vert_{H^2}^2 \, ds.
	\end{split}
\end{equation}
Collecting the above estimates, choosing $\eta, \delta > 0$ and $\varepsilon_0$ sufficiently small we obtain
\[
	\Eb \left[ \sup_{s \in [\tau_a, \tau_b]} \Vert \Re \Vert^p + \int_{\tau_a}^{\tau_b} \Vert \Re \Vert^{p-2} \Vert \Re \Vert^2_{H_2} \, ds \right] \leq C_p \Eb \int_{\tau_a}^{\tau_b}\left( 1 + \Vert \Re \Vert^p \right) \left( 1 + \Phi(s) \right) \, ds,
\]
where the constant $C_p$ depends on $p$ but is independent of $\tau_a$ and $\tau_b$ and
\[
	\Phi(s) = 1 +  \vert \upe \vert_{L^6}^4 + \vert \partial_z \Re \vert^2 \Vert \partial_z \Re \Vert^2 + \Vert U^0 \Vert^2 \Vert U^0 \Vert_{H^2}^2 + \Vert U^0 \Vert_{H^2}^2.
\]

The proof is closed using the Gronwall lemma from Proposition \ref{prop:stoch.Gronwall} similarly as in the proofs above.
\end{proof}

\subsection{Proof of Theorem \ref{thm:mdp}}
\label{sect:mdp.proof}

The proof of Theorem \ref{thm:mdp} follows the argument of the proof of Theorem \ref{thm:ldp} in Section \ref{sect:ldp.proof} using the results of Sections \ref{sect:mdp.skeleton} and \ref{sect:mdp.preliminary} and is therefore omitted. The respective good rate function $I: C([0, t], V) \cap L^2(0, t; D(A)) \to \Rb$ is given by
\begin{equation}
	\label{eq:mdp.rate}
	I(U) = \inf \left\lbrace \frac12 \int_0^t \vert h \vert_{\Uc}^2 \, ds \mid h \in L^2(0, t; \Uc) \ \text{s.t.} \ U = \Gc^0_R\left(\int_0^\cdot h \, ds\right) \right\rbrace
\end{equation}
where $\Gc^0_R$ has been defined in Proposition \ref{prop:mdp.skeleton}. \qed

\section{Proof of Theorem \ref{thm:clt}}
\label{sect:clt}

Most of the estimates in this section are a straightforward adaptation of the estimates from Section \ref{sect:mdp.preliminary}. Thus we only go through the main steps of the proof which closely follows the one from \cite{zhang2019}.

Let $U^\varepsilon$ and $U^0$ be the solutions of \eqref{eq:ldp.main} and \eqref{eq:mdp.u.zero}, respectively, and recall that for $K \geq 0$ the stopping time
\[
	\tau_K^0 = \inf \left\lbrace s \geq 0 \mid \int_0^s \Vert U^0 \Vert_{H^2}^2 + \Vert U^0 \Vert^2 \Vert U^0 \Vert^2_{H^2} \, dr \right\rbrace
\]
satisfies $\tau_K^0 \to \infty$ as $K \to \infty$. Let $\Re = (U^\varepsilon - U^0)/\sqrt{\varepsilon}$. Clearly, $\Re$ satisfies the equation
\begin{equation}
	\label{eq:r.varep.clt}
	d\Re + \left[ A\Re + B\left( \Re, U^0 + \sqrt{\varepsilon} \Re \right) + B\left(U^0, \Re \right) + \Apr \Re + E \Re \right] \, dt = \sigma\left( \Ue \right) \, dW,
\end{equation}
with $\Re(0) = 0$, which is essentially \eqref{eq:r.varep} with $\lambda(\varepsilon) \equiv 1$. Let $\tau_K^{R, \varepsilon}$ be the stopping time defined for $K \geq 0$ by
\[
	\tau_K^{R, \varepsilon} = \inf \left\lbrace s \geq 0 \mid \sup_{r \in [0, s]} \Vert \Re \Vert^2 + \int_0^s \Vert \Re \Vert_{H^2}^2 + \Vert \Re \Vert^2 \Vert \Re \Vert_{H^2}^2 \, dr \right\rbrace.
\]
By the result of Section \ref{sect:mdp.preliminary} we have $\tau_K^{R, \varepsilon} \to \infty$ $\Pb$-a.s.\ as $K \to \infty$. Let $\Uh$ be the solution of the equation
\begin{equation}
	\label{eq:clt.uh}
	d\Uh + \left[ A\Uh + B(U^0, \Uh ) + B( \Uh, U^0 ) + \Apr \Uh  + E\Uh \right] \ ds = \sigma( U^0 ) \, dW, \quad \Uh(0) = 0.
\end{equation}
Let $\Ye = \Re - \Uh$. Using bilinearity of $B$ we observe that $\Ye$ satisfies the equation
\begin{multline}
	\label{eq:clt.ye}
	d\Ye + \left[ A\Ye + \sqrt{\varepsilon} B(\Re, \Re) + B(U^0, \Ye) + B(\Ye, U^0) + F_d(\Ye) \right] \, dt\\
	= \left[ \sigma(\Ue) - \sigma(U^0) \right] \, dW, \quad \Ye(0) = 0.
\end{multline}
Let $\tau_a$ and $\tau_b$ be stopping times such that $0 \leq \tau_a \leq \tau_b \leq t \wedge \tau_K^{R, \varepsilon} \wedge \tau_K^0$. By the It\^{o} lemma  and similar estimates as in Proposition \ref{prop:mdp.R} with the estimate \eqref{eq:b.estimate2} on $B$, the Lipschitz continuity \eqref{eq:sigma.lip.V} of $\sigma$ in $L_2(\Uc, V)$ and the boundedness of the operators $\Apr$ and $E$ we obtain
\begin{multline*}
	\Eb \left[ \sup_{s \in [\tau_a, \tau_b]} \Vert \Ye \Vert^2 + \int_{\tau_a}^{\tau_b} \Vert \Ye \Vert_{H^2}^2 \, ds \right] \leq C \Eb \int_{\tau_a}^{\tau_a} \Vert \Ye \Vert^2 \left( 1 + \Vert U^0 \Vert^2 \Vert U^0 \Vert_{H^2}^2 \right) \, ds\\
	+ C \Eb \Vert \Ye(\tau_a) \Vert^2 + C \varepsilon \Eb \int_{\tau_a}^{\tau_b} \Vert \Re \Vert^2 + \Vert \Re \Vert_{H^2}^2 + \Vert \Re \Vert^2 \Vert \Re \Vert_{H^2}^2 \, ds.
\end{multline*}
By the stochastic Gronwall lemma from \cite[Lemma 5.3]{glatt-holtz2009} we have
\begin{multline}
	\label{eq:clt.convergence}
	\Eb \left[ \sup_{s \in [0, t \wedge \tau_K^{R, \varepsilon}]} \Vert \Ye \Vert^2 + \int_0^{t \wedge \tau_K^{R, \varepsilon}} \Vert \Ye \Vert_{H^2}^2 \, ds \right]\\
	\leq C_{K} \varepsilon \Eb \int_{0}^{t \wedge \tau_K^{R, \varepsilon}} \Vert \Re \Vert^2 + \Vert \Re \Vert_{H^2}^2 + \Vert \Re \Vert^2 \Vert \Re \Vert_{H^2}^2 \, ds.
\end{multline}
If we define
\[
	\Omega_K = \left\lbrace \omega \in \Omega \mid \tau_K^0 \wedge \tau_K^{R, \varepsilon} \geq t \right\rbrace
\]
and consider the process $\mathds{1}_{\Omega_K} \Ye$ instead of $\Ye$, keeping $K$ fixed in \eqref{eq:clt.convergence} and taking the limit $\varepsilon \to 0+$, we get $\Ye \to 0$ in $C([0, t], V ) \cap L^2(0, t; H^2)$ $\Pb$-a.s.\ on $\Omega_K$. Since $\tau_K^0 \wedge \tau_K^{R, \varepsilon} \to \infty$ as $K \to \infty$ $\Pb$-almost surely, we have $\Pb(\Omega \setminus \cup_{K \in \Nb} \Omega_K) = 0$, which gives the convergence $\Pb$-a.s.\ on $\Omega$. \qed

\section*{Acknowledgement}

The author wishes to thank prof.\ Z.\ Brze\'{z}niak and P.\ Razafimandimby for fruitful discussions and the University of York for their kind hospitality.

\appendix

\section{Uniform version of stochastic Gronwall lemma}

The following result is not new, in fact it is a combination of the stochastic Gronwall lemma from \cite[Lemma 5.3]{glatt-holtz2009} and a part of the proof of global existence of the strong solutions of 2D stochastic Navier-Stokes equation from \cite[Theorem 4.2]{glatt-holtz2009}. We include the proof for the sake of completeness.

\begin{proposition}
\label{prop:stoch.Gronwall}
Let $\varepsilon_0 > 0$. Let $X^\varepsilon, Y^\varepsilon, Z^\varepsilon, R^\varepsilon: [0, \infty) \times \Omega \to [0, \infty)$ be stochastic processes on a probability space $(\Omega, \Fc, \Pb)$. Let $\tau_K^{R, \varepsilon}$ be the stopping time defined for $K > 0$ and $\varepsilon \in (0, \varepsilon_0]$ by
\[
	\tau_K^{R, \varepsilon} = \inf \left\lbrace t \geq 0 \mid \int_0^t R^\varepsilon \, ds \geq K \right\rbrace
\]
and let for all $t > 0$
\begin{equation}
	\label{eq:gronwall.R.convergence}
	\lim_{K \to \infty} \Pb \left( \left\lbrace \tau_K^{R, \varepsilon} \leq t \right\rbrace \right) = 0 \ \text{\emph{uniformly} w.r.t.\ } \varepsilon \in (0, \varepsilon_0].
\end{equation}
Let $T > 0$ and let for all $K > 0$ and $\varepsilon \in (0, \varepsilon_0]$
\[
	\Eb \int_0^{T \wedge \tau_K^{R, \varepsilon}} R^\varepsilon X^\varepsilon + Z^\varepsilon \leq C_{T, K} < \infty.
\]
Let there exist a constant $C_0 = C_0(T)$ such that for all $\varepsilon \in (0, \varepsilon_0]$ and all stopping times $\tau_a$ and $\tau_b$ satisfying $0 \leq \tau_a \leq \tau_b \leq T \wedge \tau_K^{R, \varepsilon}$ one has
\[
	\Eb \left[ \sup_{s \in [\tau_a, \tau_b]} X^\varepsilon + \int_{\tau_a}^{\tau_b} Y^\varepsilon \, ds \right] \leq C_0 \left[ X(\tau_a) + \int_{\tau_a}^{\tau_b} R^\varepsilon X^\varepsilon + Z^\varepsilon \, ds \right].
\]
Then for all $\varepsilon \in (0, \varepsilon_0]$ and $K > 0$ we have
\begin{equation}
	\label{eq:gronwall.inequality}
	\Eb \left[ \sup_{s \in \left[0, T \wedge \tau_K^{R, \varepsilon}\right]} X^\varepsilon + \int_0^{T \wedge \tau_K^{R, \varepsilon}} Y^\varepsilon \, ds \right] \leq C_{C_0, T, K} \Eb \left[ X(0) + \int_0^{T \wedge \tau_K^{R, \varepsilon}} Z^{\varepsilon} \, ds \right]
\end{equation}
and if we define the stopping time $\tau_K^{X, \varepsilon}$ for $K > 0$ and $\varepsilon \in (0, \varepsilon_0]$ by
\[
	\tau_K^{X, \varepsilon} = \inf \left\lbrace t \geq 0 \mid \sup_{s \in [0, t]} X^\varepsilon + \int_0^t Y^\varepsilon \, ds \geq K \right\rbrace,
\]
for all $t > 0$
\begin{equation}
	\label{eq:gronwall.convergence}
	\lim_{K \to \infty} \Pb \left( \left\lbrace \tau_K^{X, \varepsilon} \leq t \right\rbrace \right) = 0 \ \text{\emph{uniformly} w.r.t.\ } \varepsilon \in (0, \varepsilon_0].
\end{equation}
\end{proposition}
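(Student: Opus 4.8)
The plan is to reduce the statement to an ordinary (deterministic) Gronwall argument by cutting the random interval $[0, T \wedge \tau_K^{R,\varepsilon}]$ into a \emph{fixed} number $N$ of random subintervals on each of which the integrated weight $\int R^\varepsilon \, ds$ is small. Given $C_0 = C_0(T)$ and $K > 0$, fix $N \in \Nb$ with $N \geq 2 C_0 K$ and put $\kappa = K/N$, so that $C_0 \kappa \leq \tfrac12$. Set $\sigma_0 = 0$ and, for $j = 1, \dots, N$,
\[
	\sigma_j = \inf\left\lbrace s \geq 0 \mid \int_0^s R^\varepsilon \, dr \geq j\kappa \right\rbrace \wedge T .
\]
These are stopping times with $0 = \sigma_0 \leq \sigma_1 \leq \dots \leq \sigma_N = T \wedge \tau_K^{R,\varepsilon}$, and by continuity of $s \mapsto \int_0^s R^\varepsilon \, dr$ one checks that $\int_{\sigma_{j-1}}^{\sigma_j} R^\varepsilon \, dr \leq \kappa$ for each $j$ (with equality unless $\sigma_j = T$).

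First I would prove \eqref{eq:gronwall.inequality}. Write $M_j = \Eb\big[\sup_{s \in [\sigma_{j-1}, \sigma_j]} X^\varepsilon + \int_{\sigma_{j-1}}^{\sigma_j} Y^\varepsilon \, ds\big]$. Applying the hypothesised local estimate on $[\sigma_{j-1}, \sigma_j] \subseteq [0, T \wedge \tau_K^{R,\varepsilon}]$ and bounding $\int_{\sigma_{j-1}}^{\sigma_j} R^\varepsilon X^\varepsilon \, ds \leq \kappa \sup_{[\sigma_{j-1}, \sigma_j]} X^\varepsilon$ gives
\[
	M_j \leq C_0 \Eb X^\varepsilon(\sigma_{j-1}) + C_0 \kappa \, \Eb \sup_{[\sigma_{j-1}, \sigma_j]} X^\varepsilon + C_0 \Eb \int_{\sigma_{j-1}}^{\sigma_j} Z^\varepsilon \, ds .
\]
The integrability hypothesis $\Eb \int_0^{T \wedge \tau_K^{R,\varepsilon}} R^\varepsilon X^\varepsilon + Z^\varepsilon \leq C_{T,K}$, together with an induction on $j$ (the base case being finiteness of $\Eb X^\varepsilon(0)$, which may be assumed, as otherwise \eqref{eq:gronwall.inequality} is vacuous), shows $M_j < \infty$; since $C_0 \kappa \leq \tfrac12$ and $\Eb \sup_{[\sigma_{j-1}, \sigma_j]} X^\varepsilon \leq M_j$, the middle term is absorbed:
\[
	M_j \leq 2 C_0 \Eb X^\varepsilon(\sigma_{j-1}) + 2 C_0 \Eb \int_{\sigma_{j-1}}^{\sigma_j} Z^\varepsilon \, ds .
\]
Because $\Eb X^\varepsilon(\sigma_j) \leq M_j$ this is a linear recursion in $j$; unrolling it from $j = 1$ to $N$ bounds $\sum_{j=1}^N M_j$ — and hence $\Eb\big[\sup_{[0, \sigma_N]} X^\varepsilon + \int_0^{\sigma_N} Y^\varepsilon \, ds\big]$, since $\sup_{[0,\sigma_N]} X^\varepsilon \le \sum_j \sup_{[\sigma_{j-1},\sigma_j]} X^\varepsilon$ — by $C_{C_0,T,K}\,\Eb\big[X^\varepsilon(0) + \int_0^{T \wedge \tau_K^{R,\varepsilon}} Z^\varepsilon \, ds\big]$, with $C_{C_0,T,K}$ of order $N\,(2C_0)^N$. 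Crucially $N$, hence the final constant, depends only on $C_0 = C_0(T)$ and $K$ and not on $\varepsilon$; this is \eqref{eq:gronwall.inequality} with $\sigma_N = T \wedge \tau_K^{R,\varepsilon}$.

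Next I would deduce \eqref{eq:gronwall.convergence}. Fix $t > 0$; we may assume $t \leq T$ (otherwise enlarge $T$, the hypotheses being assumed for every horizon). For any $L > 0$,
\[
	\left\lbrace \tau_K^{X,\varepsilon} \leq t \right\rbrace \subseteq \left\lbrace \tau_L^{R,\varepsilon} \leq t \right\rbrace \cup \left\lbrace \tau_K^{X,\varepsilon} \leq t, \ \tau_L^{R,\varepsilon} > t \right\rbrace .
\]
On the second event $t \wedge \tau_L^{R,\varepsilon} = t \geq \tau_K^{X,\varepsilon}$, so $\sup_{[0, t \wedge \tau_L^{R,\varepsilon}]} X^\varepsilon + \int_0^{t \wedge \tau_L^{R,\varepsilon}} Y^\varepsilon \, ds \geq K$, and Markov's inequality combined with \eqref{eq:gronwall.inequality} and the integrability hypothesis (which gives $\Eb \int_0^{T\wedge\tau_L^{R,\varepsilon}} Z^\varepsilon \leq C_{T,L}$) yields
\[
	\Pb\left( \tau_K^{X,\varepsilon} \leq t, \ \tau_L^{R,\varepsilon} > t \right) \leq \frac{C_{C_0,T,L}}{K}\left( \Eb X^\varepsilon(0) + C_{T,L} \right) =: \frac{D_L}{K} ,
\]
with $D_L$ independent of $\varepsilon$ (using $\sup_\varepsilon \Eb X^\varepsilon(0) < \infty$, automatic in the applications where $X^\varepsilon(0) = 0$). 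Given $\delta > 0$, use \eqref{eq:gronwall.R.convergence} to pick $L$ with $\sup_\varepsilon \Pb(\tau_L^{R,\varepsilon} \leq t) < \delta/2$, then pick $K_0$ with $D_L/K_0 < \delta/2$; for $K \geq K_0$ this gives $\sup_\varepsilon \Pb(\tau_K^{X,\varepsilon} \leq t) < \delta$, proving \eqref{eq:gronwall.convergence}.

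There is no deep obstacle here — the argument is essentially bookkeeping — but three points need care, and these are where I would be most attentive: (i) the partition size $N$ must depend only on the $\varepsilon$-independent datum $C_0$ and on $K$, so that the constant in \eqref{eq:gronwall.inequality} is uniform in $\varepsilon$; (ii) the integrability hypothesis is used precisely to guarantee $M_j < \infty$, which legitimises subtracting $\tfrac12 M_j$ in the absorption step; and (iii) obtaining the \emph{uniform} conclusion \eqref{eq:gronwall.convergence} requires invoking the \emph{uniform}-in-$\varepsilon$ hypothesis \eqref{eq:gronwall.R.convergence}, not merely a pointwise-in-$\varepsilon$ one.
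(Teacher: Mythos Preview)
Your proposal is correct and follows essentially the same route as the paper. The paper does not spell out the proof of \eqref{eq:gronwall.inequality} at all, instead citing it directly as \cite[Lemma~5.3]{glatt-holtz2009}; your partition argument into $N \approx 2C_0 K$ random subintervals with $\int_{\sigma_{j-1}}^{\sigma_j} R^\varepsilon \, dr \leq \kappa$ and absorption of $C_0\kappa \sup X^\varepsilon$ is precisely the content of that cited lemma, so you have simply unpacked the black box. For \eqref{eq:gronwall.convergence} your argument is identical to the paper's: split the event via an auxiliary level $L$ (the paper writes $M$), apply Markov's inequality together with \eqref{eq:gronwall.inequality} on the piece $\{\tau_L^{R,\varepsilon} > t\}$, then choose first $L$ using the uniform hypothesis \eqref{eq:gronwall.R.convergence} and afterwards $K_0$ large.
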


\begin{proof}
The inequality \eqref{eq:gronwall.inequality} is the stochastic Gronwall lemma from \cite[Lemma 5.3]{glatt-holtz2009}. Following the argument from \cite[Theorem 4.2]{glatt-holtz2009} we use the Chebyshev theorem and \eqref{eq:gronwall.inequality} to estimate
\begin{align*}
	\Pb\left( \left\lbrace \tau_K^{X, \varepsilon} \leq t \right\rbrace \right) &\leq \Pb \left(  \left\lbrace \tau_K^{X, \varepsilon} \leq t \right\rbrace \cap \left\lbrace \tau_M^{R, \varepsilon} > t \right\rbrace \right) + \Pb \left( \left\lbrace \tau_M^{R, \varepsilon} \leq t \right\rbrace \right)\\
	&\leq \Pb \left( \left\lbrace \sup_{s \in \left[0, t \wedge \tau_M^{R, \varepsilon} \right]} X^\varepsilon + \int_0^{t \wedge \tau_M^{R, \varepsilon}} Y^\varepsilon \, ds \geq K \right\rbrace \right) + \Pb \left( \left\lbrace \tau_M^{R, \varepsilon} \leq t \right\rbrace \right)\\
	&\leq \frac{1}{K} \Eb\left[ \sup_{s \in \left[0, t \wedge \tau_M^{R, \varepsilon} \right]} X^\varepsilon + \int_0^{t \wedge \tau_M^{R, \varepsilon}} Y^\varepsilon \, ds \right] + \Pb \left( \left\lbrace \tau_M^{R, \varepsilon} \leq t \right\rbrace \right)\\
	&\leq \frac{C_{t, M}}{K} + \Pb \left( \left\lbrace \tau_M^{R, \varepsilon} \leq t \right\rbrace \right). 
\end{align*}
Let $\delta > 0$ be arbitrary. By the uniform convergence \eqref{eq:gronwall.R.convergence} we find $M \in \Nb$ such that for all $\varepsilon \in (0, \varepsilon_0]$ we have
\[
	\Pb \left( \left\lbrace \tau_M^{R, \varepsilon} \leq t \right\rbrace \right) < \frac{\delta}{2}.
\]
Let $K_0 \in \Nb$ be such that for all $K \in \Nb$, $K \geq K_0$, we have $C_{t, M}/K < \delta/2$. Collecting the above we have that for all $\varepsilon \in (0, \varepsilon_0]$ and all $K \in \Nb$, $K \geq 0$
\[
	\Pb\left( \left\lbrace \tau_K^{X, \varepsilon} \leq t \right\rbrace \right) < \delta,
\]
which finishes the proof of the uniform convergence \eqref{eq:gronwall.convergence}.
\end{proof}

\addcontentsline{toc}{section}{References}
\bibliography{bibliography}
\bibliographystyle{plain}
\end{document}